\newcolumntype{P}[1]{>{\centering\arraybackslash}p{#1}}
\providecommand\@dotsep{5}
\def\listtodoname{List of Todos}
\def\listoftodos{\@starttoc{tdo}\listtodoname}
\newtheorem{theorem}{Theorem}[section]
\newtheorem{proposition}[theorem]{Proposition}
\newtheorem{corollary}[theorem]{Corollary}
\newtheorem{lemma}[theorem]{Lemma}
  \theoremstyle{definition}
\newtheorem{definition}[theorem]{Definition}
\newtheorem{remark}[theorem]{Remark}
\newcommand{\dbE}{\mathbb{E}}
\newcommand{\dbH}{{\mathbb H}}
\newcommand{\dbR}{\mathbb{R}}
\newcommand{\dbZ}{\mathbb{Z}}
\newcommand{\calA}{{\mathcal A}}
\newcommand{\calF}{{\mathcal F}}
\newcommand{\calG}{{\mathcal G}}
\newcommand{\calH}{{\mathcal H}}
\newcommand{\calK}{{\mathcal K}}
\newcommand{\vcyc}{V\text{\tiny{\textit{CYC}}}}
\newcommand{\fin}{F\text{\tiny{\textit{IN}}}}
\newcommand{\nbeq}{\begin{equation}}
\newcommand{\neeq}{\end{equation}}
\newcommand{\beq}{\begin{equation*}}
\newcommand{\eeq}{\end{equation*}}
\newcommand{\gldosz}{GL_2(\dbZ)}
\newcommand{\mbr}{\mathbb{R}}
\newcommand{\mbe}{\mathbb{E}}
\newcommand{\mbs}{\mathbb{S}}
\newcommand{\pslt}{\widetilde{\mathrm{PSL}}_2(\mbr)}
\newcommand{\hyp}{\mathbb{H}^2}
\newcommand{\hypp}{\mathbb{H}^3}
\newcommand{\nil}{\mathrm{Nil}}
\newcommand{\sol}{\mathrm{Sol}}
\newcommand{\iso}{\mathrm{Iso}}
\newcommand{\hxr}{\mathbb{H}^2\times\mathbb{E}}
\newcommand{\aut}{\mathrm{Aut}}
\newcommand{\isom}{\mathrm{Isom}}
\DeclareMathOperator{\cyl}{Cyl}
\DeclareMathOperator{\cd}{cd}
\DeclareMathOperator{\Z}{\mathbb{Z}}
\DeclareMathOperator{\E}{\mathbb{E}}
\DeclareMathOperator{\R}{\mathbb{R}}
\DeclareMathOperator{\I}{I}
\DeclareMathOperator{\maH}{\mathbb{H}}
\DeclareMathOperator{\gd}{gd}
\DeclareMathOperator{\Aut}{Aut}
\DeclareMathOperator{\stab}{Stab}
\DeclareMathOperator{\SUB}{SUB}
\DeclareMathOperator{\fix}{Fix}
\DeclareMathOperator{\ef2}{E_{\mathcal{F}_2}}
\begin{document}

\title[Classifying spaces for orientable $3$-manifold groups]{Classifying spaces for the family of virtually abelian subgroups of orientable $3$-manifold groups}

%    Information for first author
\author{Porfirio L. León Álvarez}
\address{Instituto de Matemáticas, Universidad Nacional Autónoma de México. Oaxaca de Juárez, Oaxaca, México 68000}
\email{porfirio.leon@im.unam.mx}
%\address{Department of Mathematics, The Ohio State University, 100 Math Tower,
%231 West 18th Avenue,
%Columbus, OH 43210-1174, USA}
%\email{joecken@gmail.com}
%    Address of record for the research reported here
%\address{Unidad Cuernavaca del Instituto de Matem\'aticas, National University of Mexico, Mexico 62210}
%    Current address
%\curraddr{Department of Mathematics and Statistics,
%Case Western Reserve University, Cleveland, Ohio 43403}
%    \thanks will become a 1st page footnote.
%\thanks{The author was supported by DGAPA-UNAM postdoctoral grant.}

\author{Luis Jorge S\'anchez Salda\~na}
\address{Departamento de Matemáticas,
Facultad de Ciencias, Universidad Nacional Autónoma de México, Circuito Exterior S/N, Cd. Universitaria, Colonia Copilco el Bajo, Delegación Coyoacán, 04510, México D.F., Mexico}
\email{luisjorge@ciencias.unam.mx}

%    General info
\subjclass[2020]{Primary 55R35, 57S25, 20F65,57K31}

\date{}

%\dedicatory{This paper is dedicated to our advisors.}

\keywords{Cohomological dimension, geometric dimension, 3-manifold groups, virtually abelian groups, acyllindrical splittings, classifying spaces, families of subgroups}

\begin{abstract}
For a group $G$, let $\mathcal{F}_{n}$ be the family  of all  the subgroups of  $G$ containing a subgroup    of finite index isomorphic to  $\mathbb{Z}^{r}$ for some $r=0,1,2,\dots ,n$. Joecken, Lafont and Sánchez Saldaña computed the $\calF_1$-geometric dimension of 3-manifold groups. As a natural extension of the aforementioned result, the goal of this  article  is to compute the  $\calF_n$-geometric dimension   of 3-manifold groups  for all $n\ge 2$. 
\end{abstract}

\maketitle

\section{Introduction}
%%%%%%%%%%%%%%%%%%%%%%%%%%%%%%%%%%%%%%%%%%%%%
Given a group $\Gamma$, we say a collection $\calF$ of subgroups of $\Gamma$ is a \emph{family} if it is non-empty, closed under conjugation and under taking subgroups. Fix a group $\Gamma$ and a family $\calF$ of subgroups of $\Gamma$. We say that a $\Gamma$-CW-complex $X$ is a \emph{model for the classifying space} $E_{\calF}\Gamma$ if every isotropy  group of $X$ belongs to the family $\calF$ and the fixed point set $X^H$ is contractible whenever  $H$ belongs to $\calF$. It can be shown that a model for the classifying space   $E_{\calF}\Gamma$ always exists and it is unique up to $\Gamma$-homotopy equivalence. We define the \emph{$\calF$-geometric dimension} of $\Gamma$   as
$$\gd_{\calF}(\Gamma)=\min\{n\in \mathbb{N}| \text{ there is a model for } E_{\calF}\Gamma \text{ of dimension } n \}.$$
Examples of  families of subgroups are: the family that only consists of the trivial subgroup $\{1\}$, and the family $\fin$ of finite subgroups of $\Gamma$. The $\calF$-geometric dimension  has been widely studied in the last decades, and the present paper contributes to this topic.

Let $n\geq0$ be an integer. A group is said to be \emph{virtually $\mathbb{Z}^n$} if it contains a subgroup of finite index isomorphic to $\mathbb{Z}^n$. Define the family \[\calF_n=\{H\leq\Gamma| H \text{ is virtually } \mathbb{Z}^r \text{ for some 
} 0\le r \le n \}.\] The  families $\{1\}$, $\calF_0=\fin$ and $\calF_1=\vcyc$  are relevant due to its connection with the Farrell-Jones and Baum-Connes isomorphism conjectures.  The families $\calF_n$  have been recently studied by
several people, see for instance \cite{Nucinkis:Moreno:Pasini,prytua2018bredon,HP20,SS20}.
 
 In  \cite{Luis:Lafon:Joeken} Joecken, Lafont and Sánchez Saldaña computed the $\calF_1$-geometric dimension of  3-manifold groups, that is fundamental groups of connected, closed, orientable $3$-manifolds. The main goal of the present article is to set up a  natural extension of this result: we explicitly compute the $\calF_k$-geometric dimension of 3-manifold groups  for all $k \geq 2$.  Actually for a $3$-manifold group we have $\calF_3=\calF_k$ for all $k\geq 4$, see \cref{coro:Zn:subgroups}, thus our computations only deal with the families $\calF_2$ and $\calF_3$.
 
Throughout the present article we always consider orientable manifolds. To state our first main theorem we use the prime decomposition of a 3-manifold. Recall that a 3-manifold $P$ is \emph{prime} if $P=P_1\#P_2$ implies $P_1$ or $P_2$ is the 3-sphere. It is well-known that every connected, closed 3-manifold has a unique decomposition as connected sum of prime 3-manifolds, see \cref{thm:prime:decomposition}.
 
\begin{theorem}\label{thm:principal:1}
Let $M$ be a connected, closed, and oriented 3-manifold.
Let $P_1,P_2,\dots,P_r$ be the pieces in the prime decomposition of $M$. Denote  $\Gamma=\pi_1(M)$ and $\Gamma_i=\pi_1(P_i)$. Then, for all $k\geq 2$,  \[
\gd_{\calF_k}(\Gamma)=\begin{cases}
  0 & \text{if $M=\mathbb RP^3\# \mathbb RP^3$,} \\
  2 & \text{if $r\geq 2$,  $\Gamma_i\in \calF_k$ for all $1\leq i \leq r$,} \\ &\text{ and $\Gamma$ is not virtually cyclic,}\\
  \max\{\gd_{\calF_k}(\Gamma_i)|1\le i\le r\} & \text{otherwise.}
\end{cases}\]
Moreover
\begin{itemize}
    \item $\Gamma_i\in \calF_2$ if and only if $P_i$ is modelled on $\mbs^3$ or $\mbs^2\times \dbE$ or $P_i=\mathbb RP^3\# \mathbb RP^3$.
    \item $\Gamma_i\in \calF_3$ if and only if $\Gamma_i\in \calF_2$ or $\Gamma_i$ is modelled on $\dbE^3$. 
\end{itemize}
\end{theorem}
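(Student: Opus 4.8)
The plan is to combine two structural inputs: the prime decomposition, which gives $\Gamma\cong\Gamma_1*\cdots*\Gamma_r$ acting on its Bass--Serre tree $T$ with vertex stabilizers the conjugates of the $\Gamma_i$ and trivial edge stabilizers, and Thurston geometrization, which attaches to each prime piece $P_i$ one of the eight model geometries and thereby a concrete description of $\Gamma_i$. I would first settle the two \emph{moreover} statements, since they organize everything else. A prime $3$-manifold group is virtually $\dbZ^r$ exactly when its geometry forces a virtually abelian fundamental group; running through the geometries, the virtually abelian cases are precisely the finite (spherical $S^3$) groups, the virtually cyclic $S^2\times\dbE$ groups together with $\dbZ/2*\dbZ/2=\pi_1(\mathbb RP^3\#\mathbb RP^3)$, and the virtually $\dbZ^3$ flat $\dbE^3$ groups, whereas each of $\maH^3$, $\maH^2\times\dbE$, $\widetilde{SL}_2$, $\nil$, $\sol$ contains a $\dbZ^2$ but is not virtually abelian. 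Recording that the maximal rank of a $\dbZ^r$ in a $3$-manifold group is $3$, attained only in the flat case, yields $\Gamma_i\in\calF_2$ in the first family of cases and $\Gamma_i\in\calF_3$ in those together with the flat case. In particular $\calF_2$ and $\calF_3$ restrict to the \emph{same} family on every non-flat prime piece, which is what forces the answer to be independent of $k\geq 2$.

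Next I would dispose of the two explicit values. Whenever $\Gamma\in\calF_k$ (so $M=\mathbb RP^3\#\mathbb RP^3$, or a single spherical, $S^2\times\dbE$, or---for $k\geq 3$---flat piece) the space $E_{\calF_k}\Gamma$ is a point and $\gd_{\calF_k}(\Gamma)=0$. For the value $2$ case, $\Gamma$ is a free product of groups in $\calF_k$ that is not virtually cyclic, hence contains a copy of $F_2$; since $F_2$ has no $\dbZ^2$ and $\gd_{\vcyc}(F_2)=2$, monotonicity under subgroups gives $\gd_{\calF_k}(\Gamma)\geq 2$. For the matching upper bound I would run a Lück--Weiermann pushout from the family of subgroups subconjugate to a vertex group---for which $T$ is a $1$-dimensional model, since each elliptic subgroup has a single fixed vertex---up to $\calF_k$. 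The subgroups added are exactly the infinite virtually cyclic \emph{hyperbolic} subgroups, whose commensurators $\NGammaH$ are again virtually cyclic, so each contributes $\gd_{\fin}(\NGammaH)+1=2$, giving $\gd_{\calF_k}(\Gamma)\leq\max\{1,0,2\}=2$.

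The remaining (\emph{otherwise}) case splits into a single-piece computation and a gluing step. For $r=1$ I would compute $\gd_{\calF_2}(\Gamma_1)=\gd_{\calF_3}(\Gamma_1)$ geometry by geometry, taking the $\vcyc$-dimensions of \cite{Luis:Lafon:Joeken} as input and promoting them to $\calF_2$ by a second Lück--Weiermann pushout whose new subgroups are the maximal virtually $\dbZ^2$ subgroups: for $\maH^3$, spherical and $S^2\times\dbE$ there are none, so nothing changes, while for the Seifert ($\maH^2\times\dbE$, $\widetilde{SL}_2$), $\nil$ and $\sol$ geometries I must identify the commensurators $\NGammaH$ of the maximal $\dbZ^2$'s (for $\sol$ and $\nil$ the $\dbZ^2$ is normal, so $\NGammaH=\Gamma$; for the Seifert cases the fibre-plus-base $\dbZ^2$'s have virtually $(\dbZ^2\rtimes\dbZ)$ commensurators), evaluate the relative terms, and match them with lower bounds from Bredon cohomology. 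For $r\geq 2$ with some $\Gamma_j\notin\calF_k$, the free-product gluing yields $\gd_{\calF_k}(\Gamma)=\max_i\gd_{\calF_k}(\Gamma_i)=\max_i\gd_{\calF_2}(\Gamma_i)$: the lower bound is subgroup monotonicity, and for the upper bound the tree term ($1$) and the hyperbolic-commensurator terms ($2$) are dominated by the maximum, which is at least $2$ because an aspherical prime piece is a $PD_3$-group and splits only over $\dbZ^2$'s into pieces outside $\calF_2$, hence has $\gd_{\calF_2}\geq 2$.

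The hard part will be the per-geometry $\calF_2$ computation for the Seifert-fibered, $\nil$ and $\sol$ pieces: pinning down the commensurators of the maximal virtually $\dbZ^2$ subgroups and the exact dimensions of the associated relative classifying spaces, and then producing sharp lower bounds---via non-vanishing of the appropriate Bredon cohomology with a well-chosen coefficient module---that meet the Lück--Weiermann upper bounds. A secondary but delicate point is the bookkeeping in the free-product gluing, ensuring the normalizer $+1$ contributions never exceed $\max_i\gd_{\calF_2}(\Gamma_i)$, together with the degenerate flat single-piece case, where for $k\geq 3$ one has $\Gamma\in\calF_3$ and the classifying space collapses to a point.
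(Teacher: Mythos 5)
Your handling of the first two cases is essentially sound and coincides with the paper's route: the lower bound $\gd_{\calF_k}(\Gamma)\geq 2$ via a non-abelian free subgroup (the paper instead uses the fixed-point argument of \cref{lemma:low:dimensions}), and the upper bound via the Bass--Serre tree of the free-product splitting with the axes of hyperbolic virtually cyclic subgroups coned off (the paper's \cref{building:model:apply:haefliger} and \cref{approximation:dimention:manifold}). The genuine gap lies in the ``otherwise'' case and the ``moreover'' statements: you systematically identify \emph{prime} with \emph{geometric}. A closed, oriented, prime $3$-manifold may have a non-trivial JSJ decomposition (graph and mixed manifolds), and such manifolds admit none of the eight geometries, so your geometry-by-geometry analysis never sees them. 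For exactly these pieces the paper needs acylindricity of the JSJ splitting (\cref{graphgroup:jsj:prime:manifold:acylindrical} together with \cref{acylindrical:stabilizer:geodesic}) to force every virtually $\dbZ^n$ subgroup, $n\geq 2$, into a geometric JSJ piece, whence $\dbZ^3$-subgroups occur only in flat prime pieces (\cref{coro:Zn:subgroups}). That fact is what gives both directions of the ``moreover'' part for non-geometric primes and the $k$-independence $\gd_{\calF_k}(\Gamma_i)=\gd_{\calF_2}(\Gamma_i)$ that your identity $\gd_{\calF_k}(\Gamma)=\max_i\gd_{\calF_2}(\Gamma_i)$ silently uses; your outline has no substitute for it.

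Second, the computational engine you propose for single pieces would fail precisely where it is most needed. For Seifert pieces with hyperbolic base orbifold and for $\sol$-manifolds one has $\gd_{\calF_2}(\Gamma_i)=2$ (\cref{thm:seifer:base:hyperbolic}, \cref{dg:manifold:prime:sol}), whereas $\gd_{\calF_1}(\Gamma_i)\geq\gd_{\calF_1}(\dbZ^2)=3$ by subgroup monotonicity: enlarging the family from $\calF_1$ to $\calF_2$ makes the dimension \emph{drop}. A L\"uck--Weiermann pushout built on top of a model of $E_{\calF_1}\Gamma_i$, with relative terms added for the maximal virtually $\dbZ^2$ subgroups, therefore yields an upper bound of at least $3$, and the ``matching lower bound from Bredon cohomology'' you plan to produce does not exist; the method cannot reach the true value $2$ without a collapsing trick your outline does not contemplate. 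The paper circumvents this by constructing $E_{\calF_2}\Gamma_i$ directly from the Fuchsian quotient $\Gamma_i\to\pi_1^{orb}(B)$ via pulled-back adapted collections (\cref{construction:pushout:pullbak}), not by promoting $E_{\calF_1}\Gamma_i$. Relatedly, your lower-bound argument that an aspherical prime piece ``splits only over $\dbZ^2$'s into pieces outside $\calF_2$, hence has $\gd_{\calF_2}\geq 2$'' is invalid: the $\sol$-group $\dbZ^2\rtimes_A\dbZ$ is a $PD_3$-group that does split as an HNN extension with vertex and edge group $\dbZ^2\in\calF_2$; what fails is the classifying-space property of the resulting tree, not the existence of the splitting, and the correct tool is again \cref{lemma:low:dimensions}. (Two smaller points: closed hyperbolic $3$-manifold groups contain no $\dbZ^2$; and your worry about the flat single-piece case is well-founded, since for $k\geq 3$ one then has $\gd_{\calF_k}(\Gamma)=0\neq 5=\gd_{\calF_2}(\Gamma)$, so that case genuinely requires the separate treatment given in \cref{prop:F3:equal:Fk} rather than the third line of the displayed formula.)
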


In the light of the previous theorem, the task of computing the $\calF_k$-geometric dimension of a 3-manifold group, it is reduced to computing the $\calF_k$-geometric dimension of  fundamental groups of  \emph{prime} 3-manifolds. If $M$ is a prime manifold, then we can cut-off along certain embedded tori in such a way that the resulting connected components are either hyperbolic or Seifert fiber 3-manifolds. This is the famous JSJ-decomposition after the work of Perelman, see \cref{JSJ:decomposition}.

\begin{theorem}\label{geometric:dimension:prime}
Let $M$ be a connected, closed, and oriented prime 3-manifold.
Let $N_1,N_2,\dots,N_r$, be the pieces in the minimal JSJ-decomposition  of $M$. Denote $\Gamma=\pi_1(M)$ and $\Gamma_i=\pi_1(N_i)$. If $k\ge 2$, then
\[\gd_{\calF_k}(\Gamma)=\begin{cases} 
2 & \text{if $M$ is modelled on } \mathrm{Sol},\\
\max\{\gd_{\calF_k}(\Gamma_i)|1\le i\le r\} & \text{otherwise.}
\end{cases}\]
\end{theorem}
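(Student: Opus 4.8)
The plan is to run the entire argument through the JSJ graph-of-groups structure. By the JSJ decomposition, $\Gamma$ acts cocompactly and without inversions on the Bass--Serre tree $T$ whose vertex stabilizers are the conjugates of the $\Gamma_i$ and whose edge stabilizers are the conjugates of the peripheral torus groups, each isomorphic to $\mathbb{Z}^2$. The feature that makes the case $k\ge 2$ tractable, and separates it sharply from $k=1$, is that every edge stabilizer then lies in $\calF_k$, so $\gd_{\calF_k}(\mathbb{Z}^2)=0$. The lower bound is immediate and uniform: restricting any model for $E_{\calF_k}\Gamma$ to a vertex group $\Gamma_i$ yields a model for $E_{\calF_k}\Gamma_i$ (fixed-point sets only improve under restriction), whence $\gd_{\calF_k}(\Gamma_i)\le\gd_{\calF_k}(\Gamma)$ for each $i$, and therefore $\max_i\gd_{\calF_k}(\Gamma_i)\le\gd_{\calF_k}(\Gamma)$ in every case.

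For the upper bound when $M$ is not modelled on $\mathrm{Sol}$, I would first assemble the tree of spaces over $T$, gluing the vertex models $E_{\calF_k}\Gamma_i$ along mapping cylinders of the contractible edge models $E_{\calF_k}(\mathbb{Z}^2)$. This produces a $\Gamma$-CW-complex of dimension $\max\{\max_i\gd_{\calF_k}(\Gamma_i),1\}$, but it is a model only for $E_{\calF'}\Gamma$, where $\calF'\subseteq\calF_k$ is the subfamily of subgroups that fix a point of $T$. The substance of the proof is to promote $\calF'$ to $\calF_k$ by the Lück--Weiermann pushout, which attaches cells indexed by the commensurability classes of those $\calF_k$-subgroups $H$ acting hyperbolically on $T$. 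Such an $H$ preserves the axis of any hyperbolic element it contains, and the kernel of the resulting translation-length homomorphism fixes that axis pointwise, hence lies in the intersection of the edge groups met by the axis; along a maximal hyperbolic substring this intersection is trivial by acylindricity (forcing $H$ virtually cyclic), while along a Seifert substring it can be the common fibre $\mathbb{Z}$ (permitting a genuinely hyperbolic virtually-$\mathbb{Z}^2$ subgroup). In each case one identifies the commensurator $N_\Gamma[H]$ as a sub-graph-of-groups of $\Gamma$ whose vertex groups occur among the $\Gamma_i$, and shows that the cells attached for $H$ have dimension at most $\max_i\gd_{\calF_k}(\Gamma_i)$. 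Since every non-Sol prime manifold with a nontrivial splitting has a piece with nonempty toral boundary, and such a piece is not virtually abelian, this maximum is at least $2$ and absorbs all the correction cells, giving $\gd_{\calF_k}(\Gamma)\le\max_i\gd_{\calF_k}(\Gamma_i)$; the trivial-splitting case is tautological.

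The Sol case is exactly where this scheme breaks and must be treated by hand: the minimal JSJ of a Sol manifold is the single piece $N_1=T^2\times[0,1]$ glued to itself, so $\Gamma_1\cong\mathbb{Z}^2\in\calF_k$ and $\max_i\gd_{\calF_k}(\Gamma_i)=0$, whereas the true value is $2$. Here $\Gamma\cong\mathbb{Z}^2\rtimes_A\mathbb{Z}$ with $A$ Anosov, $T$ is a line on which $\Gamma$ acts through $\Gamma\to\mathbb{Z}$, the fibre $F\cong\mathbb{Z}^2$ is the unique maximal virtually-$\mathbb{Z}^2$ subgroup, and every $\calF_k$-subgroup not contained in $F$ is infinite cyclic and hyperbolic on the line. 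For $\gd_{\calF_k}(\Gamma)\le 2$ I would start from the line (a $1$-dimensional model for the family of subgroups of $F$) and insert the transverse cyclic subgroups by Lück--Weiermann; their commensurators are themselves virtually cyclic, so only cells up to dimension $2$ are attached. For the matching bound $\gd_{\calF_k}(\Gamma)\ge 2$ it suffices to rule out a $1$-dimensional (tree) model, which I would do by exhibiting a nonzero class in the second Bredon cohomology $H^2_{\calF_k}(\Gamma;M)$ for a suitable coefficient module $M$, read off from the Mayer--Vietoris sequence of that same pushout.

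The main obstacle I anticipate is the dimension bookkeeping in the non-Sol upper bound: identifying the commensurators $N_\Gamma[H]$ of the hyperbolic virtually-$\mathbb{Z}^2$ subgroups that live in the graph-manifold (Seifert) part, and checking uniformly that the Lück--Weiermann cells they force never exceed $\max_i\gd_{\calF_k}(\Gamma_i)$. This is delicate precisely because the JSJ action on $T$ fails to be acylindrical at Seifert vertices, as the Seifert fibre fixes arbitrarily long segments, so the clean acylindricity argument only disposes of the hyperbolic substrings, while the Seifert substrings demand a separate, explicit analysis of their fibred subgroups.
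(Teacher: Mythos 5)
Your architecture (lower bound by restriction to vertex groups, upper bound by a tree of spaces over the Bass--Serre tree plus a correction step for subgroups acting hyperbolically) is broadly parallel to the paper's, but there is a genuine gap at exactly the step you yourself flag as the main obstacle, and your anticipated difficulty rests on a false premise. You assert that the JSJ action ``fails to be acylindrical at Seifert vertices'' because the Seifert fibre fixes arbitrarily long segments, and that along Seifert substrings there can be ``genuinely hyperbolic virtually-$\Z^2$ subgroups.'' Neither happens. By minimality of the JSJ decomposition the fibres of adjacent Seifert pieces are \emph{not} identified under the gluing, so inside a common edge group $\Z^2$ the two fibres generate independent cyclic subgroups; consequently the fibre of a piece fixes only the star of its vertex, the pointwise stabilizer of every path of length $3$ is trivial, and the splitting \emph{is} acylindrical --- this is precisely \cref{graphgroup:jsj:prime:manifold:acylindrical}, the paper's key imported input. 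Acylindricity then forces every geodesic stabilizer to be virtually cyclic and every virtually $\Z^n$ subgroup with $n\ge 2$ to fix a vertex (\cref{acylindrical:stabilizer:geodesic}): if such a subgroup preserved an axis, the kernel of its action on that axis would be an infinite pointwise stabilizer of arbitrarily long paths. So the class of subgroups you defer to ``a separate, explicit analysis of their fibred subgroups'' is empty; but since you neither prove it empty nor carry out the deferred analysis, the dimension bookkeeping of your L\"uck--Weiermann step --- the load-bearing part of your upper bound --- is simply not done. Relatedly, your absorption claim (every nontrivially split non-Sol prime manifold has a piece that is not virtually abelian) is asserted without proof; it requires ruling out decompositions consisting only of $T^2\times I$ and twisted $I$-bundles over the Klein bottle, which the paper does by showing such configurations force $M$ to be geometric.

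Your Sol case is also incomplete. Closed Sol manifolds are not all torus bundles: besides mapping tori of Anosov maps (your $\Z^2\rtimes_A\Z$, one JSJ piece $T^2\times I$), there are doubles of the twisted $I$-bundle over the Klein bottle, whose minimal JSJ has \emph{two} pieces and whose fundamental group is the amalgam $\calK\ast_{\Z^2}\calK$. Your line-plus-L\"uck--Weiermann argument does not apply to these as stated (the vertex groups are Klein bottle groups, not $\Z^2$, and the transverse subgroup structure is different); the paper treats them separately in \cref{dg:manifold:prime:sol} via \cref{lemma:case:sol}, passing to an index-two subgroup isomorphic to an Anosov semidirect product. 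Finally, your proposed lower bound in the Sol case (a nonzero class in $H^2_{\calF_k}(\Gamma;M)$ for a suitable coefficient module) is only a sketch and is unnecessarily heavy: since $\Gamma$ is finitely generated and not in $\calF_k$, the bound $\gd_{\calF_k}(\Gamma)\ge 2$ follows immediately from \cref{lemma:low:dimensions}, which is how the paper argues.
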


From the previous theorem, it is clear that our next task is to compute the  $\calF_k$-geometric dimension of the fundamental group of all possible JSJ-pieces, that is, the fundamental groups of hyperbolic and Seifert fibered 3-manifolds. We accomplished this and we summarize our results in \cref{summary:dimention:pieces:jsj:Fk}. In the $\gd_{\calF_2}$ column we reference the theorems where the computations were carried out. The last column is justified in \cref{prop:F3:equal:Fk}.

\begin{table}[H]
\centering
\begin{tabular}{|P{5.5cm}|P{4cm}|P{3.5cm}|}
 \hline
 \textbf{Type of manifold}  &\textbf{$\gd_{\calF_2}(\Gamma)$/proved in} &\textbf{$\gd_{\calF_k}(\Gamma)$ with $k\ge 3$ }\\
 \hline
   Hyperbolic with empty boundary     & 3, \cref{thm:hyperbolic:case} & 3\\
  \hline 
 Hyperbolic with non-empty boundary  & 3, \cref{thm:hyperbolic:case}& 3\\
 \hline
  Seifert fiber with base orbifold  $B$ which is either bad or modelled on $\mbs^2$  &0, \cref{thm:seifer:base:spherical} & 0\\
 \hline
 Seifert fiber with base orbifold $B$  modelled  on $\mathbb{H}^2$, and boundary empty or non-empty  & 2, \cref{thm:seifer:base:hyperbolic} & 2\\
 \hline
  Seifert fiber modelled on $\mathbb{E}^3$ with empty boundary and base orbifold $B$  modelled  on $\mathbb{E}^2$   & 5, \cref{thm:seifer:base:flat} & 0 \\
  \hline
   Seifert fiber modelled on $\nil$ with empty boundary and base orbifold $B$  modelled  on $\mathbb{E}^2$ & 3, \cref{thm:seifer:base:flat} & 3 \\
  \hline
   Seifert fibered  with non-empty and base orbifold  $B$  modelled on $\mathbb{E}^2$ & 0, \cref{thm:seifer:base:flat:bondary} & 0 \\
 \hline
\end{tabular}
\caption{   $\calF_k$-geometric dimension  of the pieces JSJ.}
\label{summary:dimention:pieces:jsj:Fk}
\end{table}

\subsection*{Outline of the paper.} In \cref{sec-background} we set up the preliminaries such as the definition and some properties of the $\calF$-geometric dimension, Bass-Serre theory, 3-manifold theory, and some push-out type constructions for classifying spaces for families. In \cref{section:hyperbolic"}, \cref{section:seifert}, and \cref{section:sol} we compute the $\calF_2$-geometric dimension of hyperbolic, Seifert fiber, and $\sol$-manifolds respectively. In \cref{section:acylindricity} we recall the notion of acylindrical splitting for the fundamental group of a graph of groups, then we prove two results that will be useful to use the computations of the previous sections to prove out our main theorems. We can think of the results in \cref{section:acylindricity} as the tools that help us to glue together the classifying spaces of the building pieces of a 3-manifold (the prime and JSJ pieces). Finally in \cref{section:final} we prove \cref{thm:principal:1} and \cref{geometric:dimension:prime}.

\subsection*{Acknowledgements}Both authors were supported by grant PAPIIT-IA101221. P.L.L.A was supported by a doctoral scholarship of the Mexican Council of Science and Technology (CONACyT). Both authors thank Rita Jiménez Rolland for comments on a draft of the present article. We are grateful to the anonymous referee, his/her comments  helped to improve the readability of the paper.

%%%%%%%%%%%%%%%%%%%%%%%%%%%%%%%%%%
%%%%%%%%%%%%%%%%%%%%%%%%%%%%%%%%%%%%
%%%%%%%%%%%%%%%%%%%%%%%%%%%%%%%%%%%%%%
%%%%%%%%%%%%%%%%%%%%%%%%%%%%%%%%%%%%%%%%
%%%%%%%%%%%%%%%%%%%%%%%%%%%%%%%%%%%%%%%%%%

\section{Preliminaries}\label{sec-background}

\subsection{Geometric and cohomological dimension for families}

Given a group $\Gamma$, we say a collection $\calF$ of subgroups of $\Gamma$ is a \emph{family} if it is non-empty, closed under conjugation and under taking subgroups. Fix a group $\Gamma$ and a family $\calF$ of subgroups of $\Gamma$. We say that a $\Gamma$-CW-complex $X$ is a \emph{model for the classifying space} $E_{\calF}\Gamma$ if every isotropy  group of $X$ belongs to the family $\calF$ and the fixed point set $X^H$ is contractible whenever  $H$ belongs to $\calF$. It can be shown that a model for the classifying space   $E_{\calF}\Gamma$ always exists and it is unique up to $\Gamma$-homotopy equivalence. We define the \emph{$\calF$-geometric dimension} of $\Gamma$   as
$$\gd_{\calF}(\Gamma)=\min\{n\in \mathbb{N}| \text{ there is a model for } E_{\calF}\Gamma \text{ of dimension } n \}.$$
 
 The \emph{orbit category} $\mathcal{O}_{\calF}\Gamma$ is the category whose objects are $G$-homogenous spaces $\Gamma/H$ with $H\in \calF$ and morphisms are $\Gamma$-functions. The \emph{category of Bredon modules}  is the category whose objects are contravariant functors $M\colon \mathcal{O}_{\calF}\Gamma \to Ab$ from the orbit category to the category of  abelian group, and morphisms are natural transformation $f\colon M\to N$. This is an abelian category  with enough proyectives. The constant Bredon module  $\underline{\mathbb{Z}}\colon \mathcal{O}_{\calF}\Gamma \to Ab$ is defined in objects by $\underline{\mathbb{Z}}(\Gamma/H)=\mathbb{Z}$ and in morphisms by $\underline{\mathbb{Z}}(\varphi)=id_{\mathbb{Z}}$. We define the \emph{$\calF$-cohomological dimension} of $\Gamma$ as 
$$\cd_{\calF}(\Gamma)=\min\{n\in \mathbb{N}| \text{ there is a projective resolution of } \underline{\mathbb{Z}} \text{ of dimension } n \}.$$

The proof of the following proposition is implicit in \cite[Proof~of~theorem~3.1]{Lu00} and \cite[Proposition~5.1]{luck:Weiermann}, see also \cite[Theorem~2.3]{MPSS20}. 

\begin{proposition}\label{prop:haefliger}
Let $G$ be a group. Let $\calF$ and $\calG$ be families of subgroups of $G$ such that $\calF\subseteq \calG$. If $X$ is a model for $E_\calG G$, then
\[\gd_{\calF}(G)\leq \max\{\gd_{\calF\cap G_\sigma}(G_{\sigma})+\dim(\sigma)| \ 
\sigma \text{ is a cell of } \ X \}.\] 
\end{proposition}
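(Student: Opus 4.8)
The plan is to construct an explicit model $Y$ for $E_{\calF}G$ by ``blowing up'' each cell of the given model $X$ for $E_{\calG}G$, replacing the orbit of a cell $\sigma$ by the classifying space of its stabilizer for the restricted family; this is the pushout construction of Lück. Throughout, for $H\le G$ I write $\calF\cap H=\{K\le H\mid K\in\calF\}$ for the restricted family, and I call a $G$-map $f$ an \emph{$\calF$-equivalence} if $f^K$ is a homotopy equivalence for every $K\in\calF$. For each cell $\sigma$ of $X$ the stabilizer $G_\sigma$ lies in $\calG$, and I choose a model $Z_\sigma$ for $E_{\calF\cap G_\sigma}(G_\sigma)$ with $\dim Z_\sigma=\gd_{\calF\cap G_\sigma}(G_\sigma)$. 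The elementary local fact I would first record is that for any $H\in\calG$ the projection $G\times_H E_{\calF\cap H}(H)\to G/H$ has source with all isotropy groups in $\calF$ and is an $\calF$-equivalence: for $K\in\calF$ the fixed set $(G\times_H E_{\calF\cap H}(H))^K$ is the disjoint union, over $gH\in(G/H)^K$, of the sets $E_{\calF\cap H}(H)^{g^{-1}Kg}$, each contractible because $g^{-1}Kg\in\calF\cap H$, so the projection is a homotopy equivalence on $K$-fixed points.

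Next I would build $Y$ by induction over the skeleta $X_n$ of $X$, producing $G$-CW-complexes $Y_n$ and $G$-maps $p_n\colon Y_n\to X_n$ such that every isotropy group of $Y_n$ lies in $\calF$, the map $p_n$ is an $\calF$-equivalence, and $\dim Y_n\le\max\{\gd_{\calF\cap G_\sigma}(G_\sigma)+\dim\sigma\mid\dim\sigma\le n\}$. For the base case I replace each $0$-cell orbit $G/G_\sigma$ by $G\times_{G_\sigma}Z_\sigma$, and the local fact gives the three properties. For the inductive step I start from the equivariant pushout that builds $X_n$ from $X_{n-1}$ by attaching the orbits $G/G_\sigma\times D^n$ of $n$-cells along maps $a_\sigma\colon G/G_\sigma\times S^{n-1}\to X_{n-1}$, and I form the parallel pushout in which $G/G_\sigma\times D^n$ is replaced by $G\times_{G_\sigma}(Z_\sigma\times D^n)$ and $G/G_\sigma\times S^{n-1}$ by $G\times_{G_\sigma}(Z_\sigma\times S^{n-1})$. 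The new equivariant cells have dimension $\dim Z_\sigma+n=\gd_{\calF\cap G_\sigma}(G_\sigma)+\dim\sigma$, which accounts for the dimension bound.

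The main obstacle is the step that makes this parallel pushout meaningful: I must replace each attaching map $a_\sigma$ by a $G$-map $\tilde a_\sigma\colon G\times_{G_\sigma}(Z_\sigma\times S^{n-1})\to Y_{n-1}$ lifting it through $p_{n-1}$ up to $G$-homotopy. Here I would use that the domain has isotropy in $\calF$ while $p_{n-1}$ is an $\calF$-equivalence: by the equivariant Whitehead theorem and obstruction theory over the orbit category $\mathcal{O}_{\calF}G$, the relevant obstruction groups vanish, so such a lift exists and is unique up to $G$-homotopy. Forming the pushout with the maps $\tilde a_\sigma$ defines $Y_n$ and an induced $p_n\colon Y_n\to X_n$; since taking $K$-fixed points commutes with these pushouts (the attaching legs are cofibrations) and the three comparison maps to the defining pushout of $X_n$ are $\calF$-equivalences, the gluing lemma shows that $p_n$ is again an $\calF$-equivalence, while the isotropy and dimension bookkeeping are immediate.

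Finally I set $Y=\operatorname{colim}_n Y_n$ with the induced $G$-map $p\colon Y\to X$. Then $Y$ has all isotropy in $\calF$ and $p$ is an $\calF$-equivalence. Because $X$ is a model for $E_{\calG}G$ and $\calF\subseteq\calG$, the fixed set $X^K$ is contractible for every $K\in\calF$, and hence so is $Y^K$; thus $Y$ is a model for $E_{\calF}G$. Reading off $\dim Y\le\max\{\gd_{\calF\cap G_\sigma}(G_\sigma)+\dim\sigma\mid\sigma\text{ a cell of }X\}$ yields the asserted inequality. I expect the only genuinely delicate point to be the equivariant lifting of the attaching maps together with the verification that the $\calF$-equivalence property survives each pushout; the local fixed-point computation and the dimension count are routine.
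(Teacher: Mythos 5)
Your proposal is correct and takes essentially the same route as the paper's own sketch: both carry out L\"uck's blow-up construction, inductively replacing each cell (orbit) $\sigma$ of $X$ by $G\times_{G_\sigma}(Z_\sigma\times\sigma)$ for a minimal-dimensional model $Z_\sigma$ of $E_{\calF\cap G_\sigma}G_\sigma$ and reattaching, which yields exactly the stated dimension bound. The only difference is one of detail: you justify the lifting of the attaching maps through the $\calF$-equivalence $p_{n-1}$ via the equivariant Whitehead theorem and obstruction theory over the orbit category, a step the paper's sketch glosses over with the phrase that the attaching map ``induces a map'' $\hat\varphi_\sigma$.
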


    \begin{lemma}\label{lemma:low:dimensions}
Let $n\geq 1$ and let $G$ be a finitely generated group. Assume that for all $1\leq r\leq n$, $G$  is not virtually $\dbZ^r$. Then $\gd_{\calF_n}(G)\geq 2$.
\end{lemma}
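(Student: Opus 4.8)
The plan is to argue by contraposition on the dimension: I would assume $\gd_{\calF_n}(G)\le 1$ and deduce that $G$ itself lies in $\calF_n$, which is essentially what the hypothesis forbids. A model for $E_{\calF_n}G$ of dimension $\le 1$ is a contractible $G$-CW-complex $T$ of dimension $\le 1$ (the $0$-dimensional case being subsumed), hence a tree on which $G$ acts cellularly; after one barycentric subdivision I may assume the action is without inversions. The feature of $T$ that I will exploit is the defining property of the classifying space: $T^{H}$ is contractible, and in particular nonempty, for every $H\in\calF_n$, not merely that the isotropy groups lie in $\calF_n$.

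The first key step is to show that every element of $G$ acts elliptically on $T$, i.e.\ fixes a point. Given $g\in G$, the cyclic group $\langle g\rangle$ is either finite (virtually $\dbZ^0$) or infinite cyclic (virtually $\dbZ^1$); since $n\ge 1$, in both cases $\langle g\rangle\in\calF_n$. Hence $T^{\langle g\rangle}\ne\emptyset$, so $g$ fixes a point of $T$. This is precisely where the hypothesis $n\ge 1$ and the full strength of ``model for $E_{\calF_n}G$'' (rather than merely ``isotropy in $\calF_n$'') enter; note it applies uniformly to torsion and infinite-order elements.

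The second step is to pass from ``every element elliptic'' to ``global fixed point.'' Here I would invoke Serre's lemma: a finitely generated group acting on a tree, all of whose elements are elliptic, fixes a point of the tree. Granting this, $G$ fixes a vertex $v$, so $G=G_v$ is contained in an isotropy group and therefore $G\in\calF_n$; equivalently, $G$ is virtually $\dbZ^r$ for some $0\le r\le n$. Since by assumption $G$ is not virtually $\dbZ^r$ for any $1\le r\le n$, the only surviving possibility is $r=0$, i.e.\ $G$ is finite. As the lemma is applied to infinite groups (a finite group has $\gd_{\calF_n}=0$, so the statement is understood for infinite $G$), this is the desired contradiction, and thus $\gd_{\calF_n}(G)\ge 2$.

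I expect the main obstacle to be the careful justification of Serre's lemma in this form, whose delicate case is that a finitely generated all-elliptic group could a priori fix an \emph{end} of $T$ without fixing a point. I would rule this out using finite generation directly: each generator $g_i$, being elliptic and fixing the end $\xi$, fixes pointwise a ray $[p_i,\xi)$ (an isometry of a ray fixing its initial vertex is the identity on the ray), and finitely many rays converging to the same end share a common sub-ray; that sub-ray is then fixed by all generators, hence by $G$, yielding the required global fixed point. The remaining verifications — that a contractible $1$-complex is a tree and that $\langle g\rangle\in\calF_n$ for all $g$ — are routine.
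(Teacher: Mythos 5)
Your proof is correct and follows essentially the same route as the paper's: rule out dimension $0$ because the hypothesis forces $G\notin\calF_n$, and in dimension $1$ observe that every element of $G$ is elliptic on the tree (since $\langle g\rangle\in\calF_n$ gives $T^{\langle g\rangle}\neq\emptyset$ by the classifying-space property), then invoke Serre's fixed-point theorem for finitely generated groups to produce a global fixed point, contradicting $G\notin\calF_n$. Your explicit treatment of the finite-group edge case is in fact slightly more careful than the paper, which tacitly reads the hypothesis as ``$G\notin\calF_n$''.
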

\begin{proof}
It is easy to see that $\gd_{\calF_n}(G)=0$ if and only if $G\in \calF_n$. Thus by hypothesis $\gd_{\calF_n}(G)\neq 0$. Now assume $\gd_{\calF_n}(G)=1$, then $G$ acts on a tree with stabilizers in $\calF_n$. In particular every element $g\in G$ has a fixed point. Since $G$ is finitely generated, by \cite[Corollary 3, p.65]{Serre:trees}, $G$ has a global fixed point in $X$, but this contradicts our hypothesis that $G\notin \calF_n$.
\end{proof}
\subsection{Bass-Serre theory}
In this section we remember some notation and results of the Bass-Serre theory that we will use later.

A \emph{graph} $Y$(in the sense of Serre) consists of a set of vertices $V(Y)$, a set of edges $E(Y)$ and two functions $E(Y)\to V(Y)\times V(Y)$, $y\mapsto(o(Y),t(Y))$ and $E(Y)\to E(Y)$, $y\mapsto \overline{y}$ satisfying $\Bar{\Bar{y}}=y$ and $o(y)=t(\overline{y})$ for all $y\in E(Y)$. The \emph{geometric realization} of $Y$ is the quotient space, $V(Y) \bigsqcup(E(Y)\times I)/\sim$, where $V(Y)$ and $E(Y)$ have the discrete topology, and the equivalence relation in  $V(Y) \bigsqcup(E(Y)\times I)$ is given as follows: for every $y\in E(Y)$ and $t\in I$, $(y,t)\sim (\bar{y},1-t)$ , $(y,0)\sim o(y)$ and $(y,1)\sim t(y)$.

A \emph{graph of groups} $\mathbf{Y}$ consists of  a graph $Y$, a group $Y_P$ for each $P\in V(Y)$, and a group $Y_y$ for each $y\in E(Y)$, together with  monomorphisms  $\varphi_{y}\colon Y_y \to Y_{t(y)}$. One requires in addition $Y_{\Bar{y}}=Y_y$.
Suppose that the group $G$ acts without inversions on the graph $Y$, i.e. for every $g\in G$ and $y\in E(Y)$, we have $gy\neq \bar{y}$. Then we have an induced graph of groups with underling graph $Y/G$ by associating to each vertex (resp. edge) the isotropy group of a preimage under the quotient map $Y\to Y/G$.

Given a graph of groups $\mathbf{Y}$, one the classic theorems of Bass-Serre theory provides the existence of a group $G=\pi_1(\mathbf{Y})$, called the \emph{fundamental group of the graph of groups} $\mathbf{Y}$ and the tree $T$(a graph with no cycles), called the \emph{Bass-Serre tree} of $\mathbf{Y}$, such that $G$ acts on $T$ without inversions, and the induced graph of groups is isomorphic to $\mathbf{Y}$. The identification $G=\pi_1(\mathbf{Y})$ is called a splitting of $G$.

As a direct consequence of \cite[Lemma 1.1]{Dunwoody:Sageev} we get the following result which will be useful later on.
\begin{proposition}\label{finitely:generated:group:fix:edge:or:act:path}
Let $H$ be a group virtually $\Z^n$ acting  on a tree $T$. Then exactly one of the  following happens:
\begin{enumerate}[a)]
    \item $H$ fixes  a vertex of $T$.
    \item $H$ acts  co-compactly in a unique geodesic line $\gamma$ of $T$.  
\end{enumerate}
\end{proposition}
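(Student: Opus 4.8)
The plan is to reduce everything to the structural dichotomy provided by \cite[Lemma 1.1]{Dunwoody:Sageev}, which classifies the action of a finitely generated group on a tree according to whether it has a bounded orbit. First I would observe that a virtually $\dbZ^n$ group $H$ is finitely generated, so this lemma applies directly. The general theory of group actions on trees splits into two cases: either $H$ stabilizes a nonempty bounded subtree (equivalently, $H$ has a bounded orbit, and by the standard fixed-point argument for finite-diameter subtrees this yields a fixed vertex or a fixed edge midpoint; since $H$ acts without inversions one gets a genuine fixed vertex), or $H$ contains a hyperbolic element and therefore preserves a canonical invariant subtree built from the axes of its hyperbolic elements. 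Thus the real content is to show that in the unbounded case the invariant subtree is exactly a single geodesic line on which $H$ acts cocompactly, and to prove the two alternatives are mutually exclusive.

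The key steps, in order, are as follows. First, dispose of case (a): if $H$ has a global fixed point we are done, so assume not. Second, invoke the dichotomy from \cite{Dunwoody:Sageev} to produce a hyperbolic element $h \in H$ with axis $\gamma_h$. Third — and this is the heart of the argument — I would use the fact that $H$ is \emph{virtually $\dbZ^n$} rather than an arbitrary group to pin down the minimal invariant subtree. Let $A \cong \dbZ^n$ be a finite-index subgroup of $H$. The translation-length function $\ell\colon A \to \dbZ_{\geq 0}$ is a homomorphism-like length function, and the subset of elements acting hyperbolically forms, together with the elliptic ones, a subgroup; since $\dbZ^n$ is abelian, any two hyperbolic elements must share a common axis (two commuting hyperbolic tree isometries have the same axis). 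Hence all hyperbolic elements of $A$ translate along a single line $\gamma$, and the elliptic elements of $A$ must fix $\gamma$ pointwise because they commute with a hyperbolic element whose axis is $\gamma$. Therefore $A$ preserves $\gamma$ and acts on it by translations through an infinite cyclic quotient, giving a cocompact action on $\gamma$. Fourth, promote this from $A$ to $H$: since $[H:A]<\infty$, the $H$-orbit of $\gamma$ is finite, and as $H$ already acts (via $A$) cocompactly on one line, a short argument shows $H$ itself preserves $\gamma$ and acts cocompactly — this uses that a finite-index overgroup of a cocompact lattice in $\isom(\gamma)=\isom(\dbR)$ is again cocompact. Finally, I would verify uniqueness of $\gamma$ (it is the unique $\ell$-minimal invariant line, equivalently the union of axes of hyperbolic elements) and that (a) and (b) cannot both occur, since a hyperbolic element has no fixed point.

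The main obstacle I anticipate is the third step: controlling how the elliptic part of $A \cong \dbZ^n$ interacts with the hyperbolic part. One must rule out the possibility that the commuting elliptic elements fix large subtrees transverse to $\gamma$, which would force $H$ to preserve a strictly larger tree than a single line and destroy cocompactness on a \emph{line}. The clean way to handle this is to note that an elliptic isometry commuting with a hyperbolic isometry of axis $\gamma$ must fix $\gamma$ setwise, hence pointwise (it cannot translate along $\gamma$ while being elliptic), and then that any vertex off $\gamma$ fixed by such an element, combined with the $\dbZ^n$-structure, would produce a fixed point for a hyperbolic generator — a contradiction. Making this transverse-collapse argument precise, while keeping the invocation of \cite[Lemma 1.1]{Dunwoody:Sageev} doing most of the heavy lifting, is where the care is needed; the rest is routine book-keeping with translation lengths and finite index.
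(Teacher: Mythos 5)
Your proposal is correct in substance, but it takes a genuinely different route from the paper's. The paper gives no argument at all: it states the proposition as a direct consequence of \cite[Lemma~1.1]{Dunwoody:Sageev}, which already asserts, for any slender group acting on a tree (and virtually $\mathbb{Z}^n$ groups are slender), precisely the dichotomy ``fixed point or cocompact action on an invariant line.'' You instead use that lemma only for the weak dichotomy (global fixed point versus existence of a hyperbolic element) --- which is really just Serre's fixed-point theorem for finitely generated groups, cited elsewhere in the paper as \cite[Corollary~3, p.65]{Serre:trees} --- and then reprove the hard half by hand, exploiting the virtually abelian structure: commuting hyperbolic isometries of a tree share an axis, an elliptic element commuting with a hyperbolic one must fix its axis pointwise (it cannot act as a reflection, since conjugation would then invert the translation), so a finite-index $A\cong\mathbb{Z}^n$ preserves a single line $\gamma$ and acts on it cocompactly through an infinite cyclic group of translations, and the action is then promoted to $H$. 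What the paper's citation buys is brevity; what your argument buys is a self-contained, elementary proof whose only external input is Serre's theorem.

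Two loose ends to tighten, both minor. First, the hyperbolic element your dichotomy produces lies in $H$, not necessarily in $A$; pass to a positive power (powers of hyperbolic isometries are hyperbolic with the same axis) to place one inside $A$ before running the commutation argument. Second, the promotion step is cleaner without the ``finite orbit plus cocompact lattice'' sketch: for $h\in H$, the line $h\gamma$ is the axis of $hah^{-1}$ (where $a\in A$ is hyperbolic), some positive power of $hah^{-1}$ lies in $A$ and is hyperbolic with axis $h\gamma$, and since all hyperbolic elements of $A$ have axis $\gamma$ this forces $h\gamma=\gamma$; cocompactness of $H$ on $\gamma$ is then inherited from the subgroup $A$. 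The same power trick yields uniqueness: any line on which $H$ acts cocompactly is the axis of a hyperbolic element of $H$, hence of an element of $A$, hence equals $\gamma$. Finally, your anticipated ``main obstacle'' --- elliptic elements of $A$ fixing subtrees transverse to $\gamma$ --- is a non-issue: alternative (b) asserts cocompactness of the action restricted to $\gamma$, not that $\gamma$ is the minimal or unique invariant subtree, so larger invariant subtrees do no harm to the statement.
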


\subsection{3-manifolds and their decompositions} In this section we revisit some results of 3-manifolds that we use later. For more details see \cite{Sc83} and \cite{matthias:stefan:henry}.

A Seifert fibered space is a 3-manifold $M$ with a decomposition of $M$ into a disjoint union of circles, called fibres, such that each circle has a tubular neighbourhood in $M$ that is isomorphic to a fibered solid torus or Klein bottle.  If we collapse each  of these circles we obtain a surface $B$ that has a natural orbifold structure, we call $B$ the \emph{base orbifold} of $M$. Such an orbifold $B$ has its \emph{orbifold fundamental group} $\pi_1^{orb}(B)$, which is not necessarily the fundamental group of the underlying topological space, but it is related to the fundamental group of $M$ via the following lemma.

\begin{lemma}\cite[lemma 3.2]{Sc83}\label{sequence:exact:groups:fundamental:seifert}
Let $M$ be a Seifert fiber space with base orbifold $B$. Let $\Gamma$ be the fundamental group of $M$. Then there is an exact sequence 
 $$1\to K\to \Gamma\to \pi_1^{orb}(B)\to 1$$
where $K$ denotes the cyclic subgroup of $\Gamma$ generated by a regular fiber.
The group $K$ is infinite except in cases where $M$ is covered by $\mbs^3$.  \end{lemma}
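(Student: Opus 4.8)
The plan is to exploit the fact that the Seifert fibration $p\colon M\to B$ is an orbifold circle bundle and to assemble $\Gamma=\pi_1(M)$ from the fibration data by van Kampen's theorem, tracking the class $h$ of a regular fiber at every stage. First I would isolate the regular part of the base: let $x_1,\dots,x_k$ be the cone points of $B$, delete small disk neighbourhoods to obtain a compact surface $B_0$ with nonempty boundary, and set $M_0=p^{-1}(B_0)$. Over $B_0$ the map $p$ is a genuine $S^1$-bundle, and since $B_0$ has nonempty boundary the bundle is trivial; thus $M_0\cong B_0\times S^1$ and one gets a split exact sequence
$$1\to \langle h\rangle \to \pi_1(M_0)\to \pi_1(B_0)\to 1,$$
in which $h$ is infinite cyclic and central.

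Next I would glue the singular fibers back in. Each lies in a fibered solid torus (or solid Klein bottle) $V_i$ over a disk around $x_i$, with $\pi_1(V_i)\cong\dbZ$ generated by the core circle; gluing $V_i$ to $M_0$ along the boundary torus $\partial V_i$ introduces, via van Kampen, the Seifert relation $q_i^{\alpha_i}h^{\beta_i}=1$, where $q_i$ is the boundary loop encircling $x_i$ and $(\alpha_i,\beta_i)$ are the Seifert invariants. Performing these gluings one at a time yields a presentation of $\Gamma$ in which $h$ remains normal (central when the base is orientable) and generates the subgroup $K$.

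To identify the quotient, I would compare presentations. The orbifold group $\pi_1^{orb}(B)$ is obtained from $\pi_1(B_0)$ by adjoining the cone relations $q_i^{\alpha_i}=1$. Since a fiber projects to a point, $p_*(h)=1$, so $K\subseteq\ker p_*$; conversely, setting $h=1$ in the presentation of $\Gamma$ converts each Seifert relation $q_i^{\alpha_i}h^{\beta_i}=1$ into precisely the cone relation $q_i^{\alpha_i}=1$, which shows $\Gamma/K\cong\pi_1^{orb}(B)$ and hence $K=\ker p_*$. This gives the claimed exact sequence. Finally, for the order of $K$: if $M$ is not covered by $S^3$ then $\Gamma$ is infinite and the Euler/Seifert data never force a nontrivial power of $h$ to vanish, so $K\cong\dbZ$; in the spherically covered case $\Gamma$ is finite and $h$ necessarily has finite order.

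The main obstacle is the bookkeeping of the fiber class through the successive van Kampen gluings---particularly over a non-orientable base, where $h$ is merely normal rather than central---and the verification that no proper power of $h$ becomes trivial outside the spherically covered case. The latter is the one genuinely geometric input: it is not forced by the combinatorics of the presentation alone but reflects the behaviour governed by the rational Euler number of the fibration, which is precisely why the exceptional case is exactly that of $M$ covered by $S^3$.
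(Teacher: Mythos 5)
First, note that the paper offers no proof of this statement to compare against: the lemma is quoted verbatim from Scott \cite{Sc83}, so your proposal can only be judged on its own merits. Your van Kampen outline is indeed the standard route to the exact sequence, and the presentation bookkeeping (gluing each fibered solid torus kills its meridian, adding the relation $q_i^{\alpha_i}h^{\beta_i}=1$; killing $h$ then recovers the cone relations $q_i^{\alpha_i}=1$, and $\langle h\rangle$ equals its normal closure because conjugation sends $h\mapsto h^{\pm 1}$) is essentially sound. But one step is false as written: the claim that the bundle over $B_0$ is trivial ``because $B_0$ has nonempty boundary'' only holds for bundles with oriented fibers. If the base orbifold is non-orientable and $M$ is orientable, the fiber monodromy must reverse orientation along orientation-reversing loops, so $M_0$ is a twisted bundle (the orientable circle bundle over the M\"obius band is not a product), $h$ is not central, and the sequence for $\pi_1(M_0)$ is only a semidirect product with $ghg^{-1}=h^{\pm1}$. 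You concede this case in your final paragraph, which contradicts the triviality claim; the argument is repairable, but not as stated. Similarly, if $B$ is closed with no cone points then $B_0=B$ has empty boundary and the bundle is certainly not trivial (Hopf fibration); there one must instead invoke the homotopy exact sequence of the $S^1$-bundle. A fully general proof would also have to treat the fibered solid Klein bottle neighbourhoods (reflector curves in $B$), which you mention parenthetically but never incorporate into the presentation.

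The genuine gap is the last sentence of the lemma. You assert that outside the spherically covered case ``the Euler/Seifert data never force a nontrivial power of $h$ to vanish,'' and then explicitly concede that this is ``the one genuinely geometric input'' not forced by the combinatorics. That concession is accurate, and it means the lemma is not proved: the infiniteness of $K$ is part of the statement, and it is the part this paper actually leans on (for instance in the proof of \cref{thm:seifer:base:hyperbolic}, where $K$ must be infinite cyclic). The missing argument runs roughly as follows: if $h$ had finite order in $\Gamma$, the Seifert fibration would lift to a fibration of the universal cover $\widetilde M$ by \emph{circles}; the induced exact sequence for $\widetilde M$ forces its base orbifold to be simply connected as an orbifold, and it cannot be $\mathbb{R}^2$ without cone points (else $\widetilde M\cong \mathbb{R}^2\times S^1$ is not simply connected), so the base, and hence $\widetilde M$, is compact; a compact simply connected Seifert fibered space is $S^3$, so $M$ is covered by $S^3$. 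Conversely, if $M$ is covered by $S^3$ then $\Gamma$ is finite and $K$ is finite. Without this (or an equivalent Euler-number argument), your proposal establishes only the exact sequence, not the full lemma.
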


A 2-orbifold $B$ is of exactly one of the following types depending on the structure of its universal orbifold covering: bad, spheric, hyperbolic, or flat. In this paper we will divide the computation of the $\calF_k$-geometric dimension of a Seifert manifold $M$ depending on the type of its base orbifold $B$.

The following is a well-known theorem of Kneser (existence) \cite{Kneser1929} and Milnor (uniqueness)\cite{MR142125}, see \cite[Theorem~2.1.2]{John:W:Morgan}.

\begin{theorem}[Prime decomposition]\label{thm:prime:decomposition}
Let $M$ be a closed, oriented 3-manifold. Then $P_1\#\cdots \#P_n$ where each $P_i$ es prime. Furthermore, this decomposition is unique up to order and homeomorphism. 
\end{theorem}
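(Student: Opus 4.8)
The plan is to establish existence following Kneser and uniqueness following Milnor, the two classical ingredients named in the statement. For existence I would produce a connected-sum decomposition by repeatedly splitting $M$ along essential $2$-spheres and then invoke a finiteness argument to guarantee that the process terminates; for uniqueness I would compare two such decompositions by putting the corresponding sphere systems in general position and reducing their intersection by innermost-disk surgeries.

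For existence, recall first that a connected-sum decomposition $M = P_1 \# \cdots \# P_n$ is encoded by a system $S$ of disjoint, embedded, pairwise non-parallel $2$-spheres in $M$, none of which bounds an embedded $3$-ball: cutting $M$ along $S$ and capping each resulting boundary sphere with a $3$-ball yields the punctured prime pieces. The crucial point is Kneser's finiteness theorem, namely that there is an upper bound, depending only on a fixed triangulation of $M$, on the number of spheres in any such system. I would fix a triangulation with $t$ tetrahedra and isotope the spheres of $S$ into normal form, so that each sphere meets every tetrahedron in a disjoint union of normal triangles and quadrilaterals (seven combinatorial types in all). A pigeonhole argument then shows that if $|S|$ exceeds a bound of the shape $c\cdot t$, two of the normal spheres must cobound a product region $S^2 \times I$ containing no other sphere of $S$, and hence are parallel, contradicting the choice of $S$. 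This bounds $|S|$, so a maximal system exists and the splitting terminates, giving the decomposition into primes (the prime condition being exactly that the corresponding piece admits no further essential sphere).

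For uniqueness, suppose $M = P_1 \# \cdots \# P_m = Q_1 \# \cdots \# Q_n$ arise from sphere systems $S$ and $S'$. After an isotopy I would make $S$ and $S'$ transverse, so that $S \cap S'$ is a disjoint union of circles, and then minimize the number of such circles. If any intersection circles remain, an innermost one on some component of $S'$ bounds a disk $D$ whose interior is disjoint from $S$; using $D$ to surger the relevant sphere of $S$, and discarding the trivial spheres that bound balls, strictly reduces $|S \cap S'|$ without changing the homeomorphism types of the pieces, contradicting minimality. Hence $S$ and $S'$ may be taken disjoint, and a direct combinatorial comparison of the components of $M \setminus (S \cup S')$ matches the factors $P_i$ with the $Q_j$ up to reordering and homeomorphism.

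The main obstacle I anticipate is the finiteness step in existence: controlling the normal surfaces and proving that an overlarge system must contain a parallel pair requires the full normal-surface bookkeeping, and one must separately treat the $S^2 \times S^1$ summands, whose defining spheres are non-separating and therefore need care both in the counting argument and in the innermost-disk surgeries of the uniqueness proof. Since this is the classical Kneser--Milnor theorem, in practice I would cite the standard references rather than reproduce the normal-surface estimates in full.
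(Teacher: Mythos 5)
The paper gives no proof of this statement at all --- it is quoted as the classical Kneser--Milnor theorem, with existence attributed to Kneser and uniqueness to Milnor, and used as background --- so your sketch is exactly the standard classical argument (Kneser's normal-surface finiteness bound for existence, innermost-disk surgery on transverse sphere systems for uniqueness, with the non-separating $S^2\times S^1$ spheres treated separately) that the paper's citation points to. Your outline is sound, correctly flags the two genuinely delicate points (the pigeonhole bound forcing parallel normal spheres, and the claim that surgering a sphere system does not change the resulting prime pieces), and your conclusion that in practice one cites the standard references is precisely what the paper itself does.
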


Another well-known result we will need is the Jaco-Shalen \cite{MR539411} and Johanson \cite{MR551744} decomposition, after Perelman's work, see \cite[Theorem~1.7.6]{matthias:stefan:henry}.
\begin{theorem}[JSJ-decomposition]\label{JSJ:decomposition}
For a closed, prime, oriented 3-manifold $M$ there is a collection $T\subseteq M$ of disjoint incompressible tori, i.e. two sided property embedded and $\pi_1$-injective, such that each component of $M-T$ is either a hyperbolic or a Seifert fibered manifold. A minimal such collection $T$ is unique up to isotopy.   
\end{theorem}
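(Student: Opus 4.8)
The plan is to obtain the torus system $T$ as the frontier of the \emph{characteristic submanifold} of $M$ and then to identify the atoroidal complementary pieces as hyperbolic via geometrization. Since $M$ is prime and oriented, it is irreducible except in the single case $S^2\times S^1$, which is Seifert fibered and is handled directly by taking $T=\emptyset$. Working inside an irreducible $M$, the first step is to invoke Jaco--Shalen--Johannson theory: there is a submanifold $\Sigma\subseteq M$, unique up to isotopy, that is maximal among Seifert-fibered submanifolds whose frontier consists of essentially embedded incompressible annuli and tori. Because $M$ is closed, there are no boundary annuli to track, so the relevant frontier components are tori; I would let $T$ be this collection after discarding the boundary-parallel and compressible ones.

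Next I would verify that $T$ has the asserted properties. Incompressibility and $\pi_1$-injectivity are inherited from the fact that the frontier of the characteristic submanifold is incompressible, and two-sidedness is automatic for separating tori in an oriented manifold. The complementary pieces of $M\setminus T$ are, by the defining maximality of $\Sigma$, either Seifert fibered (the components of $\Sigma$) or atoroidal: a component lying outside $\Sigma$ can contain no essential torus, since such a torus could be absorbed to enlarge the Seifert part, contradicting maximality. Minimality of $T$ and its uniqueness up to isotopy then follow from the uniqueness up to isotopy of the characteristic submanifold, which is the technical core of the classical theory (relying on Haken's hierarchy theory and Waldhausen-type rigidity for sufficiently large $3$-manifolds).

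The final and deepest step is to upgrade \emph{atoroidal} to \emph{hyperbolic}. Here I would appeal to Thurston's Geometrization Conjecture, proved by Perelman via Ricci flow with surgery: a compact, orientable, irreducible, atoroidal $3$-manifold whose boundary (if nonempty) consists of tori, and whose fundamental group is infinite and not virtually abelian, carries a complete finite-volume hyperbolic structure on its interior. One must set aside the finitely many exceptional atoroidal pieces with finite or virtually cyclic fundamental group --- the small Seifert fibered spaces such as lens spaces and the manifolds modelled on $S^3$ or $S^2\times\dbE$ --- since these are themselves Seifert fibered and so are placed on the Seifert side of the dichotomy.

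The main obstacle is genuinely this last step: geometrization is a result of enormous depth that the present article can only invoke, and even the topological portion rests on the nontrivial uniqueness-up-to-isotopy of the characteristic submanifold. For the purposes of this paper the statement functions as a black box, so rather than reprove it I would present it with references to \cite{matthias:stefan:henry} and the original Jaco--Shalen and Johannson monographs, emphasizing only the output needed downstream, namely the splitting of $\pi_1(M)$ as the fundamental group of a graph of groups whose vertex groups are the $\pi_1$ of hyperbolic or Seifert pieces and whose edge groups are isomorphic to $\dbZ^2$.
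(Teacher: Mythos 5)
The paper itself offers no proof of this statement: it is quoted as classical background (Kneser--Milnor for the prime decomposition, Jaco--Shalen--Johannson plus Perelman for this one) and used as a black box downstream. So your closing recommendation --- cite the result and record only the induced graph-of-groups splitting of $\pi_1(M)$ with $\mathbb{Z}^2$ edge groups --- is exactly what the paper does, and your outline via the characteristic submanifold is the standard route one would follow to prove it.

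There is, however, a genuine gap in the geometrization step as you state it. The hyperbolization statement ``compact, orientable, irreducible, atoroidal, with toroidal boundary and $\pi_1$ infinite and not virtually abelian $\Rightarrow$ finite-volume hyperbolic'' is correct when the boundary is \emph{non-empty} (such pieces are Haken, this is Thurston's theorem, and the exceptional atoroidal pieces --- the solid torus, $T^2\times I$, and the twisted $I$-bundle over the Klein bottle --- indeed all have virtually cyclic or virtually abelian fundamental group). But it is \emph{false} for closed manifolds, which is precisely the case $T=\emptyset$ of the theorem: a small Seifert fibered space with base $S^2$ and three exceptional fibers, e.g.\ the Brieskorn sphere $\Sigma(2,3,7)$ modeled on $\widetilde{\mathrm{PSL}}_2(\mathbb{R})$, or closed $\mathrm{Nil}$- and $\mathbb{H}^2\times\mathbb{E}$-manifolds over hyperbolic triangle orbifolds, contains no embedded essential torus, has infinite fundamental group that is not virtually abelian (it does, however, contain $\mathbb{Z}^2$, coming from the center), and is not hyperbolic. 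So your list of exceptions to set aside (``finite or virtually cyclic fundamental group'') is incomplete, and your argument would wrongly certify these manifolds as hyperbolic. The standard repair is to invoke geometrization for closed manifolds in the dichotomous form ``closed, orientable, irreducible, geometrically atoroidal $\Rightarrow$ hyperbolic \emph{or} Seifert fibered,'' or equivalently to replace geometric atoroidality by the algebraic condition that $\pi_1$ contains no $\mathbb{Z}^2$ subgroup, under which ``infinite $\pi_1$, no $\mathbb{Z}^2$ $\Rightarrow$ hyperbolic'' is the correct Perelman statement. Since the offending examples are themselves Seifert fibered, the conclusion of the theorem is untouched; only your derivation needs this correction. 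A smaller point: the frontier of the characteristic submanifold can contain \emph{parallel} copies of a torus (for instance for Sol torus bundles, where the minimal family is a single fiber torus but the frontier consists of two parallel copies), so passing to a minimal family requires discarding redundant parallel tori; compressible or boundary-parallel frontier components do not actually occur.
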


It is a consequence of the uniformization theorem that compact surfaces
(2-manifolds) admit Riemannian metrics with constant curvature; that is, compact
surfaces admit geometric structures modelled on $\mbs^2$, $\mbe^2$, or $\hyp$,  see \cite[Theorem~1.1.1]{John:W:Morgan}. 
In dimension three, we are not guaranteed constant curvature.  Thurston
demonstrated that there are eight $3$-dimensional maximal geometries up to
equivalence (\cite[Theorem 5.1]{Sc83}): $\mbs^3$, $\mbe^3$, $\hypp$,
$\mbs^2\times\mbe$, $\hxr$, $\pslt$, $\nil$, and $\sol$. A manifold
$M$ is called \emph{geometric} if there is a geometry $X$ and discrete subgroup
$\Gamma\leq\isom(X)$ with free $\Gamma$-action on $X$ such that $M$ is
diffeomorphic to the quotient $X/\Gamma$; we also say that $M$ \emph{admits a
geometric structure} modelled on $X$.  Similarly, a manifold with nonempty
boundary is geometric if its interior is geometric. It is worth saying that a 3-manifold is Seifert fiber if and only if it admits a geometry modelled on  $\mbs^3$, $\mbe^3$,
$\mbs^2\times\mbe$, $\hxr$, $\pslt$, or $\nil$, see for example \cite[Theorem 1.8.1]{matthias:stefan:henry} or \cite[Theorem 5.3]{Sc83}, we will use this fact all along the paper.

\subsection{Push-out constructions for classifying spaces} In this section we revisit some results that help us to construct classifying spaces using homotopy push-outs of other classifying spaces. 

\begin{definition}\label{ad1}
Let $\Gamma$ be a finitely generated  group , and $\mathcal{F}\subset \mathcal{F^{\prime}}$ a pair of families of  subgroups of $\Gamma$.  We say a collection $\mathcal{A}=\{A_{\alpha}\}_{\alpha\in I}$ of subgroups of  $\Gamma$ is adapted to the pair  $(\mathcal{F}, \mathcal{F^{\prime}})$ if the following conditions  hold:
\begin{enumerate}[a)]
    \item For all $A,B \in \mathcal{A}$, either $A=B$ or $A\cap B\in \mathcal{F}$;
    \item The collection $\mathcal{A}$ is closed under conjugation;
    \item Every  $A\in \mathcal{A}$ is self normalizing, i.e. $N_{\Gamma}(A)=A$;
    \item For all $A\in \mathcal{F}^{\prime}- \mathcal{F}$, there is $B\in \mathcal{A}$ such that $A\le B.$
\end{enumerate}
\end{definition}

\begin{theorem}\label{construction:pusout}\cite[P.~302]{LO09}
Let $\mathcal{F}\subset \mathcal{F^{\prime}}$ families of subgroups of $\Gamma$. Assume that the  collection $\mathcal{A}=\{H_{\alpha}\}_{\alpha\in I}$ is adapted to the pair  $(\mathcal{F}, \mathcal{F^{\prime}})$. Let $\mathcal{H}$ a complete set of reprensentatives of the conjugacy classes within  $\mathcal{A}$, and consider the cellular $\Gamma$-push-out  

\[
\begin{tikzpicture}
  \matrix (m) [matrix of math nodes,row sep=3em,column sep=4em,minimum width=2em]
  {
     \displaystyle\bigsqcup_{H\in \mathcal{H}}\Gamma\times_{H}E_{\mathcal{F}}H & E_{\mathcal{F}}\Gamma \\
     \displaystyle\bigsqcup_{H\in \mathcal{H}}\Gamma\times_{H}E_{\mathcal{F^{\prime}}}H & X \\};
  \path[-stealth]
    (m-1-1) edge node [left] {$f$} (m-2-1) (m-1-1.east|-m-1-2) edge  node [above] {$g$} (m-1-2)
    (m-2-1.east|-m-2-2) edge node [below] {$\varphi$} (m-2-2)
    (m-1-2) edge node [right] {$h$} (m-2-2);
\end{tikzpicture}
\]

Then $X$ is a model for  $E_{\mathcal{F^{\prime}}}\Gamma$. In the above  $\Gamma$-push-out  we require  either  (1) $f$ is the disjoint union of  cellular $H$-maps, and $g$ be a inclusion of  $\Gamma$-CW-complexes, or (2) $f$ is the disjoint union of inclusions of  $H$-CW-complexes, and $g$ is a cellular $\Gamma$-map. 
\end{theorem}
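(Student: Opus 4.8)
The plan is to verify directly that $X$ satisfies the two defining properties of a model for $E_{\mathcal{F}'}\Gamma$, using the reformulation (see \cite{Lu00}) that a $\Gamma$-CW-complex $X$ is such a model if and only if $X^K$ is contractible whenever $K\in\mathcal{F}'$ and $X^K=\emptyset$ whenever $K\notin\mathcal{F}'$. The isotropy part is immediate from the construction: the cells inherited from $E_{\mathcal{F}}\Gamma$ have stabilizers in $\mathcal{F}\subseteq\mathcal{F}'$, while a cell $[\gamma,\sigma]$ of $\Gamma\times_H E_{\mathcal{F}'}H$ has stabilizer $\gamma H_\sigma\gamma^{-1}$ with $H_\sigma\in\mathcal{F}'\cap H$, hence again in $\mathcal{F}'$. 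So the real content is the analysis of the fixed-point sets, and the whole argument reduces to a case analysis on a subgroup $K\leq\Gamma$.

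Since one of the structure maps of the $\Gamma$-push-out is a cellular inclusion, the square is a homotopy push-out, and the fixed-point functor $(-)^K$ carries it to a homotopy push-out of spaces; thus $X^K$ is the homotopy push-out of $(E_{\mathcal{F}}\Gamma)^K$ and the two pieces $Q=\bigl(\bigsqcup_H\Gamma\times_H E_{\mathcal{F}}H\bigr)^K$ and $R=\bigl(\bigsqcup_H\Gamma\times_H E_{\mathcal{F}'}H\bigr)^K$ along the maps induced by $g$ and $f$. Here I would invoke the standard formula
\[(\Gamma\times_H Y)^K=\coprod_{K\gamma H:\ \gamma^{-1}K\gamma\leq H}Y^{\gamma^{-1}K\gamma},\]
together with the fact that $(E_{\mathcal{F}}H)^L$ is contractible if $L\in\mathcal{F}$ and empty otherwise (and likewise for $\mathcal{F}'$). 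Two of the three cases are then routine. If $K\notin\mathcal{F}'$, then setting $L=\gamma^{-1}K\gamma$ every fixed-point space appearing is empty, so $(E_{\mathcal{F}}\Gamma)^K$, $Q$, and $R$ are all empty and $X^K=\emptyset$. If $K\in\mathcal{F}$, then $(E_{\mathcal{F}}\Gamma)^K\simeq\ast$, and over each index $L=\gamma^{-1}K\gamma\in\mathcal{F}$ the induced map $(E_{\mathcal{F}}H)^L\to(E_{\mathcal{F}'}H)^L$ is a map between contractible spaces; since both pieces are indexed by the same set, $f^K\colon Q\to R$ is a homotopy equivalence, and homotopy invariance of the push-out gives $X^K\simeq(E_{\mathcal{F}}\Gamma)^K\simeq\ast$.

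The crux, and the step I expect to be the main obstacle, is the case $K\in\mathcal{F}'\setminus\mathcal{F}$. Here $(E_{\mathcal{F}}\Gamma)^K=\emptyset$ (as $K\notin\mathcal{F}$), which forces $Q=\emptyset$ as well, so the push-out collapses and $X^K=R$, a disjoint union of contractible spaces indexed by pairs $(H,K\gamma H)$ with $H\in\mathcal{H}$ and $\gamma^{-1}K\gamma\leq H$. I would prove this index set is a singleton using the adapted-collection axioms of \cref{ad1}. A contributing pair is the same as a subgroup $A=\gamma H\gamma^{-1}\in\mathcal{A}$ with $K\leq A$; existence of at least one such $A$ is exactly axiom (d). For uniqueness, suppose $K\leq A_1$ and $K\leq A_2$ with $A_1,A_2\in\mathcal{A}$. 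Then $K\leq A_1\cap A_2$, and if $A_1\neq A_2$ axiom (a) gives $A_1\cap A_2\in\mathcal{F}$, whence $K\in\mathcal{F}$, a contradiction; so $A_1=A_2$. Translating back, the conjugate $A$ is unique, its representative $H\in\mathcal{H}$ is unique because $\mathcal{H}$ contains one member of each $\Gamma$-conjugacy class of $\mathcal{A}$, and the self-normalizing axiom (c), $N_\Gamma(A)=A$, pins down $\gamma$ up to the double coset $K\gamma H$. Hence $R$ has a single contractible component, giving $X^K\simeq\ast$ and completing the verification.

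\begin{remark}
The only delicate points are bookkeeping ones: confirming that $(-)^K$ really commutes with the push-out (which is where the hypothesis that one leg be a cellular inclusion is used) and checking that the double-coset parametrization of the fixed points of the induced complexes matches the subgroups $A\in\mathcal{A}$ containing $K$. Once these are in place, the three axioms (a), (c), (d) plug in precisely to yield existence and uniqueness of the relevant component, and axiom (b) guarantees that $\mathcal{A}$, hence $\mathcal{H}$, is well defined as a conjugation-closed family.
\end{remark}
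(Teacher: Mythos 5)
Your proof is correct. Note that the paper itself gives no argument for this statement — it is imported verbatim with a citation to \cite[P.~302]{LO09} — so there is no in-paper proof to compare against; what you have written is essentially the standard argument from that reference (and from L\"uck--Weiermann-type constructions): reduce to the fixed-point criterion, use that a cellular $\Gamma$-push-out along an equivariant cofibration commutes with $(-)^K$, apply the double-coset formula for $(\Gamma\times_H Y)^K$, and then feed the adapted-collection axioms in exactly the way you do — (d) for existence of a component when $K\in\mathcal{F}'\setminus\mathcal{F}$, (a) for uniqueness of the subgroup $A\in\mathcal{A}$ containing $K$, and (b)/(c) to identify components with such subgroups via $N_\Gamma(H)=H$. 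Your handling of the three cases, including the homotopy-invariance step when $K\in\mathcal{F}$, is sound as written.
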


Let $\varphi \colon \Gamma\to \Gamma_0$ be a surjective homomorphism. If $H$ is a subgroup of $\Gamma_0$ we will denote by $\tilde H$ the subgroup $\varphi^{-1}(H)$ of $\Gamma$. Given a family $\calF$ of subgroups of $\Gamma_0$ we denote by $\tilde \calF$ the smallest family of $\Gamma$ that contains the set $\{\tilde H\colon H\in\calF\}$.

\begin{theorem}\cite[Theorem 4.5.]{Luis:Lafon:Joeken}\label{construction:pushout:pullbak}
Let $\mathcal{F}$ be a family of subgroups of the  finitely generated discrete group  $\Gamma$. Let $\varphi\colon \Gamma\to \Gamma_{0}$ be  a surjective homomorphism. Let $\mathcal{F}_{0}\subseteq\mathcal{F}_{0}^{\prime}$ be a  nested pair of families of subgroups  of $\Gamma_{0}$ satisfying $\tilde{\mathcal{F}_{0}}\subseteq \mathcal{F}\subseteq\tilde{\mathcal{F}_{0}^{\prime}}$, and let  $\mathcal{A}=\{A_{\alpha}\}_{\alpha\in I}$  be a collection adapted to the pair $\mathcal{F}_{0}\subseteq\mathcal{F}_{0}^{\prime}$. Let  $\mathcal{H}$ be a complete set of representatives of the conjugacy classes in  $\tilde{\mathcal{A}}=\{\varphi^{-1}(A_{\alpha})\}_{\alpha\in I}$, and consider the following cellular $\Gamma$-push-out

\begin{equation*}
\begin{tikzcd}
\displaystyle\bigsqcup_{\tilde{H}\in \mathcal{H}}\Gamma\times_{\tilde{H}}E_{\mathcal{F}_{0}}H \arrow[r, "g"] \arrow[d, "f"]
& E_{\calF_0}\Gamma_0  \arrow[d, "h" ] \\
\displaystyle\bigsqcup_{H\in \mathcal{H}}\Gamma\times_{\tilde{H}}E_{\mathcal{F}}\tilde{H} \arrow[r, "\varphi" ]
& X
\end{tikzcd}
\end{equation*}
Then  $X$ is a model for  $E_{\mathcal{F}}\Gamma$. In the  $\Gamma$-push-out above  we require either  (1) $f$ is the disjoint union  of cellular   $\tilde{H}$-maps, and $g$ is an inclusion of  $\Gamma$-CW-complexes, or (2) $f$ is the disjoint union of inclusions of   $\tilde{H}$-CW-complexes, and $g$ is the cellular   $\Gamma$-map.
\end{theorem}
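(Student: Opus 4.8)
The plan is to deduce this statement from the purely group-theoretic pushout theorem (\cref{construction:pusout}) together with the standard behaviour of classifying spaces under pullback of families. Recall that for a homomorphism $\psi\colon G\to Q$ and a family $\calE$ of subgroups of $Q$ the \emph{pullback family} is $\psi^{\ast}\calE=\{H\le G\mid \psi(H)\in\calE\}$, and that if $Y$ is a model for $E_{\calE}Q$ then $Y$, regarded as a $G$-CW-complex through $\psi$, is a model for $E_{\psi^{\ast}\calE}G$: the isotropy groups are the $\psi$-preimages of those of $Y$, and $Y^{H}=Y^{\psi(H)}$ is contractible exactly when $\psi(H)\in\calE$, i.e. when $H\in\psi^{\ast}\calE$ (surjectivity of $\psi$ guarantees $\psi(\psi^{-1}(Q_y))=Q_y$, keeping the isotropy inside $\psi^{\ast}\calE$). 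With $\psi=\varphi$ this identifies $E_{\calF_0}\Gamma_0$, with $\Gamma$ acting through $\varphi$, as a model for $E_{\tilde{\calF_0}}\Gamma$, and likewise identifies each $E_{\calF_0}A_\alpha$, pulled back along the surjection $\varphi|_{\tilde H}\colon\tilde H\twoheadrightarrow A_\alpha$, as a model for $E_{\tilde{\calF_0}\cap\tilde H}\tilde H$, where $\tilde H=\varphi^{-1}(A_\alpha)$. These identifications are exactly what converts the top row of the stated square into the top row of a square of the form appearing in \cref{construction:pusout}.

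The crux is to verify that the pulled-back collection $\tilde{\calA}=\{\varphi^{-1}(A_\alpha)\}$ is adapted, in the sense of \cref{ad1}, to the pair $(\tilde{\calF_0},\calF)$, so that \cref{construction:pusout} applies verbatim to this pair and produces a model for $E_{\calF}\Gamma$. Conditions (a)-(c) follow from surjectivity of $\varphi$: writing $\tilde A=\varphi^{-1}(A)$ and $\tilde B=\varphi^{-1}(B)$, one has $\tilde A\cap\tilde B=\varphi^{-1}(A\cap B)$ with image $A\cap B\in\calF_0$, so $\tilde A\cap\tilde B\in\tilde{\calF_0}$ whenever $A\ne B$; conjugation is preserved because $g\tilde A g^{-1}=\varphi^{-1}(\varphi(g)A\varphi(g)^{-1})$; and self-normalization is inherited, since $g\in N_\Gamma(\tilde A)$ forces $\varphi(g)\in N_{\Gamma_0}(A)=A$, whence $g\in\tilde A$. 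For condition (d), given $K\in\calF\setminus\tilde{\calF_0}$ the inclusions $\tilde{\calF_0}\subseteq\calF\subseteq\tilde{\calF_0'}$ give $\varphi(K)\in\calF_0'\setminus\calF_0$, so by adaptedness of $\calA$ there is $B\in\calA$ with $\varphi(K)\le B$, and then $K\le\varphi^{-1}(B)\in\tilde{\calA}$.

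With adaptedness established I would apply \cref{construction:pusout} to the nested pair $(\tilde{\calF_0},\calF)$ and the collection $\tilde{\calA}$ with representatives $\mathcal H$. The resulting $\Gamma$-pushout has top-right corner $E_{\tilde{\calF_0}}\Gamma$, top-left corner $\bigsqcup_{\tilde H}\Gamma\times_{\tilde H}E_{\tilde{\calF_0}\cap\tilde H}\tilde H$, bottom-left corner $\bigsqcup_{\tilde H}\Gamma\times_{\tilde H}E_{\calF\cap\tilde H}\tilde H$, and bottom-right corner a model for $E_{\calF}\Gamma$. Substituting the pullback identifications of the first paragraph for the two top corners reproduces exactly the square in the statement, so $X$ is a model for $E_{\calF}\Gamma$. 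The constraints on the maps $f,g,h$ (inclusions versus cellular maps) transport directly from \cref{construction:pusout}, since pulling back along $\varphi$ preserves the $\Gamma$-CW-structure and the property of being an inclusion.

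I expect the main obstacle to be bookkeeping rather than anything conceptual. One must check carefully that the restricted family $\tilde{\calF_0}\cap\tilde H$ really coincides with the pullback of $\calF_0\cap A_\alpha$ along the surjection $\varphi|_{\tilde H}\colon\tilde H\twoheadrightarrow A_\alpha$ (this uses that $\varphi$ restricts to a surjection onto each $A_\alpha$), and that the two nestedness hypotheses are each deployed in the right place: $\tilde{\calF_0}\subseteq\calF$ is what makes $(\tilde{\calF_0},\calF)$ a legitimate nested pair on which \cref{construction:pusout} can be run, while $\calF\subseteq\tilde{\calF_0'}$ is precisely the input needed to verify condition (d).
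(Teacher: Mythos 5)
Your proposal is correct and complete: verifying that $\tilde{\mathcal{A}}=\{\varphi^{-1}(A_\alpha)\}$ is adapted to the pair $(\tilde{\mathcal{F}_0},\mathcal{F})$ (where surjectivity of $\varphi$ gives conditions (a)--(c) of \cref{ad1} and the inclusion $\mathcal{F}\subseteq\tilde{\mathcal{F}_0^{\prime}}$ gives condition (d)), and then identifying $E_{\mathcal{F}_0}\Gamma_0$ and each $E_{\mathcal{F}_0}H$ via the standard pullback-of-families lemma, does reduce the statement exactly to \cref{construction:pusout}. Note that the paper itself gives no proof of this theorem but imports it from \cite{Luis:Lafon:Joeken}, and your reduction to the Lafont--Ortiz pushout through pulled-back families is essentially the argument of that source, so there is nothing further to compare.
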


\begin{remark}\label{conditions:exist:pushout} Conditions (1) and (2) at the end of the statements of \cref{construction:pusout} and \cref{construction:pushout:pullbak}  are required so that the $\Gamma$-push-outs in both statements are actually \emph{homotopy} $\Gamma$-push-outs. It is worth saying that both conditions can be always achieved using a simple \emph{cylinder replacement trick}. Let us explain how the map $f$ is defined in the context of \cref{construction:pushout:pullbak}. For $\tilde H\in \mathcal H$, we have a unique $H$-map $E_{\calF_0} H \to E_{\calF_0}\Gamma_0$, where the codomain is an $\tilde H$-CW-complex by restriction. By a standard argument, we get an induced $\Gamma$-map $\Gamma\times_{\tilde H} E_{\calF_0}H\to E_{\calF_0}\Gamma_0$, and $f$ is the union of these maps running over all elements of $\mathcal H$. The construction of $g$ is analogous. Both maps $g$ and $f$ can be chosen in such a way that they satisfy conditions (1) and (2). By the equivariant cellular approximation theorem $f$ and $g$ can be taken to be cellular $\Gamma$-maps. Also the  $\Gamma$-map $g$ (resp. $f$) can be replaced, if necessary, by the  inclusion $ i\colon \bigsqcup\Gamma\times_{\tilde H} E_{\calF_0}H \to \cyl (g)$, where $\cyl(g)$ is the mapping cylinder of $g$ which equivariantly deformation retracts onto the original $E_{\calF_0} \Gamma_0$ and therefore is still a model for the same classifying space. Also note that dimension of $\cyl(g)$ is   $\max \{ \dim(E_{\calF_0}H)+1, \dim(E_{\calF_0}\Gamma_0) \}$, for more details see \cite[Remark 2.5]{luck:Weiermann}. This cylinder construction will be used repeatedly (if necessary) all throughout the paper.
\end{remark}

\section{The hyperbolic case}\label{section:hyperbolic"}

In this one theorem section we compute the $\calF_2$-geometric dimension of hyperbolic manifolds with or without boundary. 

\begin{theorem}\label{thm:hyperbolic:case}
Let $M$ be a hyperbolic 3-manifold  of finite volume (possibly with non-empty boundary) and $\Gamma=\pi_{1}(M,x_{0})$. Then $\gd_{\mathcal{F}_{2}}(\Gamma)= 3$.
 \end{theorem}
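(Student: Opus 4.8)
The plan is to establish the two inequalities $\gd_{\calF_2}(\Gamma)\le 3$ and $\gd_{\calF_2}(\Gamma)\ge 3$ separately, handling the closed and the cusped (non-empty boundary) cases in parallel. Throughout I would use that $\Gamma$ acts freely on $\mathbb{H}^3$, so that $\Gamma$ is torsion-free and $\fin=\{1\}$, that $\mathbb{H}^3$ is a $3$-dimensional model for $E_{\fin}\Gamma$, and the standard geometric facts that every $\mathbb{Z}^2\le\Gamma$ is peripheral (parabolic), that there are only finitely many conjugacy classes of maximal parabolic $\mathbb{Z}^2$'s (one per cusp), and that maximal parabolic and maximal loxodromic infinite cyclic subgroups are self-normalizing with pairwise trivial intersections among distinct conjugates.

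For the upper bound I would build $E_{\calF_2}\Gamma$ in a \emph{single} step from $E_{\fin}\Gamma$. Let $\mathcal{A}$ be the collection of all maximal loxodromic infinite cyclic subgroups together with all maximal parabolic $\mathbb{Z}^2$ subgroups, and all their conjugates. The first task is to verify that $\mathcal{A}$ is adapted to the pair $(\fin,\calF_2)$ in the sense of \cref{ad1}: closure under conjugation is automatic, the self-normalizing property and the triviality of intersections of distinct members follow from the malnormality statements above, and condition (d) holds because every nontrivial member of $\calF_2$ is a $\mathbb{Z}$ or a $\mathbb{Z}^2$ and hence sits inside a unique maximal loxodromic cyclic or maximal parabolic subgroup. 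Then \cref{construction:pusout} applies, and the dimension estimate associated to that push-out gives
\[
\gd_{\calF_2}(\Gamma)\le\max\Big\{\gd_{\fin}(\Gamma),\ \sup_{H\in\mathcal{A}}\gd_{\calF_2\cap H}(H),\ \sup_{H\in\mathcal{A}}\big(\gd_{\fin}(H)+1\big)\Big\}=\max\{3,\,0,\,3\}=3,
\]
where $\gd_{\fin}(\Gamma)\le 3$ via $\mathbb{H}^3$, the middle term is $0$ because each $H\in\mathcal{A}$ already lies in $\calF_2$, and the last term is maximized by the $\mathbb{Z}^2$-pieces, for which $\gd_{\fin}(\mathbb{Z}^2)+1=2+1=3$. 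The decisive point is that performing the passage in one step, rather than $\fin\to\vcyc\to\calF_2$, uses $\gd_{\fin}(\mathbb{Z}^2)+1=3$ in place of $\gd_{\vcyc}(\mathbb{Z}^2)+1=4$.

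For the lower bound I would show $\cd_{\calF_2}(\Gamma)\ge 3$ by exhibiting a nonzero class in $H^3_{\calF_2}(\Gamma;\underline{\mathbb{Z}})$; since $\gd\ge\cd$ this suffices. Feeding the same $\Gamma$-push-out into Bredon cohomology with the constant coefficient system $\underline{\mathbb{Z}}$ produces a Mayer--Vietoris sequence whose boundary terms are ordinary cohomologies: $E_{\fin}\Gamma$ contributes $H^{*}(M;\mathbb{Z})$, each maximal parabolic contributes $H^{*}(T^2;\mathbb{Z})$ (assembling to $H^{*}(\partial M;\mathbb{Z})$), each loxodromic piece contributes $H^{*}(S^1;\mathbb{Z})$, and the point-valued terms coming from $E_{\calF_2}H$ vanish above degree $0$. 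Since $H^3(S^1)=H^3(T^2)=0$, the relevant portion reads
\[
H^2(M;\mathbb{Z})\xrightarrow{\ \mathrm{res}\ }H^2(\partial M;\mathbb{Z})\longrightarrow H^3_{\calF_2}(\Gamma;\underline{\mathbb{Z}})\longrightarrow H^3(M;\mathbb{Z})\longrightarrow 0,
\]
with $\mathrm{res}$ the restriction to the boundary tori. In the closed case $\partial M=\emptyset$ and the fundamental class $H^3(M;\mathbb{Z})\cong\mathbb{Z}$ maps isomorphically onto $H^3_{\calF_2}(\Gamma;\underline{\mathbb{Z}})$. In the cusped case $H^3(M;\mathbb{Z})=0$, but the long exact sequence of the pair $(M,\partial M)$ together with Lefschetz duality gives $H^3(M,\partial M;\mathbb{Z})\cong\mathbb{Z}$, so $\mathrm{res}$ fails to be surjective and $\coker(\mathrm{res})\cong\mathbb{Z}$ injects into $H^3_{\calF_2}(\Gamma;\underline{\mathbb{Z}})$. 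Either way $H^3_{\calF_2}(\Gamma;\underline{\mathbb{Z}})\ne 0$, so $\cd_{\calF_2}(\Gamma)\ge 3$ and hence $\gd_{\calF_2}(\Gamma)=3$.

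I expect the main obstacle to be the lower bound in the cusped case: there $\cd(\Gamma)=2$, so the extra unit of dimension is invisible to the ordinary cohomology of $\Gamma$ and must be extracted from the \emph{relative} fundamental class in $H^3(M,\partial M)$ through the Mayer--Vietoris sequence. A secondary but essential technical point is the careful verification that $\mathcal{A}$ is adapted to $(\fin,\calF_2)$, i.e. the self-normalizing and malnormality statements for maximal parabolic and loxodromic subgroups; the finiteness of the number of cusps is what makes the boundary contribution the honest finite sum $H^*(\partial M;\mathbb{Z})$, allowing the duality argument to go through cleanly.
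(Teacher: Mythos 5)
Your proposal is correct and follows essentially the same route as the paper: the identical adapted collection (your maximal loxodromic cyclic and maximal parabolic $\mathbb{Z}^2$ subgroups are exactly the paper's maximal elements of $\calF_2\setminus\calF_0$), the same push-out via \cref{construction:pusout} with the same dimension count for the upper bound, and the same Mayer--Vietoris plus Poincar\'e--Lefschetz duality argument on $H^3(M,\partial M)\cong\mathbb{Z}$ for the lower bound. The only cosmetic difference is that the paper dispatches the closed case immediately by noting $\calF_1=\calF_2$ (no $\mathbb{Z}^2$ subgroups) and citing the known value of $\gd_{\calF_1}(\Gamma)$, whereas you run the same machinery uniformly in both cases.
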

 
\begin{proof}
We have two cases depending on whether the boundary of $M$  is empty  or not. First suppose that  $M$ has empty boundary.
 From  \cite[Corollary 4.6.]{Sc83} $\Gamma$ cannot contain a subgroup isomorphic to $\mathbb{Z}^{2}$, in consequence $\mathcal{F}_{1}=\mathcal{F}_{k}$ for all $k\ge 1.$  Thus, by  \cite[Proposition 6.1.]{Luis:Lafon:Joeken}, $\gd_{\mathcal{F}_{1}}(\Gamma)=3$, and  the statement follows.

 Now suppose that $M$ has non-empty boundary. Let us point out first two observations:
 
 \begin{enumerate}
    \item The first one is that $\Gamma$ is torsion-free. In fact, since $M$ is hyperbolic, and in particular aspherical, then the universal covering $\dbH^3$ of $M$ is a model for $E\Gamma$, and therefore $\Gamma$ has finite cohomological dimension. It follows from \cite[Corollary 2.5]{Brown} that $\Gamma$ must be torsion-free.
    
    \item  The second observation is that every infinite virtually cyclic subgroup of $\Gamma$ is isomorphic to $\dbZ$, this follows from the first observation and the classification of virtually cyclic groups, see for example \cite[Proposition 4]{Pineda:Leary}.
\end{enumerate}

 Consider the collection $\mathcal{B}$ of subgroups of $\Gamma$ consisting of
 \begin{itemize}
     \item All conjugates of the fundamental groups of the cusps of $M$. All these groups are isomomorphic to $\Z^2$ since every cusp is homemorphic to the product of a torus and an interval.
     
     \item All maximal infinite virtually cyclic of subgroups of $\Gamma$ that are not subconjugate to the fundamental group of a cusp. All these groups are isomorphic to $\Z$ by observation (2) above.
 \end{itemize}

Note that  $\Gamma$ is hyperbolic relative to the collection given by the fundamental groups of the cusps of $M$, see for example \cite[Example I, p.4]{Osin}. By \cite[Theorem 2.6]{lafont:ortiz}, the collection $\mathcal{B}$ is adapted to $(\calF_0,\calF_1)$. We claim that $\mathcal{B}$ is adapted to  $(\calF_0,\calF_2)$. Let us verify each of the conditions in \cref{ad1} for $\mathcal{B}$ and $\calF_0\subseteq \calF_2$. Conditions b) and c) follow directly from Lafont and Ortiz proof, since they do not depend on the families but only on the collection $\mathcal B$. Condition a) also follows directly from Lafont and Ortiz proof, since such a condition only depends on the small family $\calF_0$.
For, condition d), we notice that Lafont and Ortiz verified that every infinite virtually cyclic subgroup of $\Gamma$ is contained in an element of $\mathcal B$, thus we only have to verify that every virtually $\dbZ^2$-subgroup of $\Gamma$ is contained in an element of $\mathcal B$. In fact, if $H$ is a virtually $\Z^2$-subgroup of $\Gamma$, then $H$ does not contain a subgroup isomorphic to a free group in two generators, therefore by Tits alternative for relatively hyperbolic group, see for instance \cite[Remark~3.5]{BW13},  $H$ is subconjugate to the fundamental group of a cusp.

Now we are in a good shape to use  \cref{construction:pusout} and \cref{conditions:exist:pushout} to construct a model for   $E_{\mathcal{F}_{2}}\Gamma$. Let $\mathcal{H}$ be a collection  of representatives of conjugacy classes in $\mathcal{B}$, then the space $X$ defined by the $\Gamma$-push-out given by \cref{construction:pusout}
\begin{equation}
\begin{tikzcd}\label{model:case:hyperbolic}
\displaystyle\bigsqcup_{H\in \mathcal{H}}\Gamma\times_{H}EH \arrow[r, "g"] \arrow[d, "f"]
& E\Gamma  \arrow[d, "h" ] \\
\displaystyle\bigsqcup_{H\in \mathcal{H}}\Gamma\times_{H}E_{\mathcal{F}_{2}}H \arrow[r, "\varphi" ]
& X
\end{tikzcd}
\end{equation}
provides a model for $E_{\mathcal{F}_{2}}\Gamma$. To construct this $\Gamma$-push-out we use the cylinder construction described in \cref{conditions:exist:pushout}.

We claim that $X$ can be chosen to be of dimension  3. Note  that, by \cref{conditions:exist:pushout}, the dimension of $X$ is  \[\max\{\dim(EH)+1, \dim(E\Gamma),\dim(\ef2H)\}.\] We have seen that  $H$ is  isomorphic to  $\Z$ or $\Z^2$, therefore $EH$ has a model of dimension 1 or 2, and in both cases there is a model for $\ef2 H$ of dimension 0.
The hyperbolic space $\dbH^3$ is a 3-dimensional model for $E\Gamma$. This proves our claim. As a consequence $\gd_{\calF_2}(\Gamma)\leq 3$.

Now we are going to show that  $\gd_{\calF_2}(\Gamma)\ge 3$. 
It is well know that  $\gd_{\calF_2}(\Gamma)\ge \cd_{\calF_2}(\Gamma)$, thus it is enough to show that $\cd_{\calF_2}(\Gamma)\ge 3$. To show this last inequality, we will prove that  $H^{3}_{\calF_2}(\Gamma;\underline{\Z})\neq 0$. The  Mayer-Vietoris long exact sequence applied to \eqref{model:case:hyperbolic}  leads to 
\begin{equation}\label{sucesion:mayer:vietoris:1}
\cdots \to H^{3}_{\calF_2}(\Gamma;\underline{\Z}) \to H^{3}(\Gamma;\underline{\Z})\oplus\displaystyle\bigoplus_{H\in \calH}H^{3}_{\calF_2}(H;\underline{\Z})\to \displaystyle\bigoplus_{H\in \calH}H^{3}(H;\underline{\Z})\to \cdots
\end{equation}

Since there is a 0-dimensional model for $\ef2H$ for all $H\in \calH$, thus $H_{\calF_2}^{3}(H;\underline{\Z})=0$, therefore  $\displaystyle\bigoplus_{H\in \calH}H_{\calF_2}^{3}(H;\underline{\Z})=0$. By \cite[Theorem 1.1.7]{sergei:matveev} there exists a 2-dimensional complex $X\subseteq M$ such that $X$ has the same homotopy type of $M$, Thus the universal cover  $\Tilde{X}$ of $X$ is a model  for $E\Gamma$. We conclude that $H^{3}(\Gamma;\underline{\Z})=H^{3}(M;\Z)=H^{3}(X;\Z)=0$.

As a consequence of \eqref{sucesion:mayer:vietoris:1} and the observations in the previous paragraph we get the following exact sequence
\begin{equation*}\label{sucesion:mayer:vietoris:2}
\cdots \to H^{2}(\Gamma;\underline{\Z})\xrightarrow[]{\varphi}  \displaystyle\bigoplus_{\substack{H\in \calH\\ H\cong\Z^2}}H^{2}(H;\underline{\Z})\to H^{3}_{\calF_2}(\Gamma;\underline{\Z})\to 0 
\end{equation*}

Let us describe the map $\varphi$ more explicitly. Recall that this map is induced by the unique $H$ equivariant map $\bigsqcup_{H\in \mathcal{H}, H\cong \dbZ^2} EH \to E\Gamma$ considering $E\Gamma$ as an $H$-CW-complex by restriction, see \cref{conditions:exist:pushout}. To give a concrete description of this inclusion map, we fix as a model for $E\Gamma$ the universal covering $\tilde N$ of the \emph{thick} part $N$ of $M$, that is we chop off all the cusps from $M$. Since every $\Z^2$-subgroup in $\mathcal H$ is the fundamental group of a cusp, thus they correspond to the fundamental groups of the connected components of the boundary of $N$. Therefore the map $\bigsqcup_{H\in \mathcal{H}, H\cong \dbZ^2} \Gamma\times_H EH \to E\Gamma$ can be identified with the inclusion $p^{-1}(\partial N)\to \tilde N$, where $p\colon \tilde{N}\to N$ is the universal covering map. On the other hand we have the natural isomorphisms $H^{2}(\Gamma;\underline{\Z})\cong H^2(E\Gamma/\Gamma;\dbZ)=H^2(N;\dbZ)$ and $H^{2}(H;\underline{\Z})\cong H^{2}(EH/H;\Z)$, see for example \cite[Example~4.3]{ANCMSS21}. In conclusion $\varphi$ can be identified with the map induced in Bredon cohomology by the inclusion $\partial N\hookrightarrow N$.

In order to  show that  $H^{3}_{\calF_2}(\Gamma;\underline{\Z})\neq 0$, it is enough to show that the  map $\varphi$ is not surjective.  Using the long exact sequence of the pair $(N,\partial N)$
\begin{equation*}
\cdots \to  H^{2}(N,\partial N)\to H^{2}(N)\xrightarrow[]{\varphi}H^{2}(\partial N)\to H^{3}(N,\partial N)\to 0
\end{equation*}
we can see that $\varphi$ is not surjective if and only if $H^{3}(N,\partial N)\neq 0$. By Poincaré duality  $H^{3}(N,\partial N)\cong H_{0}(N)=\Z$. Therefore $\varphi$ is not surjective. This finishes the proof.
\end{proof}

\section{The Seifert fiber case}\label{section:seifert}

In this section we compute the $\calF_2$-dimension of Seifert fiber manifolds with and without boundary. These computations are carried out in  \cref{thm:seifer:base:spherical}, \cref{thm:seifer:base:hyperbolic}, \cref{thm:seifer:base:flat}, and \cref{thm:seifer:base:flat:bondary}

\subsection{ Seifert fiber manifolds with bad or spherical base orbifold}

\begin{theorem} \cite[Proposition 5.1]{Luis:Lafon:Joeken}\label{thm:seifer:base:spherical}
Let $M$ be a  closed Seifert fiber 3-manifold with base orbifold $B$ and fundamental group $\Gamma$. Assume that $B$ is either a bad orbifold, or  a good orbifold modelled on $\mbs^{2}$. Then $\Gamma$ is virtually cyclic, in particular  $\gd_{\mathcal{F}_{k}}(\Gamma)=0$ for all $k\ge 1$. 
\end{theorem}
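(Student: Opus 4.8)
The plan is to prove the statement in two stages: first show that $\Gamma$ is virtually cyclic, and then deduce the vanishing of $\gd_{\mathcal{F}_k}$ almost for free. The key input is the exact sequence of \cref{sequence:exact:groups:fundamental:seifert}, namely $1\to K\to \Gamma\to \pi_1^{orb}(B)\to 1$ with $K$ cyclic (generated by a regular fiber). Everything therefore hinges on controlling the quotient $\pi_1^{orb}(B)$, so I would begin by showing that under our hypotheses on $B$ this quotient is finite.

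I would treat the two possibilities for $B$ separately. If $B$ is spherical (modeled on $S^2$), then its orbifold universal cover is the $2$-sphere and $B=S^2/\pi_1^{orb}(B)$, with $\pi_1^{orb}(B)$ realized as a finite subgroup of $O(3)$ acting on $S^2$; in particular it is finite. If $B$ is a bad orbifold, I would invoke the classification of closed bad $2$-orbifolds: up to the obvious symmetries these are the teardrop $S^2(n)$ and the spindle (football) $S^2(m,n)$ with $m\neq n$. From the standard presentation $\langle x_1,\dots,x_\ell \mid x_i^{p_i}=1,\ x_1\cdots x_\ell=1\rangle$ of the orbifold fundamental group of a sphere with cone points, a direct computation gives $\pi_1^{orb}(S^2(n))=1$ and $\pi_1^{orb}(S^2(m,n))=\mathbb{Z}/\gcd(m,n)$, both finite. (Alternatively one may simply cite \cite{Sc83} for this finiteness.) In either case $\pi_1^{orb}(B)$ is finite.

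With $\pi_1^{orb}(B)$ finite, the exact sequence shows $K$ has finite index in $\Gamma$. Since $K$ is cyclic, if $K$ is finite then $\Gamma$ is finite, and if $K\cong\mathbb{Z}$ then $\Gamma$ is virtually $\mathbb{Z}$; either way $\Gamma$ is virtually cyclic, i.e. $\Gamma\in\calF_1$. Finally, since $\calF_1\subseteq\calF_k$ for every $k\ge 1$, we get $\Gamma\in\calF_k$, and as recorded in the proof of \cref{lemma:low:dimensions} one has $\gd_{\calF_k}(\Gamma)=0$ precisely when $\Gamma\in\calF_k$. This yields $\gd_{\calF_k}(\Gamma)=0$ for all $k\ge 1$.

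The only genuinely nontrivial point is the finiteness of $\pi_1^{orb}(B)$ for bad base orbifolds; the spherical case is immediate, and the passage from the exact sequence to virtual cyclicity and then to the dimension statement is routine. I expect the main obstacle to be quoting the classification of bad $2$-orbifolds precisely enough that the fundamental-group computation is complete and leaves no stray case.
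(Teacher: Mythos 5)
Your proof is correct, but note a structural point first: this paper does not prove the statement at all --- it is imported wholesale from \cite[Proposition 5.1]{Luis:Lafon:Joeken}, as the citation in the theorem header indicates. So there is no internal proof to compare against, and your argument has to stand on its own; it does. The route you take (the fiber exact sequence of \cref{sequence:exact:groups:fundamental:seifert} plus finiteness of $\pi_1^{orb}(B)$) is the natural self-contained one. An alternative, more in line with how this paper views these pieces elsewhere (see the ``moreover'' part of \cref{thm:principal:1}), is to quote Scott's geometric classification: a closed Seifert fibered space whose base orbifold is bad or spherical is modeled on $S^3$ or $S^2\times\dbE$, and fundamental groups of such closed manifolds are finite or virtually cyclic. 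Both arguments lean on \cite{Sc83}; yours has the merit of avoiding the geometric classification of Seifert fibered spaces entirely.

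One caveat, which is exactly the ``stray case'' you anticipated. The definition of Seifert fibered space used in this paper (and in \cite{Sc83}) allows fibered solid Klein bottle neighborhoods, so the base orbifold may contain reflector circles. Consequently the closed bad $2$-orbifolds that can a priori occur as base orbifolds are not only the teardrop $S^2(n)$ and the spindle $S^2(m,n)$ with $m\neq n$, but also their reflection quotients: the disk with mirror boundary and one corner reflector of order $n$, and the disk with mirror boundary and two corner reflectors of distinct orders $m\neq n$. This does not damage your argument --- their orbifold fundamental groups are $\mathbb{Z}/2$ and the dihedral group of order $2\gcd(m,n)$ respectively (each is double covered in the orbifold sense by the corresponding teardrop or spindle), hence still finite --- but your case analysis as written omits them, so you should either add these two cases or fall back on your stated alternative of citing \cite{Sc83} for finiteness of $\pi_1^{orb}(B)$ in all bad cases. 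The spherical case, and the passage from finiteness of the quotient to virtual cyclicity of $\Gamma$ and then to $\gd_{\calF_k}(\Gamma)=0$ via the observation recorded in the proof of \cref{lemma:low:dimensions}, are all fine.
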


\subsection{ Seifert fiber manifolds with hyperbolic base orbifold}
\begin{theorem}\label{thm:seifer:base:hyperbolic}
Let $M$ be  a Seifert fiber 3-manifold (possibly with non-empty boundary) with base orbifold $B$ and fundamental  group  $\Gamma$. Asumme that  $B$ is modelled on  $\maH^2$, then $\gd_{\calF_2}(\Gamma)=2.$
\end{theorem}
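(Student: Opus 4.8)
\emph{Setup and lower bound.} I would split the argument into the two inequalities $\gd_{\calF_2}(\Gamma)\ge 2$ and $\gd_{\calF_2}(\Gamma)\le 2$. Throughout I use the exact sequence $1\to K\to \Gamma\to Q\to 1$ from \cref{sequence:exact:groups:fundamental:seifert}, where $Q=\pi_1^{orb}(B)$ and $K$ is the cyclic subgroup generated by a regular fiber; here $K$ is infinite because $B$ modelled on $\maH^2$ forces $M$ not to be covered by $S^3$, so $K\cong\dbZ$. Since $B$ is hyperbolic, $Q$ is a (word-)hyperbolic $2$-orbifold group, hence contains a non-abelian free subgroup and is not virtually abelian. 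As a quotient of a virtually abelian group is virtually abelian, the surjection $\Gamma\twoheadrightarrow Q$ shows $\Gamma$ is not virtually $\dbZ^r$ for $r=1,2$. Because $\Gamma$ is finitely generated, \cref{lemma:low:dimensions} with $n=2$ gives $\gd_{\calF_2}(\Gamma)\ge 2$.

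\emph{Upper bound, set-up of the push-out.} For the upper bound I would feed $\varphi\colon\Gamma\to Q$ into \cref{construction:pushout:pullbak}, taking on $Q$ the nested families $\calF_0=\fin\subseteq\vcyc=\calF_0'$ and on $\Gamma$ the family $\calF=\calF_2$. First one checks the nesting hypotheses: the preimage of a finite subgroup of $Q$ is virtually cyclic, so $\widetilde{\fin}\subseteq\calF_1\subseteq\calF_2$; conversely, if $H\in\calF_2$ then $\varphi(H)\cong H/(H\cap K)$ is virtually abelian inside $Q$, hence virtually cyclic because $Q$ has no $\dbZ^2$, so $\calF_2\subseteq\widetilde{\vcyc}$. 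As the adapted collection I take $\calA$ to be the set of maximal infinite virtually cyclic subgroups of $Q$; the four conditions of \cref{ad1} hold by the standard structure of virtually cyclic subgroups of a hyperbolic group: distinct maximal ones meet in a finite subgroup, the collection is conjugation-invariant, each is self-normalizing, and every infinite virtually cyclic subgroup lies in a (unique) maximal one.

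\emph{Dimension count.} It then remains to bound the dimension of the resulting $\Gamma$-push-out $X$. The base term $E_{\fin}Q=\underline{E}Q$ has a model of dimension at most $2$: the plane $\maH^2$ when $M$ is closed, and a tree when $M$ has non-empty boundary (so $Q$ is virtually free). For each $\widetilde H=\varphi^{-1}(A)$ with $A\in\calA$, the group $\widetilde H$ is an extension of an infinite virtually cyclic group by $K\cong\dbZ$, hence virtually $\dbZ^2$; therefore $\widetilde H\in\calF_2$ and $E_{\calF_2}\widetilde H$ is a point, while the corner $E_{\calF_0}\widetilde H$ — the classifying space for the family of subgroups of $\widetilde H$ mapping to a finite subgroup of $Q$ — is obtained by pulling back the action of $A$ on the line $\underline{E}A$ along $\widetilde H\twoheadrightarrow A$, so it is $1$-dimensional. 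Consequently $\dim X\le\max\{\dim E_{\fin}Q,\ \dim E_{\calF_2}\widetilde H,\ \dim E_{\calF_0}\widetilde H+1\}\le\max\{2,0,2\}=2$, giving $\gd_{\calF_2}(\Gamma)\le 2$ and hence equality.

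\emph{Main obstacle.} I expect the delicate points to be twofold. First, confirming $\calF_2\subseteq\widetilde{\vcyc}$, which is precisely where the hyperbolicity of $B$ enters through the absence of $\dbZ^2$ in $Q$. Second, verifying uniformly across the closed and the non-empty-boundary cases that $\calA$ is genuinely adapted — in particular the self-normalizing property and the finite-intersection of distinct members — together with the correct identification of the restricted family on each $\widetilde H$, so that the corner $E_{\calF_0}\widetilde H$ is genuinely $1$-dimensional and the crude push-out estimate indeed collapses to $2$.
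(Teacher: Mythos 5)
Your proposal is correct and is essentially the paper's own argument: the lower bound comes from \cref{lemma:low:dimensions} exactly as in the paper, and the upper bound from feeding $\varphi\colon\Gamma\to\pi_1^{orb}(B)$ into \cref{construction:pushout:pullbak} with the adapted collection of maximal infinite virtually cyclic subgroups of the base group (for adaptedness the paper cites \cite[Theorem~2.6]{lafont:ortiz} where you invoke word-hyperbolicity of $Q$ -- the same fact), the same identification of each $\widetilde H=\varphi^{-1}(A)$ as virtually $\dbZ^2$, and the same dimension count $\max\{1+1,\,2,\,0\}=2$. The only superficial difference is notational: the paper works with the families $\calF_0'\subseteq\calF_1'=\calF_2'$ on the base group rather than $\fin\subseteq\vcyc$, and the equality $\calF_1'=\calF_2'$ there is the same ``no $\dbZ^2$ in $Q$'' observation you use to verify $\calF_2\subseteq\widetilde{\vcyc}$.
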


\begin{proof}
First, we are going to show that $\gd_{\calF_2}(\Gamma)\le 2$, for this we will construct a model for  $E_{\calF_2}\Gamma$ of dimension 2 using \cref{construction:pushout:pullbak}. By \cref{sequence:exact:groups:fundamental:seifert} we obtain an exact sequence of groups \begin{equation}\label{sequence:groups:seifert}1\to K \to \Gamma \xrightarrow{\varphi} \Gamma_0\to 1,\end{equation}
where $\Gamma_0=\pi_1^{orb}(B)$ is  the  orbifold fundamental group of  $B$ and $K$ is a cyclic infinite subgroup of  $\Gamma$ generated by a regular fiber.
Let $\calF_n^{\prime}=\{H<\Gamma_0: H \ \text{is  virtually} \Z^r \ \text{for some}\ r=0,1,\dots,n\}$. Note that, since $\Gamma_0$ is a  Fuchsian group,  it  cannot have a subgroup isomorphic to $\Z^2$, therefore $\calF_2^{\prime}=\calF_1^{\prime}$. By \cite[Theorem 2.6]{lafont:ortiz} the collection $\calA=\{H<\Gamma_0: H \ \text{is virtually} \ \Z \ \text{ and maximal in} \ \calF_1^{\prime}- \calF_0^{\prime}\}$ is adapted to the pair  $(\calF_0^{\prime},\calF_1
^{\prime})$ .

Consider the pulled-back families  $\tilde{\calF_0^{\prime}}$ and $\tilde{\calF_1^{\prime}}$ which by definition are generated by  $\{\varphi^{-1}(L): L \in \calF_0^{\prime}\}$ and $\{\varphi^{-1}(L): L \in \calF_1^{\prime}\}$ respectively. We claim that $\tilde{\calF_0^{\prime}}\subset\calF_2\subset \tilde{\calF_1^{\prime}}$.  The first inclusion follows from the fact that $\varphi^{-1}(L)$, with $L\in \calF_0'$, is a virtually cyclic subgroup of $\Gamma$. While the second follows from he following argument. For $L\in \calF_2$, we have three options:  $L$ is finite, $L$ is  virtually $\Z$ or $L$ is  virtually $\Z
^2$. If $L$ is virtually $\Z^2$, then there is a  subgroup $L^{\prime}<L$ of finite index isomorphic to $\Z^{2}$, note that  $\varphi(L^{\prime})$ is a cyclic subgroup of $\Gamma_0$ that  has finite index in $\varphi(L)$, thus  $\varphi(L)\in \calF_1^{\prime}$. We conclude that  $L\subseteq\varphi^{-1}(\varphi(L))\in \tilde{\calF_1^{\prime}}$. The other cases follow using a completely analogous argument. Let $\calH$ be  a complete set of representatives of conjugacy classes in $\calA^{*}=\{\varphi
^{-1}(H):H\in \calA\}$. Then by  \cref{construction:pushout:pullbak} we have the following $\Gamma$-push-out
gives a model  $X$ for   $E_{\calF_2}\Gamma$

\[\xymatrix{\coprod_{\tilde{H}\in\calH}\Gamma\times_{\tilde{H}}E_{\calF_0}H \ar[d] \ar[r]  & E_{\calF_0}\Gamma_0\ar[d]\\
\coprod_{\tilde{H}\in\calH}\Gamma\times_{\tilde{H}}E_{\calF_2}\tilde{H}\ar[r] & X
}
\]
where $\tilde H$ stands for $\varphi^{-1}(H)$. In the above push-out, if needed, we can replace the upper horizontal arrow by its mapping cylinder to satisfy the conditions at the end of the statement of \cref{construction:pushout:pullbak}, see \cref{conditions:exist:pushout}.

We claim that $X$ can be choosen to be of dimension  2. Note  that the dimension of $X$ is  \[\max\{\dim(E_{\calF_0}H)+1, \dim(E_{\calF_0}\Gamma_0),\dim(\ef2\tilde{H})\}.\] 
Since $H\in \calA$ we have that  $H$ is virtually  $\Z$, then a model for  $E_{\calF_0}H$ is $\mathbb{R}$ for all $\tilde{H}\in \mathcal{H}$. As a consequence of \cite[corollary 2.8]{Bridson:Haefliger} a model for  $E_{\calF_0}\Gamma_0$ is $\maH^2$. Finally,  we show that  $\tilde{H}$ is virtually $\Z
^2$ for all $\tilde{H}\in \mathcal{H}$, and in consequence the one point space is a model for  $E_{\calF_2}\tilde{H}$ for all $\tilde{H}\in \mathcal{H}$. Let $\tilde{H}\in \calH$, by definition of  $\calH$, $\tilde{H}\in \calA^{*}$, then $\tilde{H}=\varphi^{-1}(S)$ for some $S\in \calA$. The group $S$ is virtually $\Z$, i.e.  there is a subgroup $T<S$ such that  $T$ is a finite index subgroup of $S$ isomorphic to $\Z$. Thus by  \eqref{sequence:groups:seifert} we have the following short exact sequence 
\begin{equation*}\label{sequence:groups}
    1\to K\to \varphi^{-1}(T)\to T\to 1,
    \end{equation*}
then $\varphi^{-1}(T)$ is isomorphic to  $\Z\rtimes \Z$ which is virtually $\Z^2$. It follows that $\tilde{H}$ is virtually $\Z^2$. Therefore $X$ can be constructed to be a 2-dimensional $\Gamma$-CW-complex.

The group $\Gamma$ is finitely generated as it is the fundamental group of a compact manifold. On the other hand $\Gamma\notin \calF_2$ since it has as a quotient the group $\Gamma_0$ \eqref{sequence:groups:seifert}, which is either  virtually non-cyclic free or virtually a surface group. Therefore,  by \cref{lemma:low:dimensions}, $\gd_{\calF_2}(\Gamma)\ge 2$.
\end{proof}

\subsection{ Seifert fiber manifolds with flat base orbifold}
\begin{theorem}\label{thm:seifer:base:flat} 
Let $M$ be a Seifert fiber closed 3-manifold  (without boundary) with base orbifold $B$ and  fundamental group  $\Gamma$. Suppose that  $B$ is modelled on  $\E^2$. Then $M$ is  modelled  on  $\E^3$ or is modelled on  $\nil$. Moreover, 
\begin{enumerate}[a)]
    \item If $M$ is modelled on $\E^3$, then $\gd_{\calF_2}(\Gamma)=5$.
    \item If $M$ is modelled on $\nil$, then $\gd_{\calF_2}(\Gamma)=3$.
\end{enumerate}
\end{theorem}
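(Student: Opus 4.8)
The plan is to first pin down the two possible geometries together with the resulting algebraic structure of $\Gamma$, and then treat the two cases separately. By \cref{sequence:exact:groups:fundamental:seifert} we have a central-by-construction short exact sequence $1\to K\to\Gamma\xrightarrow{\varphi}\Gamma_0\to 1$ with $K\cong\dbZ$ generated by a regular fibre and $\Gamma_0=\pi_1^{orb}(B)$; since $B$ is modelled on $\E^2$, the group $\Gamma_0$ is a $2$-dimensional crystallographic group, hence virtually $\dbZ^2$. That a closed Seifert fibered $3$-manifold with flat base orbifold is modelled on $\E^3$ or on $\nil$ is part of Thurston's classification \cite{Sc83}, and the two cases are distinguished by the Euler number $e(M)$ of the fibration, which vanishes precisely in the $\E^3$ case. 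When $e(M)=0$ the fibration virtually splits, so $\Gamma$ is virtually $\dbZ^3$; when $e(M)\neq 0$ the extension class survives on every finite-index subgroup of $\Gamma_0$, so $\Gamma$ is virtually the integral Heisenberg group and in particular is \emph{not} virtually abelian.

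For part (a), $\Gamma$ is virtually $\dbZ^3$, and I would invoke the computation of the $\calF_k$-geometric dimension of virtually $\dbZ^n$ groups \cite{prytua2018bredon,Nucinkis:Moreno:Pasini}, which gives $\gd_{\calF_k}=n+k$ for $0\le k<n$; specialising to $n=3$, $k=2$ yields $\gd_{\calF_2}(\Gamma)=5$. If one wishes to be self-contained, the upper bound can be produced by an iterated push-out (\cref{construction:pusout}), coning off first the virtually cyclic and then the virtually $\dbZ^2$ subgroups, while the matching lower bound $\cd_{\calF_2}(\Gamma)\ge 5$ is extracted from a Bredon-cohomology computation with a suitable (non-constant) coefficient module.

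For part (b) the key structural fact I would establish is that $\calF_2=\varphi^{-1}(\vcyc(\Gamma_0))$, i.e. a subgroup $L\le\Gamma$ is virtually $\dbZ^{\le 2}$ if and only if $\varphi(L)$ is virtually cyclic. The inclusion $\supseteq$ is immediate, since the preimage of a virtually cyclic subgroup is an extension of it by $K\cong\dbZ$, hence virtually $\dbZ^2$. For $\subseteq$, suppose $L$ is virtually $\dbZ^2$ with $\varphi(L)$ virtually $\dbZ^2$; passing to finite index one finds commuting elements $a,b$ whose images are independent in $\Gamma_0$, yet inside a finite-index Heisenberg subgroup the commutator $[a,b]$ is a nonzero power of the central fibre generator (this is exactly where $e(M)\neq 0$ is used), a contradiction. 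Granting this, a $3$-dimensional model for $E_{\vcyc}\Gamma_0$ (note $\gd_{\vcyc}(\Gamma_0)=2+1=3$ by the virtually $\dbZ^2$ case of the same formula) pulls back along $\varphi$ to a $3$-dimensional model for $E_{\calF_2}\Gamma$, giving $\gd_{\calF_2}(\Gamma)\le 3$. For the reverse inequality I would take any model $X$ for $E_{\calF_2}\Gamma$ and pass to the fixed-point set $X^K$: as $K$ is normal, $\Gamma_0=\Gamma/K$ acts on $X^K$, and since $\varphi^{-1}(S)\in\calF_2$ if and only if $S\in\vcyc(\Gamma_0)$, the complex $X^K$ is in fact a model for $E_{\vcyc}\Gamma_0$; hence $\dim X\ge\dim X^K\ge\gd_{\vcyc}(\Gamma_0)=3$.

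The main obstacle is the structural dichotomy itself, and in particular the commutator/Euler-number argument of (b): it is precisely this that makes $\calF_2(\Gamma)$ collapse to a pulled-back family in the $\nil$ case but not in the $\E^3$ case, and it explains why the clean self-normalizing push-out used for a hyperbolic base orbifold in \cref{thm:seifer:base:hyperbolic} is unavailable here, since the maximal $\calF_2$-subgroups of the flat group $\Gamma_0$ are normal rather than self-normalizing, so \cite{lafont:ortiz} does not apply. In case (a) the genuinely nontrivial input is instead the lower bound $\cd_{\calF_2}(\Gamma)\ge 5$ coming from the virtually $\dbZ^n$ dimension formula.
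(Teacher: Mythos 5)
Your proposal is correct. Part (a) coincides with the paper's argument: both reduce to the observation that $\Gamma$ is virtually $\Z^3$ and quote the known value $\gd_{\calF_2}=5$ for such groups (\cref{dg:group:virtually:Z3:F2}, which combines \cite{onorio} for the lower bound and \cite[Proposition 1.3]{prytua2018bredon} for the upper bound). Part (b), however, takes a genuinely different route. The paper invokes a theorem of C.~B.~Thomas \cite{Charles:Thomas} to write $\Gamma\cong\Z^2\rtimes_A\Z$ with $A$ parabolic and then applies \cref{thm:case:plane}: there one quotients by the maximal infinite cyclic subgroup $N$ fixed by $A$, shows by elementary $2\times 2$ matrix computations (\cref{lemma:case:hyperbolic}) that $\calF_2$ is the pullback of the virtually cyclic family of $\Gamma/N$, and uses the model of \cite{Conolly:Frank:Hartglas} for the $2$-crystallographic group $\Gamma/N$, with the lower bound extracted from their cohomology computation. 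You instead quotient by the fiber subgroup $K$ directly, identify $\calF_2$ with $\varphi^{-1}(\vcyc(\pi_1^{orb}(B)))$ via the commutator pairing in a finite-index Nil-lattice, and get the lower bound from the (correct and model-independent) observation that $X^K$ is a model for $E_{\vcyc}(\Gamma/K)$ whenever $X$ is a model for $E_{\calF_2}\Gamma$. Both routes ultimately rest on the same external input, namely that $2$-crystallographic groups have $\gd_{\vcyc}=3$. What your route buys is uniformity: it never needs $\Gamma$ to be a split extension $\Z^2\rtimes\Z$. This matters, since a Nil-manifold Seifert fibered over $S^2(3,3,3)$ with nonzero Euler number has finite first homology, so its fundamental group surjects onto no copy of $\Z$ and is not of that split form; your argument covers such manifolds directly, whereas the paper's reduction goes through \cref{thm:case:plane}, which presupposes the splitting. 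What the paper's route buys, when the splitting is available, is that the family identification is entirely elementary rather than passing through lattices in Nil. Two minor points to tighten in your write-up: \cref{sequence:exact:groups:fundamental:seifert} only makes $K$ normal, not central (centrality can fail when the base orbifold is non-orientable), but fortunately you use only normality, for $X^K$ to be a $\Gamma/K$-complex, plus the virtual Heisenberg structure; and in the commutator step you should record that a torsion-free virtually $\Z^2$ subgroup of a nilpotent group is genuinely $\Z^2$ (Klein bottle groups are not nilpotent) and that $K$ is commensurable with the center of the Nil-lattice, so independence of the images in $\pi_1^{orb}(B)$ is exactly what the commutator pairing detects.
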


\begin{theorem}\label{thm:seifer:base:flat:bondary}
Let $M$ be a compact Seifert fiber 3-manifold, with base orbifold  $B$ and fundamental group $\Gamma$. Suppose that  $B$ is modelled on $\E^2$ and $M$ has non-empty boundary. Then  $M$ is  diffeomorphic to $T^2\times I$ or the twisted $I$-bundle over Klein bottle. In particular, $\gd_{\calF_k}(\Gamma)=0$ for all $k\ge 2$.
\end{theorem}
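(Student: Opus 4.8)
The plan is to first pin down the possible base orbifolds $B$ and then read off the diffeomorphism type of $M$ and the fundamental group $\Gamma$. Since $M$ has non-empty boundary, its boundary components fiber over the boundary circles of $B$, so $B$ is a compact flat $2$-orbifold with non-empty boundary; as the base of a Seifert fibration it carries only cone-point singularities. Being modeled on $\E^2$ forces $\chi^{orb}(B)=0$, and I would use the formula $\chi^{orb}(B)=\chi(|B|)-\sum_i(1-1/m_i)$ together with the fact that a compact surface with boundary satisfies $\chi(|B|)\le 1$. Since each cone point strictly decreases $\chi^{orb}$, the only solutions are the annulus $S^1\times I$ and the Möbius band (both with $\chi(|B|)=0$ and no cone points), and the disk with two cone points of order two $D^2(2,2)$ (where $\chi(|B|)=1$ and $(1-\tfrac12)+(1-\tfrac12)=1$). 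In particular an annulus or Möbius band base forces a Seifert fibration with no exceptional fibers.

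Next I would identify $M$ in each case, using that $M$ is orientable (it occurs as a JSJ piece of an oriented manifold). An orientable circle bundle over the annulus is classified by an Euler class in $H^2(S^1\times I;\Z)=0$, hence is trivial, giving $M\cong T^2\times I$. Over the Möbius band the orientable total space of the (necessarily twisted) circle bundle is the twisted $I$-bundle over the Klein bottle; the same manifold also Seifert-fibers over $D^2(2,2)$, this being the classical pair of distinct Seifert structures on the twisted $I$-bundle over the Klein bottle. I would cite \cite{Sc83} for these identifications. This geometric bookkeeping --- matching the three flat base orbifolds to their orientable Seifert fillings --- is the step I expect to be the main obstacle; everything after it is formal.

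Finally, for the dimension computation I would compute $\Gamma=\pi_1(M)$: one has $\pi_1(T^2\times I)\cong\Z^2$, while the fundamental group of the twisted $I$-bundle over the Klein bottle is the Klein bottle group $\Z\rtimes\Z$, which contains $\Z^2$ with index two. Hence in either case $\Gamma$ is virtually $\Z^2$, so $\Gamma\in\calF_2\subseteq\calF_k$ for every $k\ge 2$. By the observation recorded in the proof of \cref{lemma:low:dimensions} that $\gd_{\calF_n}(G)=0$ if and only if $G\in\calF_n$, we conclude $\gd_{\calF_k}(\Gamma)=0$ for all $k\ge 2$. I note that this last step does not actually require the full diffeomorphism classification: by \cref{sequence:exact:groups:fundamental:seifert} the group $\Gamma$ is an extension of $\pi_1^{orb}(B)$ by an infinite cyclic $K$, and since $\pi_1^{orb}(B)$ is virtually cyclic in all three cases ($\Z$ for the annulus and Möbius band, the infinite dihedral group for $D^2(2,2)$), a finite-index subgroup of $\Gamma$ sits in a split extension $\Z\rtimes\Z$ and so $\Gamma$ is virtually $\Z^2$; thus the membership $\Gamma\in\calF_2$ could be obtained directly from the exact sequence.
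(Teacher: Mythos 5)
Your proposal is correct, but it reaches the classification of $M$ by a genuinely different route than the paper. The paper's proof is a black-box argument: it quotes \cite[Theorem~1.2.2]{John:W:Morgan}, which says that such an $M$ is modeled on $\E^3$ or $\nil$ or is diffeomorphic to $T^2\times I$ or the twisted $I$-bundle over the Klein bottle, and then discards the first two possibilities because manifolds modeled on those geometries have empty boundary; the fundamental-group endgame ($\Z^2$ or $\Z\rtimes\Z$, hence virtually $\Z^2$, hence $\gd_{\calF_k}(\Gamma)=0$) is identical to yours. You instead derive the classification by hand: enumerate the flat $2$-orbifolds with non-empty boundary and only cone points via $\chi^{orb}=0$ (the annulus, the Möbius band, and $D^2(2,2)$), then identify the orientable Seifert fibered spaces over each. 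This is more self-contained but redoes standard Seifert bookkeeping, and it has one caveat: your assertion that the base of a Seifert fibration carries only cone-point singularities is not automatic under the paper's definition (which allows fibered solid Klein bottle neighborhoods, producing reflector circles in the base); it holds precisely because $M$ is orientable, a hypothesis you invoke only in the second step but which is needed already here, and which is implicit in the theorem itself --- without it, the product of the Klein bottle with an interval, a circle bundle over the annulus, would contradict the stated diffeomorphism classification (though not the dimension conclusion). Your closing observation is a genuine bonus absent from the paper: by \cref{sequence:exact:groups:fundamental:seifert}, $\Gamma$ is an extension of a virtually cyclic group $\pi_1^{orb}(B)$ by an infinite cyclic group (infinite since $\partial M\neq\emptyset$ rules out an $S^3$ cover), so $\Gamma$ is virtually $\Z^2$ and $\gd_{\calF_k}(\Gamma)=0$ follows from the criterion in the proof of \cref{lemma:low:dimensions} with no classification of $M$ at all, and with no orientability assumption.
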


Before proving the above theorems, let us set some notation. For $A\in\gldosz$, we say $A$ is

\begin{enumerate}[a)]
    \item \emph{Elliptic}, if it has finite order.
    
    \item \emph{Parabolic}, if it is conjugate to a matrix of the form

\begin{equation}\label{matrix:canonical:type2}
    \begin{pmatrix}
    1&s\\
    0&1
    \end{pmatrix}
    \end{equation}
    for some $s\neq 0$.
    
    \item \emph{Hyperbolic}, if it is not elliptic nor parabolic.
\end{enumerate}

It is well-known that every matrix in $A\in\gldosz$ is either elliptic, parabolic or hyperbolic. Moreover, $A$ is elliptic (resp. parabolic, hyperbolic) if and only if  $A^r$ is elliptic (resp. parabolic, hyperbolic) for all $r\geq 1$.

\begin{theorem}\label{thm:case:plane}
Let $M$ be a  $3$-manifold  with fundamental group  isomorphic  to $\Gamma=\Z^2\rtimes_{\varphi} \Z$ where $\varphi \colon \Z \to \Aut(\Z^2)=\gldosz$ is a homomorphism. Then the following statements hold.
\begin{enumerate}[a)]
\item If $\varphi(1)$ is elliptic, then $\gd_{\calF_2}(\Gamma)=5$. 
\item If $\varphi(1)$  is parabolic, then  $\gd_{\calF_2}(\Gamma)=3$.
\item If $\varphi(1)$ is  hyperbolic, then  $\gd_{\calF_2}(\Gamma)=2$.
\end{enumerate}
\end{theorem}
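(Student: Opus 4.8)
The plan is to exploit the extension $1\to\Z^2\to\Gamma\xrightarrow{q}\Z\to1$ in which $\Z^2$ is the normal fibre $N$ and $q$ realises the monodromy $A=\varphi(1)$, and to run the three cases in parallel: the type of $A$ governs both the geometry of $M$ (elliptic gives $\E^3$, parabolic gives $\nil$, hyperbolic gives $\sol$) and, more importantly for us, the supply of virtually $\Z^2$ subgroups of $\Gamma$. So the first step, common to all three cases, is a structural lemma classifying the subgroups in $\calF_2\setminus\calF_1$: I would show that a rank-$2$ subgroup $H$ with $q(H)\neq0$ forces $A^k$ (for $q(H)=k\Z$) to preserve a rational line of $N$ and act on it by $\pm1$. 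This is impossible when $A$ is hyperbolic, happens only along the fixed eigenline when $A$ is parabolic, and is unconstrained when $A$ is elliptic -- which is exactly why the three answers differ.

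In the hyperbolic case (c) the classification gives that every virtually $\Z^2$ subgroup is contained in the fibre $N$, while every element of $\calF_2$ with nontrivial image in $\Z$ is infinite cyclic. The lower bound $\gd_{\calF_2}(\Gamma)\geq2$ is then immediate from \cref{lemma:low:dimensions}, since a $\sol$-group is finitely generated and is not virtually $\Z^r$ for $r\le2$. For the upper bound I would apply \cref{construction:pusout} to the pair $\calF\subset\calF_2$, where $\calF$ is the family of subgroups of the fibre; a model for $E_{\calF}\Gamma$ is the Bass--Serre line $\mathbb{R}$ of the splitting over $N$, so $\dim E_{\calF}\Gamma=1$. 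The adapted collection is the set of maximal horizontal virtually cyclic subgroups $H$; each satisfies $H\cap N=1$, so $E_{\calF\cap H}H=EH\simeq\mathbb{R}$ and $E_{\calF_2\cap H}H$ is a point. The resulting push-out has dimension $\max\{1,\dim(EH)+1,0\}=2$, giving $\gd_{\calF_2}(\Gamma)\le2$.

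The parabolic case (b) is cleanest because $\Gamma$ is then a $2$-step nilpotent lattice: the eigenline of $A$ is central, giving a central extension $1\to\Z\to\Gamma\xrightarrow{q}\Z^2\to1$ whose commutator pairing is nondegenerate. The key observation is that this nondegeneracy forces $q(H)$ to have rank $\le1$ for every $H\in\calF_2$, whence $\calF_2=q^{-1}(\vcyc)$ is exactly the pull-back of the family $\vcyc$ of $\Z^2$. Pulling back a minimal model for $E_{\vcyc}(\Z^2)$ along $q$ therefore produces a model for $E_{\calF_2}\Gamma$ of the same dimension, so $\gd_{\calF_2}(\Gamma)\le\gd_{\vcyc}(\Z^2)=3$. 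The matching lower bound $\geq3$ I would obtain as in \cref{thm:hyperbolic:case}: apply Mayer--Vietoris to the push-out realising $E_{\calF_2}\Gamma$ and show $H^3_{\calF_2}(\Gamma;\underline{\Z})\neq0$, the nonvanishing coming from $H^3(\Gamma;\underline{\Z})=H^3(M)\neq0$ by Poincaré duality on the closed nilmanifold $M$ together with the vanishing of the cone contributions in degree $3$.

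The elliptic case (a) is the genuine obstacle. Here $A$ has finite order, $\Gamma$ is virtually $\Z^3$ (a flat $3$-manifold group), and the classification no longer limits the rank-$2$ subgroups: every rational plane of $N$, and many rank-$2$ subgroups involving $t$, is a maximal virtually $\Z^2$ subgroup, each of which is commensurated by all of $\Gamma$. Consequently these subgroups are not self-normalising, the adapted-collection hypotheses of \cref{construction:pusout} fail, and there is no quotient by which $\calF_2$ becomes a pull-back; the clean reductions used in (b) and (c) are simply unavailable. My plan is therefore to import the virtually abelian computation of \cite{prytua2018bredon, Nucinkis:Moreno:Pasini}, which yields $\gd_{\calF_2}(\Gamma)=\cd_{\calF_2}(\Gamma)=5$ for a group that is virtually $\Z^3$. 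For a self-contained argument I would instead build $E_{\calF_2}\Gamma$ by iterating the commensurator construction over the poset of rational subspaces -- the dominant stratum contributing $\dim E_{\vcyc}\Gamma+1=5$ -- and prove the lower bound $\cd_{\calF_2}(\Gamma)\ge5$ by a Mayer--Vietoris computation anchored, as above, in Poincaré duality on the flat manifold $M$. Controlling this cohomology computation, and in particular isolating the single nonzero class in degree $5$, is the step I expect to be hardest.
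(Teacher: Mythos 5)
Your cases (a) and (c) are essentially correct. Case (a) is the paper's own route: it simply imports the virtually-$\Z^3$ computation (lower bound from $\gd_{\calF_2}(\Z^3)=5$ via \cite{onorio}, upper bound from \cite{prytua2018bredon}), exactly as in \cref{dg:group:virtually:Z3:F2}, so your fallback plan is unnecessary. Case (c) takes a genuinely different route from the paper: you cone off the maximal horizontal cyclic subgroups in a single application of \cref{construction:pusout} relative to the base family $\SUB(N)$, whose classifying space is the Bass--Serre line, whereas the paper first builds a $3$-dimensional model of $E_{\calF_1}\Gamma$ by L\"uck--Weiermann and then collapses $E_{\calF_1(H)}\Gamma$ onto $E_{\SUB(H)}\Gamma$ via the push-out of \cite{DQR11}; your version avoids $E_{\calF_1}\Gamma$ altogether and the count $\max\{1,1+1,0\}=2$ is cleaner. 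What you must still verify is that your collection is \emph{adapted} in the sense of \cref{ad1}: self-normalization of each maximal horizontal cyclic $C$, and that distinct members meet trivially. Both require the normalizer computation $N_\Gamma(C)\cong\Z$ for $C\not\le N$ (the paper's \cref{subgroups:semi:direct}), which your structural lemma on rank-two subgroups does not supply; given it, adaptedness follows, e.g.\ if $1\neq D\le C_1\cap C_2$ then $C_1,C_2\le N_\Gamma(D)\cong\Z$, forcing $C_1=C_2$ by maximality.

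The genuine gap is the lower bound in case (b). The push-out you want to imitate from \cref{thm:hyperbolic:case} does not exist in the nil case: there every maximal virtually $\Z^2$ subgroup is $q^{-1}(L)$ for some $L\le\Gamma/N\cong\Z^2$, hence is \emph{normal} in $\Gamma$, and any two of them intersect in the central copy of $\Z$; so conditions (a) and (c) of \cref{ad1} fail. This is precisely the obstruction you correctly identify in case (a), and it applies verbatim in case (b). Moreover, Poincar\'e duality cannot be the engine of the nonvanishing under any repair: a closed $\sol$-manifold also satisfies $H^3(M;\Z)\cong\Z$, yet $\gd_{\calF_2}=2$ there by your own part (c) (and \cref{dg:manifold:prime:sol}), so ``$H^3(M)\neq 0$ plus vanishing cone contributions'' cannot imply $H^3_{\calF_2}(\Gamma;\underline{\Z})\neq 0$. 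What is true, and what the paper uses, is this: since $\calF_2$ is the pull-back of the virtually cyclic family of $\Gamma/N\cong\Z^2$, constant-coefficient Bredon cohomology gives $H^3_{\calF_2}(\Gamma;\underline{\Z})\cong H^3\bigl(E_{\calF_1'}(\Gamma/N)/(\Gamma/N);\Z\bigr)$, and this is nonzero by the computation of \cite{Conolly:Frank:Hartglas} for $2$-crystallographic groups. If you insist on Mayer--Vietoris, apply it to the pulled-back L\"uck--Weiermann push-out for $\Z^2$: in degree $3$ it yields $\coker\bigl(H^2(T^2)\to\prod_{[C]}H^2(T^2)\bigr)$, the product running over the infinitely many commensurability classes of infinite cyclic subgroups of $\Z^2$, and this cokernel of a diagonal map into an infinite product is nonzero. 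The nonvanishing thus comes from the infinitude of commensurability classes, not from the fundamental class of $M$.
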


For the proof of \cref{thm:case:plane} we need the following three lemmas.

\begin{lemma}\label{dg:group:virtually:Z3:F2}
Let $\Gamma$ be a  virtually  $\Z^3$ group, then $\gd_{\calF_2}(\Gamma)=5$.  \end{lemma}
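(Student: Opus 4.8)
The plan is to pass to Bredon cohomology and bound $\cd_{\calF_2}(\Gamma)$ from both sides; since a $5$-dimensional model gives $\gd_{\calF_2}(\Gamma)\le 5$ and the inequality $\gd_{\calF_2}(\Gamma)\ge\cd_{\calF_2}(\Gamma)$ always holds, it suffices to prove $\cd_{\calF_2}(\Gamma)\ge 5$ together with the existence of a $5$-dimensional model for $E_{\calF_2}\Gamma$. The one external input I would rely on is the free abelian computation $\cd_{\calF_2}(\Z^3)=\gd_{\calF_2}(\Z^3)=5$, that is, the case $n=3$, $r=2$ of the formula $\cd_{\calF_r}(\Z^n)=n+r$ for $0\le r<n$, which is known from the work on Bredon dimensions of families of virtually abelian subgroups.

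For the lower bound I would fix a finite-index normal subgroup $N\le\Gamma$ with $N\cong\Z^3$ and restrict along $N\hookrightarrow\Gamma$. The restriction functor between the corresponding categories of Bredon modules is exact and preserves projectives, and the restricted family $\calF_2\cap N$ is precisely the family $\calF_2$ of $N\cong\Z^3$ (a subgroup of $\Z^3$ lies in $\calF_2$ exactly when its rank is at most $2$). Hence a projective resolution of $\underline{\Z}$ over $\mathcal{O}_{\calF_2}\Gamma$ restricts to one of the same length over $\mathcal{O}_{\calF_2}N$, so $\cd_{\calF_2}(\Gamma)\ge\cd_{\calF_2}(\Z^3)=5$.

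For the upper bound I would construct a $5$-dimensional model directly from the geometry of $\Gamma$. As $\Gamma$ is virtually $\Z^3$ it acts properly, cocompactly and affinely on $\R^3$ with affine (hence contractible) fixed point sets, so $\R^3$ is a $3$-dimensional model for $E_{\calF_0}\Gamma=E_{\fin}\Gamma$. I would then build $E_{\calF_1}\Gamma$ and $E_{\calF_2}\Gamma$ by two successive applications of the push-out of \cref{construction:pusout}: the first along the maximal virtually cyclic subgroups (which pairwise intersect in finite groups, so the intersection condition of \cref{ad1} holds for the pair $(\calF_0,\calF_1)$), the second along the maximal virtually $\Z^2$ subgroups (which pairwise intersect in virtually cyclic groups, giving the condition for $(\calF_1,\calF_2)$). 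The dimension formula for the push-out shows that at the step $\calF_{r-1}\to\calF_r$ the new model has dimension $\max\{\dim E_{\calF_{r-1}}\Gamma,\ \dim E_{\calF_r\cap C}C,\ \dim E_{\calF_{r-1}\cap C}C+1\}$, where $C$ runs over the relevant stabilizing subgroups; the goal is to see that each step raises the dimension by exactly one, from $3$ to $4$ to $5$.

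The delicate point, and the step I expect to be the main obstacle, is the self-normalizing hypothesis of \cref{ad1} together with the dimension bookkeeping. In a virtually abelian group the normalizer of a maximal virtually $\Z^2$ subgroup is far from being the subgroup itself, so one cannot take the maximal subgroups as the adapted collection as in the hyperbolic case; instead one must pass to their commensurators and the corresponding commensurated families in the style of Lück--Weiermann, and then compute the lower classifying spaces $E_{\calF_{r-1}\cap C}C$ of these commensurators precisely enough to pin the increase at $+1$. Should this direct analysis become unwieldy, an alternative route to the upper bound is to invoke a finite-index comparison theorem for Bredon cohomological dimension to transfer $\cd_{\calF_2}(\Z^3)\le 5$ from $N$ to $\Gamma$, using that $\fin\subseteq\calF_2$.
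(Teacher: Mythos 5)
Your lower bound is sound and is essentially the paper's: the paper restricts a model for $E_{\calF_2}\Gamma$ to a finite-index $\Z^3$ subgroup and invokes $\gd_{\calF_2}(\Z^3)=5$ from \cite{onorio}, while you restrict a projective resolution of $\underline{\Z}$ over the orbit category; both arguments are correct and rest on the same external computation for $\Z^3$. The problem is the upper bound, where your proposal has a genuine gap. Your primary route---two successive push-outs via \cref{construction:pusout}---cannot work as described, because condition (c) of \cref{ad1} (self-normalization) fails in a virtually $\Z^3$ group not only for maximal virtually $\Z^2$ subgroups (as you note) but already at the first step $(\calF_0,\calF_1)$: every infinite subgroup of the finite-index $\Z^3$ is normalized by $\Z^3$, so no candidate collection is adapted to either pair. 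Replacing this by Lück--Weiermann push-outs over commensurators is the right idea, but then the "dimension bookkeeping" you defer is the entire content of the statement: the commensurator $N_\Gamma[H]$ of a maximal virtually $\Z^2$ subgroup can be all of $\Gamma$, so the push-out involves terms $E_{\calF_2[H]}(N_\Gamma[H])$ whose dimension must itself be bounded by $5$, and nothing in your sketch does this. You would in effect be re-proving \cite[Proposition 1.3]{prytua2018bredon}, which is exactly what the paper cites for this inequality.

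Your fallback is worse, because it appeals to a false principle. There is no "finite-index comparison theorem" transferring an upper bound on Bredon (co)homological dimension from a finite-index subgroup to the ambient group for families containing $\fin$: Leary and Nucinkis constructed groups $G$ with a finite-index subgroup $H$ for which $\cdfin(H)<\cdfin(G)$, so $\cd_{\calF}$ can genuinely increase when passing to finite-index overgroups. The inequality $\gd_{\calF_2}(\Gamma)\le 5$ for virtually $\Z^3$ groups is true, but it is a theorem about virtually abelian (crystallographic) groups specifically---precisely Prytuła's result---and cannot be obtained by a formal transfer from $N\cong\Z^3$. To repair your proof, replace both the primary route and the fallback by a citation of \cite[Proposition 1.3]{prytua2018bredon}, or carry out the commensurator analysis for crystallographic groups in full.
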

\begin{proof}
 In  \cite[Proposition A]{onorio} they show that  $\gd_{\calF_2}(\Z^3)= 5$, in consequence   $\gd_{\calF_2}(\Gamma)\ge 5$. On the other hand, in  \cite[Proposition 1.3.]{prytua2018bredon} and \cite[Theorem~6.14]{Mo19} it is proven that $\gd_{\calF_2}(\Gamma)\le 5$. 
Thus $\gd_{\calF_2}(\Gamma)=5$.
\end{proof}

\begin{lemma}\label{lemma:case:hyperbolic}
 Let $\Gamma=\Z^2\rtimes_{\varphi} \Z$ with $\varphi(1)=A$ a parabolic element in $GL_2(\Z)$. Then
 \begin{enumerate}[a)]
     
 \item The matrix $A$ fixes a infinite cyclic subgroup of $\Z^2$. Moreover  the infinite cyclic maximal subgroup $N$ fixed by  $A$ is normal in 
 $\Gamma$ and $\Gamma/N$ is isomorphic to  $\Z^2$ or to $\Z\rtimes \Z$.
 \item Consider the homomorphism $\pi \colon \Gamma \to \Gamma/N$ and let $\calF_1^{\prime}$ be the family of virtyally cyclic subgroups of $\Gamma/N$. Then the pulled-back family  $\tilde{\calF_1^{\prime}}$ of $\calF_1^{\prime}$ is equal to the family $\calF_2$ of $\Gamma$.
 \end{enumerate}
 \end{lemma}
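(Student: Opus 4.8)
The plan is to prove the two parts in order, with part (b) reduced to a computation inside a finite-index nilpotent subgroup.

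For part (a), I would begin from the normal form of a parabolic matrix. A parabolic $A\in\gldosz$ has a repeated real eigenvalue $\lambda=\pm1$ and a single eigendirection, so $A-\lambda I$ has rank $1$ and its kernel meets $\Z^2$ in an infinite cyclic group $N=\langle v\rangle$ with $v$ primitive. This $N$ is the unique maximal cyclic subgroup of $\Z^2$ left invariant (``fixed'' setwise) by $A$. Writing elements of $\Gamma$ as pairs $(w,n)$ with product $(w_1,n_1)(w_2,n_2)=(w_1+A^{n_1}w_2,\,n_1+n_2)$, a one-line conjugation computation gives $(u,m)(w,0)(u,m)^{-1}=(A^m w,0)$, which lies in $N$ whenever $w\in N$; hence $N\trianglelefteq\Gamma$. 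Quotienting the sequence $1\to\Z^2\to\Gamma\to\Z\to1$ by $N$, and using that $v$ is primitive so that $\Z^2/N\cong\Z$, produces an extension $1\to\Z\to\Gamma/N\to\Z\to1$ in which the outer $\Z$ acts on $\Z^2/N\cong\Z$ by $\lambda$ (choosing a basis $\{v,u\}$ one sees $Au\equiv\lambda u$ mod $N$). Therefore $\Gamma/N\cong\Z^2$ when $\lambda=1$ and $\Gamma/N\cong\Z\rtimes\Z$ (the Klein bottle group) when $\lambda=-1$, which is exactly the dichotomy asserted.

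For part (b), recall that the pulled-back family is $\tilde{\calF_1^{\prime}}=\{H\le\Gamma:\pi(H)\text{ is virtually cyclic}\}$, so I must prove $\pi(H)$ virtually cyclic $\iff H\in\calF_2$. The inclusion $\tilde{\calF_1^{\prime}}\subseteq\calF_2$ is formal via Hirsch length: if $\pi(H)$ is virtually cyclic then $H$, being an extension of $\pi(H)$ by $H\cap N\le\Z$, is polycyclic of Hirsch length at most $2$, hence virtually $\Z^r$ for some $r\le2$. For the reverse inclusion, let $H\in\calF_2$. If $H\cap N$ is infinite then $h(\pi(H))\le h(H)-1\le1$ and $\pi(H)$ is virtually cyclic. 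The only remaining case is $H$ virtually $\Z^2$ with $H\cap N=1$, and ruling this out is the heart of the lemma. To do so I would pass to the index-two subgroup $\Gamma_1=\Z^2\rtimes_{A^2}\Z$, on which $A^2$ is unipotent (nontrivial since $A$ has infinite order); thus $\Gamma_1$ is torsion-free $2$-step nilpotent with center exactly $N$ and $\Gamma_1/N\cong\Z^2$. Replacing $H$ by $H\cap\Gamma_1$ (still virtually $\Z^2$, still meeting $N$ trivially), the commutator map $\Lambda^2(\Gamma_1/N)\to N$, $\bar u\wedge\bar v\mapsto[u,v]$, is injective because the Heisenberg pairing is nondegenerate. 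Picking $u,v\in H$ with $\pi(u),\pi(v)$ linearly independent — possible since $\pi(H)$ would have rank $2$ — yields $1\neq[u,v]\in H\cap N$, contradicting $H\cap N=1$. Hence every virtually-$\Z^2$ subgroup meets $N$ infinitely and $\pi(H)$ is virtually cyclic.

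The main obstacle is precisely this last step: a rank-two abelian subgroup cannot avoid the fibre subgroup $N$, equivalently the extension $1\to N\to\Gamma\to\Gamma/N\to1$ does not virtually split. The nilpotent reduction together with the nondegenerate commutator pairing is the cleanest route I see, and the parabolic hypothesis is exactly what forces this nondegeneracy — for an elliptic or hyperbolic $A$ the relevant quotient and pairing would look entirely different, which is why this argument is special to case (b) of \cref{thm:case:plane}.
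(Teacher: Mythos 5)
Your proof is correct. Part (a) and the inclusion $\tilde{\calF_1^{\prime}}\subseteq\calF_2$ run essentially parallel to the paper's argument (canonical form, the conjugation computation $(u,m)(w,0)(u,m)^{-1}=(A^m w,0)$, and an extension analysis that the paper does by hand and you do via Hirsch length), but your treatment of the key step in part (b) --- showing a virtually $\Z^2$ subgroup cannot meet $N$ trivially --- is genuinely different. The paper argues: if $T\cong\Z^2$ and $T\cap N=1$, then since $N$ is central (its parabolics are unipotent) the subgroup $\langle N,T\rangle$ is isomorphic to $\Z^3$, whence $\gd_{\calF_1}(\Gamma)\ge\gd_{\calF_1}(\Z^3)=4$, contradicting the computation $\gd_{\calF_1}(\Gamma)=3$ of Joecken--Lafont--S\'anchez Salda\~na; this is short but imports nontrivial classifying-space machinery and an external dimension computation. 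You instead pass to the index-two subgroup $\Gamma_1=\Z^2\rtimes_{A^2}\Z$, which is torsion-free $2$-step nilpotent with $[\Gamma_1,\Gamma_1]\subseteq N$ and $N$ central in $\Gamma_1$, and observe that the induced pairing $\Lambda^2(\Gamma_1/N)\to N$ is injective because $\Gamma_1$ is nonabelian ($A^2\neq I$); two elements of $H$ with independent images mod $N$ then have a nontrivial commutator lying in $H\cap N$. This is self-contained and purely group-theoretic, and it has the added virtue of working when $A$ has repeated eigenvalue $-1$, where $N$ is normal but not central in $\Gamma$ (so the paper's $\Z^3$ construction as written would not apply) --- which is precisely the case that produces the quotient $\Z\rtimes\Z$ appearing in the statement. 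Two cosmetic points you should patch: virtually cyclic groups are not literally polycyclic in general (your Hirsch-length argument is fine here only because every subgroup of the poly-$\Z$ group $\Gamma$ is poly-$\Z$), and in the reverse inclusion you silently skip the case $H\cap N=1$ with $H$ virtually cyclic, where one should say that $\pi|_H$ is then injective so $\pi(H)\cong H$ is virtually cyclic; both are one-line fixes.
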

 
 \begin{proof}First we prove part  $a)$.
Since $A$ is parabolic, without loss of generality we can assume that $A$ is a matrix of the form \eqref{matrix:canonical:type2}. Therefore, $A$ fixes a maximal infinite cyclic subgroup $N$ of $\Z^2$. It is easy to see that  $N$ lies in the center of $\Gamma$, and it follows that $N$ is a normal subgroup of $\Gamma$. Note that  $\Gamma/N=(\Z^2/N)\rtimes \Z$  is isomorphic to $\Z^2$ or to $\Z\rtimes \Z$.

Now we proof part $b)$. First we show the inclusion   $\tilde{\calF_1^{\prime}}\subseteq \calF_2$. Note that both $\Gamma$ and $\Gamma/N$ are torsion-free groups, and in consequence every virtually cyclic subgroup of them is either trivial or infinite cyclic. By definition of $\tilde{\calF_1^{\prime}}$ it is enough to prove that $\pi^{-1}(L)\in \calF_2$ for every infinite cyclic subgroup $L$ of $\Gamma/N$. Let $L$ be an infinite cyclic subgroup of $\Gamma/N$,  hence $\pi^{-1}(L)$ fits in the following short exact sequence
 \begin{equation*}
    1\to \Z\to \pi^{-1}(L)\to \Z \to 1 \end{equation*}
and we conclude that $\pi^{-1}(L)$ is isomorphic to $\Z^2$ or to $\Z\rtimes \Z$. In either case   $\pi^{-1}(L)$  is virtually $\Z^2$.

Now we show the inclusion $\calF_2\subseteq\tilde{\calF_1'}$. Let $L\in \calF_2$, we have three cases:  $L$ is trivial, $L$ is infinite cyclic, or $L$ is virtually $\Z^2$. If $L$ trivial, there is nothing to prove. If  $L$ is isomorphic to  $\Z$, then $\pi(L)$ is trivial or infinite cyclic, and therefore $\pi(L)\in \mathcal{F}_1^{\prime}$. Thus  $L\subseteq \pi^{-1}(\pi(L))\in\tilde{\calF_1^{\prime}}$. Finally,  suppose that $L$ is virtually  $\Z^2$, i.e there is a finite index subgroup  $T<L$ such that  $T$ is isomorphic $\Z^2$. We claim that  $T \cap N $ is non-trivial. Suppose $T \cap N=1 $, since $N$ is a subgroup of the center of $\Gamma$, a generator  $h$ of  $N$ commutes with generators $\alpha$ and  $\beta$ of  $T$,  thus the subgroup generated by $h,\alpha, \beta$ is isomorphic to $\Z^3$ and by \cite[Theorem 5.13 (iii)]{luck:Weiermann} this subgroup has $\mathcal{F}_1$-dimension equal to 4. In consequence  $\gd_{\mathcal{F}_1}(\Gamma)\ge 4$, but this contradicts the fact that $\gd_{\mathcal{F}_1}(\Gamma)=3$ (see \cite[Proposition 5.4]{Luis:Lafon:Joeken}).  From our claim since $\Gamma/N$ is torsion-free, we can see that  $\pi(T)$ is infinite cyclic. Since  $\pi(T)$ is of finite index in   $\pi(L)$, it follows that  $\pi(L)$ is virtually $\Z$, then $\pi(L)\in \calF_1'$. Therefore   $L<\pi
^{-1}(\pi(L))\in \tilde{\calF_1'}$. 
 \end{proof}
 
\begin{lemma}\label{subgroups:semi:direct}Let $\Gamma=\Z^2\rtimes_{\varphi} \Z$ with $\varphi(1)=A$ a hyperbolic element in $GL_2(\Z)$. Let $H$ be the subgroup $\Z^2\rtimes\{0\}$  of $\Gamma$. Then
\begin{enumerate}[a)]
    \item Every subgroup of $\Gamma$ isomorphic to $\Z^2$ is a subgroup of  $H$. In particular, the family  $\calF_2=\calF_1\cup \SUB(H)$, where  $\SUB(H)$ is the family of all subgroups of  $H$.
  \item Let $C$ be an infinite cyclic subgroup of  $\Gamma$, then
  
  \[
  N_\Gamma C\cong 
  \begin{cases}
   \dbZ & \text{ if } C\not\le H\\
   \dbZ^2 & \text{if } C\le H
  \end{cases}
  \]
\end{enumerate}
 \end{lemma}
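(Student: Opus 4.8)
The plan rests on one property of a hyperbolic matrix $A$: its eigenvalues $\lambda,\mu$ are real of absolute value different from $1$ (one of absolute value $>1$ and, since $\lambda\mu=\det A=\pm1$, one of absolute value $<1$). Hence for every $n\neq0$ the eigenvalues $\lambda^n,\mu^n$ of $A^n$ avoid $\{1,-1\}$, so both $A^n-I$ and $A^n+I$ are invertible over $\dbQ$; equivalently $A^n$ fixes no nonzero vector and sends no nonzero vector to its negative. I would record this at the outset, as it is the only place where hyperbolicity (rather than the elliptic or parabolic alternatives) enters. I also fix notation: elements of $\Gamma$ are pairs $(v,n)\in\dbZ^2\rtimes_\varphi\dbZ$ with $(v,n)(w,m)=(v+A^nw,\,n+m)$, the projection $p\colon\Gamma\to\Gamma/H\cong\dbZ$ sends $(v,n)\mapsto n$, the group $\Gamma$ is torsion free, and a direct computation yields the conjugation formula
\[
(w,n)\,(u,m)\,(w,n)^{-1}=\big((I-A^m)w+A^nu,\ m\big).
\]

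For part (a), suppose $S\leq\Gamma$ is isomorphic to $\dbZ^2$ with $S\not\leq H$. Then $p(S)\leq\dbZ$ is nontrivial, so from $1\to S\cap H\to S\to p(S)\to1$ the intersection $S\cap H$ has rank $1$; write $S\cap H=\langle(u,0)\rangle$ with $u\neq0$ and choose $(v,d)\in S$ with $d\neq0$. As $S$ is abelian, $(v,d)$ commutes with $(u,0)$, and the conjugation formula (with $m=0$) gives $A^du=u$, forcing $u=0$ because $A^d-I$ is invertible, a contradiction. Thus every $\dbZ^2\leq\Gamma$ lies in $H$. For the displayed equality of families, $\calF_1\cup\SUB(H)\subseteq\calF_2$ is clear; conversely, if $L\in\calF_2$ is not virtually cyclic it is virtually $\dbZ^2$, hence contains a finite-index $\dbZ^2$, which lies in $H$ by what we just proved. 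Then $L\cap H$ has finite index in $L$, while $L/(L\cap H)\cong p(L)\leq\dbZ$ is finite and torsion free, thus trivial; therefore $L\leq H$, i.e. $L\in\SUB(H)$.

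For part (b), I split according to whether $C\leq H$. If $C=\langle(u,0)\rangle$ with $u\neq0$, then by the conjugation formula $(w,n)$ normalizes $C$ exactly when $A^nu=\pm u$; since $A^n-I$ and $A^n+I$ are invertible for $n\neq0$, this forces $n=0$, and every element of $H$ does centralize $(u,0)$, so $N_\Gamma C=H\cong\dbZ^2$. If instead $C=\langle(v,d)\rangle$ with $d\neq0$, I would first observe that $(w,n)$ cannot conjugate $(v,d)$ to its inverse, since the $\dbZ$-coordinate of $(w,n)(v,d)(w,n)^{-1}$ equals $d\neq-d$; hence $(w,n)\in N_\Gamma C$ forces the first-coordinate equation $(I-A^d)w=(I-A^n)v$. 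Taking $n=0$ shows $N_\Gamma C\cap H=\{(w,0):(I-A^d)w=0\}=\{1\}$ as $A^d-I$ is invertible. Therefore $p$ restricts to an injection $N_\Gamma C\hookrightarrow\dbZ$, so $N_\Gamma C$ is trivial or infinite cyclic; as it contains $C$ it is infinite cyclic, i.e. $N_\Gamma C\cong\dbZ$.

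Everything reduces to elementary linear algebra once the conjugation formula is in place, so I do not anticipate a serious obstacle; the one point demanding care, and the genuine crux, is the uniform invertibility of $A^n\pm I$ for $n\neq0$, which is exactly the input distinguishing the hyperbolic case from the parabolic one (where $A^n-I$ becomes singular) and the elliptic one. This is where the hypothesis must be read in its Anosov sense, namely that the eigenvalues of $A$ lie off the unit circle. A secondary point is the verification in part (b), second case, that a normalizing element cannot invert a generator of $C$, which is what collapses the normalizer onto its image in $\dbZ$.
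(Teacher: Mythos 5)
Your proof is correct, and its engine --- the explicit conjugation formula $(w,n)(u,m)(w,n)^{-1}=((I-A^m)w+A^nu,\,m)$ together with the invertibility of $A^n\pm I$ over $\dbQ$ for $n\neq 0$ --- is the same one driving the paper's proof, so this is essentially the same approach. There are, however, two points where your write-up is more careful than the paper, and they are worth recording. First, in part (b), case $C\not\le H$, the paper asserts that the elements of such a $C$ have the form $((0,0),l)$ and only computes the normalizer of $\langle((0,0),l)\rangle$; this reduction is unjustified, and in fact fails in general even up to conjugacy: for $A=\bigl(\begin{smallmatrix}2&1\\1&1\end{smallmatrix}\bigr)$ the subgroup generated by $((1,0),2)$ is not conjugate to $\langle((0,0),2)\rangle$, since $\det(I-A^2)=-5$ and $(1,0)\notin(I-A^2)\dbZ^2$, so the first-coordinate equation $(I-A^2)w=-A^n(1,0)$ has no integral solution. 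Your argument --- showing $N_\Gamma C\cap H=1$ from $(I-A^d)w=0$ and then embedding $N_\Gamma C$ into $\dbZ$ via the projection --- handles an arbitrary generator $(v,d)$ and thus closes this gap while proving the same statement. Second, your insistence on reading ``hyperbolic'' in the Anosov sense (eigenvalues off the unit circle) is exactly the right reading: the paper's literal trichotomy would misclassify a matrix such as $-\bigl(\begin{smallmatrix}1&1\\0&1\end{smallmatrix}\bigr)$ as hyperbolic, and both your proof and the paper's need the spectral fact that no power of $A$ fixes or negates a nonzero vector. Finally, in part (a) your direct commutation computation ($A^du=u$ forces $u=0$) is a cleaner version of the paper's argument that $A^r$ fixing an infinite cyclic subgroup contradicts hyperbolicity (the paper concludes such an $A^r$ would be ``elliptic,'' which should read ``not hyperbolic,'' as parabolic matrices also fix vectors), but the idea is identical.
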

 \begin{proof}
First we proof the part  $a)$ by  contradiction. Suppose that there is  $L$  a subgroup of $\Gamma$ isomorphic to  $\Z^2$ that is not subgroup of $H$.  From  the short exact sequence   
\begin{equation}\label{sequence:exact:case3}
    1\to  H\to \Gamma\xrightarrow{\psi} \Z \to 1 \end{equation}
we obtain the following short exact sequence   
 \begin{equation*}\label{sequence:exact:case3:r}
    1\to L\cap H\to L\to \psi(L) \to 1 \end{equation*}
Since  $L$ is not a subgroup of $H=\ker \psi$, we have that  $\psi(L)$ is non-trivial, then  $\psi(L)=\langle r\Z\rangle$ with  $r\neq 0$. Since  $L$ is isomorphic to  $\Z^2$ and $\psi(L)=\langle r\Z\rangle$,  it follows that  $L\cap H$ is isomorphic to  $\Z$, in consequence   $L=(L\cap H)\times \langle r\Z\rangle$. Thus $A^r$ fixes an infinite cyclic subgroup of $H\cong \Z^2$, and therefore  $A^r$ is parabolic, which implies that $A$ is parabolic
. This is a contradiction since we were assuming that $A$ is hyperbolic.

Now we going to show that  $\calF_2=\calF_1\cup \SUB(H)$. It is clear that $\calF_1\cup \SUB(H) \subseteq \calF_2$. It only remains to prove that   $\calF_2\subseteq \calF_1\cup \SUB(H)$. Let  $L\in \calF_2$ then  $L$ is either virtually cyclic or $L$  is virtually  $\Z^2$. If  $L$ is virtually cyclic, by definition  $L\in \calF_1\subseteq  \calF_1\cup \SUB(H)$. Now suppose that $L$ is virtually $\Z^2$. From \eqref{sequence:exact:case3} we get the following  short exact sequence  
 \begin{equation}\label{sequence:exact:case:plane}
    1\to L\cap H\to L\to \psi(L) \to 1. \end{equation}
Since $L$ is virtually  $\Z^2$ and, by part a),  $H$ contains all the subgroups of $\Gamma$ isomorphic to $\Z^2$ we have that $L\cap \Z^2$ is isomorphic to  $\Z^2$. Then \eqref{sequence:exact:case:plane} is equivalent to
\begin{equation*}\label{10}
    1\to \Z^2\to L\to \psi(L) \to 1. \end{equation*}
Once more, since $L$ is virtually $\Z^2$ we conclude that $\psi(L)$ must be trivial. It follows that $L\in \SUB(H)\subseteq \calF_1\cup \SUB(H)$.

Now we prove  part  $b)$. First suppose  $C\not\le H$, then the elements  of $C$ are the form $((0,0),l)$. Let $((x,y),w)\in N_{\Gamma}C$, then
     \[
     \begin{split}
     ((x,y),w) ((0,0),l)((x,y),w)^{-1}
    =&((x,y)+A^{w}(-x,-y),l)
     \end{split}
     \]
It follows that 
    $A^{w}(-x,-y)= (-x,-y)$. By hypothesis $A$ is hyperbolic, thus $A^{w}$ is hyperbolic, therefore $(-x,-y) =(0,0)$.
We conclude that  $N_{\Gamma}C=C\cong\Z$.  
    
Now suppose that   $C\le H$. From \eqref{sequence:exact:case3} we have the following short exact sequence 
\begin{equation*}
    1\to N_{\Gamma}C\cap H\to N_{\Gamma}C\to \psi(N_{\Gamma}C) \to 1 \end{equation*}
by hypothesis $C \le H$, then $H<N_{\Gamma}C$, thus the sequence above is equivalent to \begin{equation}\label{11}
    1\to  H\to N_{\Gamma}C\to \psi(N_{\Gamma}C) \to 1 \end{equation}
We are going to show that  $ \psi(N_{\Gamma}C)=0$, for this is enough to see that $N_{\Gamma}C$ does not contain elements of the form $((a,b),l)$ with $l\neq 0$. Suppose that $C$ is generated by $((x,y),0)$, then 
\[
     \begin{split}
     ((a,b),l)((x,y),0)((a,b),l)^{-1}=&(A^{l}(x,y),0)
     \end{split}
     \]
It follows that $A^{l}(x,y)=\pm (x,y)$. But we are assuming that $A$ is hyperbolic which implies that $A^{l}$ is hyperbolic. Therefore $\psi(N_{\Gamma}C)=0$. We conclude of \eqref{11} that $N_\Gamma C\cong \Z^2$. 
\end{proof}

\begin{proof}[Proof of \cref{thm:case:plane}]
$a)$ We claim that $\Gamma$ is virtually $\Z^3$. By hypothesis $A$ is elliptic, then $A$ has finite order. Let  $n$ be  the order of $A$, then the subgroup $\{0\}\rtimes \langle n\Z \rangle$  acts trivially on the factor $\Z^2$. Therefore $\Z^2\rtimes \langle n\Z \rangle= \Z^2\times \langle n\Z \rangle$ is a finite index subgroup of  $\Z^2\rtimes_{\varphi} \Z$ isomorphic to   $\Z^3$. Now the claim follows from \cref{dg:group:virtually:Z3:F2}.

$b)$ First  we construct a model for $E_{\calF_2}\Gamma$ of dimension  3, and as a consequence we get $\gd_{\calF_2}(\Gamma) \le 3$.  By \cref{lemma:case:hyperbolic} part $a)$ the matrix  $A$ fixes a maximal  infinite cyclic subgroup $N$   and  $\Gamma/N=(\Z^2/N)\rtimes \Z$  is isomorphic to either $\Z^2$ or  $\Z\rtimes \Z$. In both cases $\Gamma/N$ is 2-crystallographic group, then by \cite{Conolly:Frank:Hartglas} there is a   3-dimensional model $Y$ for $E_{\calF_1
^{\prime}}(\Gamma/N)$ where $\calF_1^{\prime}$ is the family of  virtually cyclic subgroups of  $\Gamma/N$. By  \cref{lemma:case:hyperbolic} part $b)$ we have  $\tilde{\calF_1^{\prime}}=\calF_2$, then   $Y$ is also a model for  $E_{\calF_2}\Gamma$ with the $\Gamma$-action induced by  $\pi\colon \Gamma\to\Gamma/N$. 

Now we show  $\gd_{\calF_2}(\Gamma)\ge 3$. It is enough to prove that  $H_{\calF_2}^{3}(\Gamma, \underline{\Z})\neq 0$. Let $Y$ be the model for  $E_{\calF_2}\Gamma$ that we  mentioned in the previous paragraph, then 

\[\begin{split}
    H_{\calF_2}^{3}(\Gamma; \underline{\Z})&=  H^{3}(Y/\Gamma; \Z)\\
    &=H^{3}(Y/(\Gamma/N); \Z).
    \end{split}
\]
The latter homology group was shown to be nonzero in \cite[p.8 proof Theorem 1.1]{Conolly:Frank:Hartglas}.

$c)$ In order to prove  $\gd_{\calF_2}(\Gamma)\le 2$, we construct a model for $E_{\calF_2}\Gamma$ of dimension 2. Let $H$ denote $\Z^2\rtimes \{0\}$. By \cref{subgroups:semi:direct} part $a)$ the family  $\calF_2$ of $\Gamma$ is the union  $\calF_2=\calF_1\cup \SUB(H)$ where  $\SUB(H)$ is the family of all subgroups of $H$. By \cite[Lemma 4.4]{DQR11} the space $X$ given by the following \emph{homotopy} $\Gamma$-push-out  is a model for $E_{\calF_2}\Gamma$
 \begin{equation}\label{350}
 \xymatrix{E_{\calF_1(H)}\Gamma\ar[d]^g \ar[r]^f  & E_{\calF_1}\Gamma\ar[d]\\
E_{\SUB(H)}\Gamma \ar[r] & X
}
\end{equation}
where $\calF_1(H)=\SUB(H)\cap \calF_1$. Here the maps $g$ and $f$ are the unique (up to $\Gamma$-homotopy) $\Gamma$-maps given by the inclusion of families $\calF_1(H)\subseteq \calF_1$ and $\calF_1(H)\subseteq \SUB(H)$. We claim that, with suitable choices, $X$ is 2-dimensional.Let us explain first the idea of the construction. First note that $H$ is a normal subgroup of $\Gamma$,  $E(\Gamma/N)=E\Z=\R$ is a model for $E_{\SUB(H)}\Gamma$, see for example the proof of Corollary 2.10 in \cite{luck:Weiermann}. Next, we will show that there is a 3-dimensional model $Y$ for $E_{\calF_1}\Gamma$ such that 
\begin{enumerate}
    \item $Y$ is the union of two $\Gamma$-subcomplexes $Y_1$ and $Y_2$.
    
    \item Every 3-cell in $Y$ belongs to $Y_2$.
    
    \item $Y_2$ is model for $E_{\calF_1(H)}\Gamma$.
    \item The map $f$ is the inclusion $Y_2\to Y$.
\end{enumerate}
Since $f$ is an inclusion, then the homotopy $\Gamma$-push-out \eqref{350} can be replaced by an honest $\Gamma$-pushout, see for example \cite[Theorem~1.1]{Wa80}. That is, we take $X=Y\cup_g \mathbb R=Y\sqcup \mathbb R/\sim$, where we identify $x\sim g(x)$ for all $x\in Y_2$. By (1)-(4), we can see that every 3-cell of $Y$ is being collapsed to a 1-dimensional space via $g$, hence we get that $X$ is a complex of dimension less than or equal to 2. From the explicit construction we actually conclude $X$ is of dimension 2.

Let us construct $Y$, $Y_1$ and $Y_2$.  Since $\Gamma$ is a torsion-free poly-$\Z$ group, by  \cite[Lemma 5.15 and Theorem 2.3]{luck:Weiermann} we get a model $Y$ for $E_{\calF_1} \Gamma$ via the following $\Gamma$-push-out 
\begin{equation}\label{diagram}
 \xymatrix{\displaystyle\bigsqcup_{C\in \I}\Gamma\times_{N_\Gamma C} EN_{\Gamma}C\ar[d] \ar[r]  & E\Gamma\ar[d]\\
\displaystyle\bigsqcup_{C\in \I}\Gamma\times_{N_\Gamma C} E W_{\Gamma}C \ar[r] & Y
}
\end{equation}
where  $\I$ is a set of   representatives of commensuration classes in $\calF_1-\calF_0$. To construct the aforementioned $\Gamma$-pushout we use a construction analogue to the one described in \cref{conditions:exist:pushout}, that is we replace $E\Gamma$ with the mapping cylinder of the top horizontal arrow.

By \cref{subgroups:semi:direct}  part $b
)$ the set $\I$ is the disjoint union of $\I_1=\{C\in \I | N_{\Gamma}C\cong\Z \}=\{C\in \I | C\not\le \Z^2\rtimes \{0\} \}$ and $\I_2=\{C\in \I | N_{\Gamma}C\cong\Z^2\}=\{C\in \I | C\le \Z^2\rtimes \{0\} \}$. 
Therefore, the  $\Gamma$-push-out \eqref{diagram}  can be written as 
\begin{equation}\label{pushout:space:classifying:F1}
 \xymatrix{\left(\displaystyle\bigsqcup_{C\in \I_1}\Gamma\times_{N_\Gamma C} EN_{\Gamma}C\right)\displaystyle\bigsqcup\left(\displaystyle\bigsqcup_{C\in \I_2}\Gamma\times_{N_\Gamma C} EN_{\Gamma}C\right)\ar[d] \ar[r]  & E\Gamma\ar[d]\\
\left(\displaystyle\bigsqcup_{C\in \I_1}\Gamma\times_{N_\Gamma C} EW_{\Gamma}C\right) \displaystyle\bigsqcup\left(\displaystyle\bigsqcup_{C\in \I_2}\Gamma\times_{N_\Gamma C} EW_{\Gamma}C\right)  \ar[r] & Y
}
\end{equation}

Define $Y_1$ and $Y_2$ via the following $\Gamma$-push-outs

\begin{equation*}
 \xymatrix{\displaystyle\bigsqcup_{C\in I_1}\Gamma\times_{N_\Gamma C} EN_{\Gamma}C\ar[d] \ar[r]  & E\Gamma\ar[d]\\
\displaystyle\bigsqcup_{C\in I_1}\Gamma\times_{N_\Gamma C} EW_{\Gamma}C  \ar[r] & Y_1 
}\quad\quad\quad\quad\quad
\xymatrix{\displaystyle\bigsqcup_{C\in I_2}\Gamma\times_{N_\Gamma C} EN_{\Gamma}C\ar[d] \ar[r]  & E\Gamma\ar[d]\\
\displaystyle\bigsqcup_{C\in I_2}\Gamma\times_{N_\Gamma C} EW_{\Gamma}C  \ar[r] & Y_2 
}
\end{equation*}
We used the cylinder construction described in  \cref{conditions:exist:pushout} to construct all the pushouts above.
Clearly $Y=Y_1\cup Y_2$, and $Y_1\cap Y_2=E\Gamma$. Since $\calF_1(H)$ is the family of all virtually cyclic subgroups of $H$ and $I_2$ is set of representativs of commensuration classes of infinite cyclic subgroups in $H$, we can  use   \cite[Lemma 5.15 and Theorem 2.3]{luck:Weiermann} to prove that $Y_2$ is a model for $E_{\calF_1(H)}\Gamma$.

We note the following:
\begin{enumerate}[i)]
    \item If $C\in \I_1$ then a model for  $EN_{\Gamma}C\cong E\dbZ$ is $\mathbb{R}$. Thus the cylinder associated to $EN_{\Gamma}C\cong E\dbZ$  contributes to $E_{\calF_1}\Gamma$ with a subspace of dimension 2. 
    \item If $C\in \I_2$ then a model for  $EN_{\Gamma}C\cong E\dbZ^2$ is $\mathbb{R}^2$. Thus the cylinder associated to $EN_{\Gamma}C$ contributes to $E_{\calF_1}\Gamma$ with a subspace of dimension 3.
    \item By \cite[Theorem 5.13]{luck:Weiermann} $E\Gamma$ has a model of dimension 3.
\end{enumerate}  

As a consequence of the above observations, we conclude that every 3-cell of $Y$ belongs to $Y_2$. This concludes the construction of a 2-dimensional model for $E_{\calF_2}\Gamma$. 

Clearly $\Gamma$ is finitely generated and $\Gamma\notin\calF_2$, therefore by \cref{lemma:low:dimensions}, $\gd_{\calF_2}(\Gamma)\geq 2$.
\end{proof}

\begin{proof}[Proof of \cref{thm:seifer:base:flat}]
The first conclusion follows from  \cite[Theorem 5.3]{Sc83}. For part $a)$, we have that  $\Gamma$ is 3-crystallographic, hence $\Gamma$ is virtually $\Z^3$. Hence part $a)$ follows from \cref{dg:group:virtually:Z3:F2}.

 For part  $b)$, we have that  $\Gamma$ is  torsion-free as it is the fundamental group of a spherical manifold, then by \cite[Theorem N]{Charles:Thomas} we obtain a short exact sequence  
 $$1\to  \Z^2\to \Gamma \to \Z \to 1 $$
where the morphism  $\varphi\colon \Z \to \Aut(\Z^2)$ is such that $\varphi(1)=A$ is parabolic. Hence by the  \cref{thm:case:plane} part  $b)$  we have  $\gd_{\calF_2}(\Gamma)=3$.
\end{proof}

\begin{proof}[Proof of \cref{thm:seifer:base:flat:bondary}]
By  \cite[Theorem 1.2.2]{John:W:Morgan} $M$ is modelled on  $\E^3$, $\nil$ or  $M$ is diffeomorphic to  $T^2\times I$ or the twisted $I$-bundle over the  Klein bottle. The 3-manifold $M$ cannot be modelled on  $\E^3$ and $\nil$ as any such manifold must have empty boundary, see \cite[p. 60, paragraph 2 and last paragraph]{John:W:Morgan}. We conclude that $M$ is diffeomorphic to $T^2\times I$ or to the  twisted $I$-bundle over the  Klein bottle. The fundamental groups of  $T^2\times I $ and the   twisted $I$-bundle over the  Klein bottle are isomorphic to  $\Z^2$ and  $\Z\rtimes\Z$ respectively. In either case $\Gamma$ is virtually $\Z^2$ and therefore  $\gd_{\calF_2}(\Gamma)=0$.
\end{proof}

\section{The Sol case}\label{section:sol}

In this section we compute the $\calF_2$-dimension of manifolds modelled on $\sol$. This computation is relevant in the statement and proof of \cref{geometric:dimension:prime}.

\begin{proposition}\label{dg:manifold:prime:sol}
Let $M$ be a connected closed  3-manifold  modelled on  $\sol$ with fundamental group $\Gamma$. Then $\gd_{\calF_2}(\Gamma)=2$.
\end{proposition}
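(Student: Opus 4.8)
For a closed oriented $\sol$-manifold the plan is to prove the two inequalities $\gd_{\calF_2}(\Gamma)\ge 2$ and $\gd_{\calF_2}(\Gamma)\le 2$ separately, organizing the upper bound around the two topological types of such manifolds. The lower bound is immediate: $\Gamma$ is finitely generated, being the fundamental group of a compact manifold, and it contains a finite-index subgroup isomorphic to $\dbZ^2\rtimes_A\dbZ$ with $A$ an Anosov (hence, in the terminology preceding \cref{thm:case:plane}, hyperbolic) matrix. Such a group has exponential growth and so is not virtually $\dbZ^r$ for $r=1,2$; thus the hypotheses of \cref{lemma:low:dimensions} with $n=2$ hold and $\gd_{\calF_2}(\Gamma)\ge 2$.

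For the upper bound I would first recall (see \cite{Sc83}) that every closed oriented $\sol$-manifold is either a torus bundle over $S^1$ with Anosov monodromy or the union of two twisted $I$-bundles over the Klein bottle glued along their boundary tori, so that $\Gamma$ sits in a short exact sequence $1\to\dbZ^2\to\Gamma\to Q\to 1$ with $Q\cong\dbZ$ in the first case and $Q\cong D_\infty$ in the second. In the torus-bundle case $\Gamma\cong\dbZ^2\rtimes_A\dbZ$ with $A$ hyperbolic as a matrix, so \cref{thm:case:plane}(c) applies verbatim and gives $\gd_{\calF_2}(\Gamma)=2$ with no further work.

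The remaining, genuinely harder, case is the semi-bundle, where $\Gamma=V_1\ast_{C}V_2$ is an amalgam of two Klein-bottle groups $V_i\cong\dbZ\rtimes\dbZ$ over their common $\dbZ^2$-subgroup $C$. Here my first task is the structural claim, analogous to \cref{subgroups:semi:direct}(a), that every virtually $\dbZ^2$ subgroup of $\Gamma$ is conjugate into a vertex group $V_i$: a virtually $\dbZ^2$ subgroup with infinite image in $Q\cong D_\infty$ would contain an element projecting to a translation, which acts on $C$ by a power of the Anosov matrix and hence fixes no nonzero vector, contradicting commutativity inside the subgroup. Consequently $\calF_2=\mathcal V_0\cup\{\text{infinite cyclic subgroups not in }\mathcal V_0\}$, where $\mathcal V_0$ denotes the family of subgroups of conjugates of $V_1,V_2$. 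Since $\mathcal V_0$ is exactly the family of isotropy groups of the Bass--Serre tree $T$ of the splitting, and $T\cong\dbR$ is contractible, $T$ is a one-dimensional model for $E_{\mathcal V_0}\Gamma$. I would then pass from $\mathcal V_0$ to $\calF_2$ by a single application of \cref{construction:pusout} to the pair $(\mathcal V_0,\calF_2)$, with adapted collection $\calA$ the set of maximal infinite cyclic subgroups acting hyperbolically on $T$. For $M=\langle g\rangle\in\calA$ one has $\mathcal V_0\cap M=\{1\}$, so the upper-left corner contributes $E_{\{1\}}M\simeq\dbR$ (dimension $1$) while the lower-left corner contributes $E_{\calF_2\cap M}M\simeq\ast$ (dimension $0$); gluing these to the one-dimensional $T=E_{\mathcal V_0}\Gamma$ produces a model of dimension $\max\{1,1+1,0\}=2$, whence $\gd_{\calF_2}(\Gamma)\le 2$.

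The main obstacle is verifying that $\calA$ is indeed adapted to $(\mathcal V_0,\calF_2)$ in the sense of \cref{ad1}. Using torsion-freeness of $\Gamma$, both the self-normalizing condition and the condition that distinct members of $\calA$ meet inside $\mathcal V_0$ reduce to the same hyperbolicity input: an element $h$ normalizing $M=\langle g\rangle$ must either centralize $g$ (impossible for a hyperbolic element with nontrivial $D_\infty$-image unless $h$ lies in the index-two torus-bundle subgroup $\Gamma_0$, where the normalizer is already $M$ by \cref{subgroups:semi:direct}(b)) or invert it, in which case $\langle g,h^2\rangle$ would be a $\dbZ^2$ with infinite image in $D_\infty$, contradicting the structural claim; and a nontrivial intersection of two maximal hyperbolic cyclics would lie in the virtually cyclic normalizer of a common hyperbolic element, forcing the two to coincide. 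I expect that checking these conditions—together with the fact that no $\dbZ^2$ escapes the vertex groups—is precisely where the shared edge group $C$ must be controlled, and it is the reason a naive push-out adapted to $(\calF_1,\calF_2)$ along maximal virtually $\dbZ^2$ subgroups fails and the Bass--Serre tree is needed instead.
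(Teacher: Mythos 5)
Your plan is correct, and in the semi-bundle case it takes a genuinely different route from the paper's. The paper proves \cref{lemma:case:sol}, decomposes $\calF_2=\calF_1\cup\calF(K_1,K_2)$, and then applies the push-out of \cite[Lemma 4.4]{DQR11}, building the corners $E_{\calF_1}\Gamma$ and $E_{\calF_1(K_1,K_2)}\Gamma$ by $3$-dimensional L\"uck--Weiermann constructions exactly as in \cref{thm:case:plane}(c), recovering dimension $2$ only after collapsing one model inside the other (a step the paper leaves as ``a minor variation'' for the reader). You instead exploit that the Bass--Serre tree of $K_1\ast_C K_2$ is a simplicial line (both edge inclusions have index $2$), that this line is a $1$-dimensional model for $E_{\mathcal V_0}\Gamma$ where $\mathcal V_0$ is the family of subgroups of conjugates of the vertex groups, and then pass from $\mathcal V_0$ to $\calF_2$ in a single application of \cref{construction:pusout}, coning off the maximal hyperbolic cyclic subgroups. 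This never requires a model for $E_{\calF_1}\Gamma$ at all; it is close in spirit to \cref{building:model:apply:haefliger}, adapted by hand to a splitting that is \emph{not} acylindrical (the edge group $C$ is normal and infinite), and your remark about why the naive adapted collection for $(\calF_1,\calF_2)$ fails—distinct conjugates of $K_1,K_2$ meet in $C\cong\Z^2\notin\calF_1$—is exactly right. What the paper's route buys is reuse of the machinery already set up for \cref{thm:case:plane}(c); what yours buys is a shorter, self-contained dimension count $\max\{1,1+1,0\}=2$. Your lower bound (growth plus \cref{lemma:low:dimensions}) also differs harmlessly from the paper's subgroup-monotonicity argument.

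Two spots in your verification of \cref{ad1} are asserted rather than proved and should be written out, though both go through. First, in the structural claim, ``contradicting commutativity inside the subgroup'' requires you to pass to the finite-index $\Z^2\le S$ and to know that $S\cap C\neq 1$ (a Hirsch length or rank count, as in the paper's proof of \cref{lemma:case:sol}), so that the element projecting to a translation and a nontrivial vector of $C$ actually commute. Second, in the inversion case, $\langle g,h^2\rangle$ could a priori be infinite cyclic rather than $\Z^2$, so the contradiction is not immediate; the clean fix is to note that an inverting element $h$ has reflection image in $D_\infty$, hence $h^2\in C$, and a vector of $C$ commuting with the hyperbolic element $g$ is fixed by an Anosov power and is therefore trivial, so $h^2=1$ and torsion-freeness gives $h=1$, a contradiction. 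With these routine verifications supplied, your adapted collection satisfies all four conditions of \cref{ad1} and the argument is complete.
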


In order to prove \cref{dg:manifold:prime:sol}, we need the following lemma. Let $\calK$ be the fundamental group of the Klein bottle.

\begin{lemma}\label{lemma:case:sol} Let $K_1$ and $K_2$ be copies of $\calK$, and let $A$ be the index two $\Z^2$-subgroup of $\calK$. Assume $\varphi\colon A \to A$ is a hyperbolic isomorphism. Consider $\Gamma=K_1*_{A}K_2$ the amalgamated product associated to $\varphi$. Then

\begin{enumerate}[a)]
    \item $\calF_2=\calF_1\cup \calF(K_1, K_2)$, where $\calF(K_1, K_2)$ is the smallest family of $\Gamma$ containing $K_1$ and $K_2$.
    \item Let $C$ be an infinite cyclic subgroup of $\Gamma$ then 
     \[
  N_\Gamma C\cong 
  \begin{cases}
   \text{is virtually } \Z^2 & \text{ if } |C\cap A|=\infty \\
   \text{is virtually } \Z & \text{} otherwise.
  \end{cases}
  \]
    
\end{enumerate}
\end{lemma}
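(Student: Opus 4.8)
The plan is to leverage the normal abelian subgroup $A$ and the dihedral quotient. Since $A$ has index two in each $K_i$ it is normal in each $K_i$, hence $A\trianglelefteq\Gamma$, and $\Gamma/A\cong(\dbZ/2)*(\dbZ/2)\cong D_\infty$, the infinite dihedral group. Equivalently, both edge indices of the splitting equal $2$, so every vertex of the Bass--Serre tree $T$ has degree two and $T$ is a line: $A$ is precisely the kernel of $\Gamma\to\isom(T)=D_\infty$, every edge stabiliser equals $A$, and the vertex stabilisers are exactly the conjugates of $K_1$ and $K_2$. Because $A$ is abelian, conjugation descends to a genuine action of $D_\infty=\Gamma/A$ on $A\cong\dbZ^2$. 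The single fact that drives everything, which I will record as $(\star)$, is: any $g\in\Gamma$ projecting to a \emph{nontrivial translation} of $D_\infty$ acts on $A$ by a hyperbolic matrix, and so by the stability of type under powers (recorded before \cref{thm:case:plane}) it has no eigenvalue $\pm1$. This is exactly where the hyperbolicity of $\varphi$, i.e. the Sol geometry, enters: the translation subgroup of $D_\infty$ acts through a power of the hyperbolic monodromy determined by $\varphi$.

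For part (a), the inclusion $\calF_1\cup\calF(K_1,K_2)\subseteq\calF_2$ is immediate since each $K_i\cong\calK$ is virtually $\dbZ^2$. For the converse take $H\in\calF_2$, so $H$ is (finitely generated) virtually $\dbZ^r$ with $r\le 2$, and apply \cref{finitely:generated:group:fix:edge:or:act:path} to its action on the line $T$. If $H$ fixes a vertex it lies in a conjugate of some $K_i$, hence $H\in\calF(K_1,K_2)$. If instead $H$ acts cocompactly on a geodesic line of $T$ (necessarily all of $T$), I examine the normal subgroup $H\cap A\trianglelefteq H$: it cannot have rank $2$, for then $H$ would have Hirsch length $\ge 3$, contradicting virtual $\dbZ^2$; and it cannot have rank $1$, since an element of $H$ projecting to a translation would normalise the rank-one group $H\cap A\le A$ and thus fix or invert it, giving an eigenvalue $\pm1$ against $(\star)$. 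Hence $H\cap A=1$, so $H$ embeds in $D_\infty$ and is virtually cyclic, i.e. $H\in\calF_1$.

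For part (b) I split on the image $\bar C\le D_\infty$ of the infinite cyclic $C$. If $\bar C=1$, i.e. $C<A$, then $A\le C_\Gamma(C)\le N_\Gamma C$, so $N_\Gamma C$ contains $A\cong\dbZ^2$ as a normal subgroup with $N_\Gamma C/A\le D_\infty$; were this quotient infinite it would contain a translation whose lift normalises $C<A$ and hence fixes or inverts it, contradicting $(\star)$. Thus the quotient is finite and $N_\Gamma C$ is virtually $\dbZ^2$. If $\bar C\ne1$, write $M$ for the action of $\bar C$ and use the identity $vcv^{-1}=\bigl((I-M)v\bigr)c$ for $v\in A$ (additive notation in $A$), where $c$ generates $C$. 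When $\bar C$ is infinite cyclic, $C\cap A=1$; since translations fix no nonzero vector by $(\star)$, the displayed identity forces $N_\Gamma C\cap A=1$, so $N_\Gamma C\hookrightarrow D_\infty$ is virtually $\dbZ$. When $\bar C$ has order two, $C\cap A=\langle c^2\rangle\cong\dbZ$ is normalised by $N_\Gamma C$, and the identity gives $N_\Gamma C\cap A=\fix(M)$: the coset solving $vcv^{-1}=c^{-1}$ is empty because $c^2\in\fix(M)$ lies outside $\im(I-M)$, so $N_\Gamma C\cap A$ has rank one; finally $N_\Gamma C/(N_\Gamma C\cap A)\le D_\infty$ is finite by the same translation-versus-$(\star)$ argument, whence $N_\Gamma C$ is virtually $\dbZ$.

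The main obstacle is the reflection case of (b), where one must identify $M=M_{\bar C}$ as a non-central involution. Here $M^2=I$ because $c^2\in A$; $M\ne-I$ because $0\ne c^2$ is $M$-fixed, as $M(c^2)=cc^2c^{-1}=c^2$; and $M\ne I$ because otherwise the product of $\bar C$ with an adjacent reflection, a nontrivial translation, would act as an involution rather than hyperbolically, contradicting $(\star)$. An integral involution other than $\pm I$ has eigenvalues $\{1,-1\}$, so $\fix(M)\cong\dbZ$ and $\im(I-M)$ meets $\fix(M)$ only in $0$; this is exactly what pins $N_\Gamma C$ down to virtual $\dbZ$ rather than $\dbZ^2$, and it runs parallel to the hyperbolic computation in \cref{subgroups:semi:direct}. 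The only other delicate point is the careful justification of $(\star)$ from the hyperbolicity of $\varphi$, which I would make explicit when setting up the $D_\infty$-action on $A$.
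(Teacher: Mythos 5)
Your proof is correct, and it takes a genuinely different route from the paper's. Both arguments rest on the extension $1\to A\to\Gamma\xrightarrow{\psi}D_\infty\to 1$, but the paper immediately passes to the index-two subgroup $\Gamma'=\psi^{-1}(L)\cong A\rtimes_\varphi L$ (where $L\le D_\infty$ is the translation subgroup) and quotes \cref{subgroups:semi:direct}: for a) it intersects $S\in\calF_2$ with $\Gamma'$ to conclude $S\cap\Gamma'\le A$, hence $\psi(S)$ is finite and $S$ is subconjugate to $K_1$ or $K_2$; for b) it reads $N_\Gamma C\cap\Gamma'$ off the same lemma. You instead argue inside $\Gamma$ itself: for a) via the Bass--Serre line, \cref{finitely:generated:group:fix:edge:or:act:path}, a Hirsch-length count, and $(\star)$; for b) via the identity $vcv^{-1}=((I-M)v)c$ and eigenvalue analysis. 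Two points of comparison. First, your treatment of the reflection case in b) is more careful than the paper's: when $\psi(C)$ has order two one has $C\cap\Gamma'=\langle c^2\rangle\le A$, so \cref{subgroups:semi:direct} only places $N_\Gamma C\cap\Gamma'$ inside $N_{\Gamma'}(\langle c^2\rangle)=A$, a copy of $\Z^2$, and cutting this down to rank one requires precisely your computation $N_\Gamma C\cap A=\fix(M)\cong\Z$ for the non-central involution $M$ (using $c^{-2}\notin\im(I-M)$); the paper's one-line citation elides this step. Second, your $(\star)$ is exactly the paper's unproved identification $\Gamma'\cong A\rtimes_\varphi L$: both proofs take it as part of the meaning of ``the amalgamated product associated to a hyperbolic $\varphi$'' that elements projecting to nontrivial translations act on $A$ by powers of a hyperbolic matrix. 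Under the literal reading in which $\varphi$ is merely the identifying isomorphism of the amalgam, a translation acts by a product of the two Klein-bottle involutions with $\varphi$-conjugation interposed, which need not be hyperbolic; so your stated intention to make $(\star)$ explicit is where the hypothesis must genuinely be pinned down, and the same caveat applies equally to the paper's own proof. In short, the paper's route buys brevity by reusing an earlier lemma, while yours buys a self-contained argument that explicitly covers all cases.
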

\begin{proof}
a) We show the inclusion $\calF_2\subseteq \calF_1\cup \calF(K_1,K_2)$. Let $S\in\calF_2 $ then $S$ is virtually cyclic or virtually $\Z^2$. If $S$ is virtually cyclic by definition $S\in \calF_1\subseteq\calF_1\cup \calF(K_1,K_2)$. Now, let $S$ be  virtually $\Z^2$. We have the following short exact sequence \begin{equation*}\label{short:exact:sequence:lemma:1}
1\to  A \to \Gamma \xrightarrow{\psi} D_{\infty} \to 1
\end{equation*}
Let $L$ be the subgroup isomorphic to $\Z$ of $D_{\infty}$ of index 2. Then the subgroup $\Gamma^{\prime}=\psi^{-1}(L)\cong A\rtimes_{\varphi}L\le\Gamma$ is also  of index 2. It follows that $\Gamma^{\prime}\cap S$ is finite index in $S$. Since  $S\le \Gamma$ is virtually $\Z^2$, then  $\Gamma^{\prime}\cap S\le \Gamma^{\prime}$ is also virtually $\Z^2$. By \cref{subgroups:semi:direct} $, \Gamma^{\prime}\cap S\le A$. Therefore in the next short exact sequence  
\begin{equation*}
1\to  A\cap S \to S \to \psi(S) \to 1
\end{equation*}
we have that $\psi(S)$ is finite. If we think of $D_\infty$ as the free product of two cyclic groups of order two, we can easily see that every finite subgroup of $D_\infty$ is either trivial or a conjugate of one of the free factors. It follows that $S$ is subconjugate to $K_1$ or $K_2$. 
The other inclusion $\calF_1\cup \calF(K_1,K_2)\subseteq \calF_2$ is clear.

b) First assume $|C\cap A|=\infty$. Recall that if $C$ and $C'$ are commensurable cyclic subgroups of $\Gamma$, then $N_\Gamma C=N_\Gamma C'$, see \cite[Lemma~5.15]{luck:Weiermann}, thus we can assume that $C\le A$. Then by \cref{subgroups:semi:direct}, $N_{\Gamma}C\cap \Gamma^{\prime}$ is isomorphic to $\Z^2$. Now $\Gamma^{\prime}$ is of index 2 in $\Gamma$, then $[N_{\Gamma}C:N_{\Gamma}C\cap\Gamma^{\prime}]\leq 2$. It follows that $N_{\Gamma}C$ is virtually to $\Z^2$. 
Now suppose that $|C\cap A|<\infty$, thus $C\cap A$ is trivial because $A$ is torsion-free. By \cref{subgroups:semi:direct}, we have $N_{\Gamma}C\cap \Gamma^{\prime}$ is isomorphic to $\Z$. Now $\Gamma^{\prime}$ is of index 2 in $\Gamma$, then $[N_{\Gamma}C:N_{\Gamma}C\cap\Gamma^{\prime}]\leq2$. It follows that $N_{\Gamma}C$ is virtually to $\Z$.

\end{proof}

\begin{proof}[Proof of \cref{dg:manifold:prime:sol}]

In \cite[Theorem 1.8.2, p.17]{matthias:stefan:henry} they show that if $M$ is modelled on $\sol$, then either $M$ is the mapping torus of $(T^2,A)$ with $A$ Anosov or $M$ is a double of  $K\tilde{\times} I$, the twisted $I$-bundle over the Klein bottle. In the first case  we have  that $\Gamma=\Z^2\rtimes_{\psi}\Z$ with $\psi(1)=A$ hyperbolic. By \cref{thm:case:plane} part $c)$  $\gd_{\calF_2}(\Gamma)=2$.

Suppose now that $M$ is a double of $K\tilde{\times} I$. In this case, by Seifert-Van Kampen theorem,   $\Gamma=K_1\ast_{\Z^2}K_2$ where $K_1$ and $K_2$ are copies of the fundamental group of the Klein bottle  $\calK$, and $\dbZ^2$ is embedded in $K_i$ as an index 2 subgroup. Let $H$ be the index two $\dbZ^2$-subgroup of $\calK$. In the proof of \cref{lemma:case:sol} we proved that $\Gamma$ contains an index 2 subgroup isomorphic to $\Z^2\rtimes_\varphi \Z$ with $\varphi\colon H\to H$ a hyperbolic isomorphism. It follows, from our previous case  $\gd_{\calF_2}(\Gamma)\ge \gd_{\calF_2}(\Z^2\rtimes_{\varphi}\Z)=2$. It remains to be proven that $\gd_{\calF_2}(\Gamma)\le 2$, for this  we construct a model for $E_{\calF_2}\Gamma$ of dimension 2. The construction of such a model follows the same strategy as in the proof of \cref{thm:case:plane} c). We include the details for the sake of completeness.

By \cref{lemma:case:sol}, $\calF_2=\calF_1\cup \calF(K_1,K_2)$, where $\calF(K_1,K_2)$ is the smallest family of $\Gamma$ containing $K_1$ and $K_2$. Then, by \cite[Lemma 4.4]{DQR11} the following  homotopy $\Gamma$-push-out give a model for $E_{\calF_2}\Gamma$
 \begin{equation}\label{c350}
 \xymatrix{E_{\calF_1(K_1,K_2)}\Gamma\ar[d]^g \ar[r]^f  & E_{\calF_1}\Gamma\ar[d]\\
E_{\calF(K_1,K_2)}\Gamma \ar[r] & X
}
\end{equation}
where $\calF_1(K_1,K_2)=\calF(K_1,K_2)\cap \calF_1$. We claim that, with suitable choices, $X$ is 2-dimensional. Let us explain first the idea of the construction. First note that $H$ is a normal subgroup of $\Gamma$, and $\Gamma/H\cong \Z_2\ast \Z_2$. Let $\calF(\dbZ_2, \dbZ_2)$ be the smallest family of $\Z_2\ast \Z_2$ containing both $\Z_2$ factors, and notice that $\calF(K_1,K_2)$ coincides with the smallest family of $\Gamma$ containing all the preimages of elements in $\calF(\Z_2,\Z_2)$ under the projection $\Gamma\to \Z_2\ast \Z_2$. Recall that $\Z_2\ast \Z_2$ acts on $\R$ by simplicial isometries, in fact, the first factor acts as a reflection through 0, and the second factor as a reflection through 1/2, and with this action $E_{\calF(\dbZ_2, \dbZ_2)}(\dbZ_2*\dbZ_2)=\R$.  Now it is easy to verify that $E_{\calF(\dbZ_2, \dbZ_2)}(\Gamma/H)=E_{\calF(\dbZ_2, \dbZ_2)}(\dbZ_2*\dbZ_2)=\R$ is a model for $E_{\calF(K_1,K_2)}\Gamma$.

 Next, we will show that there is a 3-dimensional model $Y$ for $E_{\calF_1}\Gamma$ such that 
\begin{enumerate}
    \item $Y$ is the union of two $\Gamma$-subcomplexes $Y_1$ and $Y_2$.
    
    \item every 3-cell in $Y$ belongs to $Y_2$.
    
    \item $Y_2$ is model for $E_{\calF_1(K_1,K_2)}\Gamma$.
    \item The map $f$ is the inclusion $Y_2\to Y$.
\end{enumerate}
Since $f$ is an inclusion, then the homotopy $\Gamma$-push-out \eqref{c350} can be replaced by an honest $\Gamma$-pushout, see for example \cite[Theorem~1.1]{Wa80}. That is, we take $X=Y\cup_g \mathbb R=Y\sqcup \mathbb R/\sim$, where we identify $x\sim g(x)$ for all $x\in Y_2$. By (1)-(4), we can see that every 3-cell of $Y$ is being collapsed to a 1-dimensional space via $g$, hence we get that $X$ is a complex of dimension less than or equal to 2. From the explicit construction we actually conclude $X$ is of dimension 2.
Let us construct $Y$, $Y_1$ and $Y_2$.  Since $\Gamma$ is a torsion-free and virtually poly-$\Z$ group (it contains an index two subgroup isomorphic to $\Z^2\rtimes \Z)$, by  \cite[Lemma 5.15 and Theorem 2.3]{luck:Weiermann} we get a model $Y$ for $E_{\calF_1} \Gamma$ via the following $\Gamma$-push-out 
\begin{equation}\label{cdiagram}
 \xymatrix{\displaystyle\bigsqcup_{C\in \I}\Gamma\times_{N_\Gamma C} EN_{\Gamma}C\ar[d] \ar[r]  & E\Gamma\ar[d]\\
\displaystyle\bigsqcup_{C\in \I}\Gamma\times_{N_\Gamma C} E W_{\Gamma}C \ar[r] & Y
}
\end{equation}
where  $\I$ is a set of   representatives of commensuration classes in $\calF_1-\calF_0$. To construct the aforementioned $\Gamma$-pushout we use a construction analogue to the one described in \cref{conditions:exist:pushout}, that is we replace $E\Gamma$ with the mapping cylinder of the top horizontal arrow.

Since $I$ is a set of representatives of commensuration classes, we can assume that every $C\in I$ is a subgroup of the index two subgroup $\Z^2\rtimes \Z$ of $\Gamma$. Now by \cref{lemma:case:sol} b) we have that $I$ is disjoint union of $\I_1=\{C\in \I | N_{\Gamma}C\text{ is virtually }\Z \}=\{C\in \I | C\not\le H \}$ and $\I_2=\{C\in \I | N_{\Gamma}C \text{ is virtually }\Z^2\}=\{C\in \I | C\le H \}$. Then the $\Gamma$-push-out \eqref{cdiagram} can written as  

\begin{equation}\label{cpushout:space:classifying:F1}
 \xymatrix{\left(\displaystyle\bigsqcup_{C\in \I_1}\Gamma\times_{N_\Gamma C} EN_{\Gamma}C\right)\displaystyle\bigsqcup\left(\displaystyle\bigsqcup_{C\in \I_2}\Gamma\times_{N_\Gamma C} EN_{\Gamma}C\right)\ar[d] \ar[r]  & E\Gamma\ar[d]\\
\left(\displaystyle\bigsqcup_{C\in \I_1}\Gamma\times_{N_\Gamma C} EW_{\Gamma}C\right) \displaystyle\bigsqcup\left(\displaystyle\bigsqcup_{C\in \I_2}\Gamma\times_{N_\Gamma C} EW_{\Gamma}C\right)  \ar[r] & Y
}
\end{equation}

Define $Y_1$ and $Y_2$ via the following $\Gamma$-push-outs

\begin{equation*}
 \xymatrix{\displaystyle\bigsqcup_{C\in I_1}\Gamma\times_{N_\Gamma C} EN_{\Gamma}C\ar[d] \ar[r]  & E\Gamma\ar[d]\\
\displaystyle\bigsqcup_{C\in I_1}\Gamma\times_{N_\Gamma C} EW_{\Gamma}C  \ar[r] & Y_1 
}\quad\quad\quad\quad\quad
\xymatrix{\displaystyle\bigsqcup_{C\in I_2}\Gamma\times_{N_\Gamma C} EN_{\Gamma}C\ar[d] \ar[r]  & E\Gamma\ar[d]\\
\displaystyle\bigsqcup_{C\in I_2}\Gamma\times_{N_\Gamma C} EW_{\Gamma}C  \ar[r] & Y_2 
}
\end{equation*}
We used the cylinder construction described in  \cref{conditions:exist:pushout} to construct all the pushouts above.
Clearly $Y=Y_1\cup Y_2$, and $Y_1\cap Y_2=E\Gamma$. Since $\calF_1(K_1,K_2)$ is the smallest family of $\Gamma$ containing the  virtually cyclic subgroups of $K_1$ and $K_2$, a direct application of  \cite[Lemma 5.15 and Theorem 2.3]{luck:Weiermann} leads to the fact that $Y_2$ is a model for $E_{\calF_1(K_1,K_2)}\Gamma$. 

 We note the following:
\begin{enumerate}[i)]
    \item If $C\in \I_1$, then $N_\Gamma C$ is virtually $\Z$, and by thus by \cite[Theorem~5.13]{luck:Weiermann}   $EN_{\Gamma}C$ has a 1-dimensional model. Thus the cylinder associated to $EN_{\Gamma}C$  contributes to $E_{\calF_1}\Gamma$ with a subspace of dimension 2. 
    \item If $C\in \I_2$, then $N_\Gamma C$ is a torsion-free virtually $\Z^2$ group, thus by \cite[Theorem~5.13]{luck:Weiermann} $EN_{\Gamma}C$ has a 2-dimensional model. Thus the cylinder associated to $EN_{\Gamma}C$ contributes to $E_{\calF_1}\Gamma$ with a subspace of dimension 3.
    \item By \cite[Theorem 5.13]{luck:Weiermann} $E\Gamma$ has a model of dimension 3.
\end{enumerate}  

As a consequence of the above observations, we conclude that every 3-cell of $Y$ belongs to $Y_2$. This concludes the construction of a 2-dimensional model for $E_{\calF_2}\Gamma$.
\end{proof}

\section{A small detour on acylindricity of groups acting on trees}\label{section:acylindricity}

In this section we prove \cref{acylindrical:stabilizer:geodesic} and \cref{building:model:apply:haefliger}. These results will be relevant in the next section to compute the $\calF_2$-dimension of fundamental group of prime manifolds from the $\calF_2$-dimension of the JSJ-pieces, and to compute the $\calF_2$-dimension of a 3-manifold group from he $\calF_2$-dimensions of the prime pieces.

In the following definition we state the notion of acylindricity, which will be key to understand the abelian subgroups of a 3-manifold group.

\begin{definition}
Let $Y$ be a graph of groups with fundamental group $G$. The splitting  $G=\pi_1(Y)$ is \emph{acylindrical} if there is a integer  $k$ such that, for every path  $\gamma$ of length  $k$ in the Bass-Serre tree $T$ of  $Y$,  the stabilizer of $\gamma$ is finite. 
\end{definition}

Recall a \emph{geodesic line} of a simplicial tree $T$, is a simplicial embedding of $\mathbb{R}$ in $T$, where $\mathbb{R}$ has as vertex set $\dbZ$ and an edge joining any two consecutive integers.

\begin{lemma}\label{acylindrical:stabilizer:geodesic}
 Let  $Y$ be a graph of groups with fundamental group   $\Gamma$, and Bass-Serre tree  $T$. Suppose that the splitting of  $\Gamma$ is acylindrical. Then
 \begin{enumerate}[a)]
     \item the setwise stabilizer of every geodesic line  in  $T$ is virtually cyclic.
     
     \item every  virtually $\Z^n$ subgroup of $\Gamma$ with $n\ge 2$  fixes a vertex of  $T$.
 \end{enumerate} 
\end{lemma}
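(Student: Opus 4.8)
The plan is to use the acylindricity hypothesis together with \cref{finitely:generated:group:fix:edge:or:act:path} to control how virtually abelian subgroups act on the Bass-Serre tree $T$. The two parts are closely linked: part a) says a geodesic line cannot have a large stabilizer, and part b) says a higher-rank abelian group is forced to fix a vertex precisely because it cannot be carried on a line. I would prove a) first and deduce b) from it.

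\textbf{Part a).} Let $\gamma$ be a geodesic line in $T$ and let $S = \stab_\Gamma(\gamma)$ be its setwise stabilizer. The key point is that acylindricity bounds the pointwise stabilizer of $\gamma$. Indeed, if $k$ is the acylindricity constant, then any element fixing $\gamma$ pointwise fixes a path of length $k$, so the pointwise stabilizer $S_0$ of $\gamma$ is finite (it is contained in the stabilizer of some length-$k$ subpath). Now $S$ acts on the line $\gamma \cong \dbR$ (combinatorially, on its set of vertices $\cong \dbZ$), and this action gives a homomorphism $S \to \isom(\dbZ) = D_\infty$ whose kernel is exactly $S_0$. Since $S_0$ is finite and $D_\infty$ is virtually cyclic, $S$ is an extension of a finite group by a virtually cyclic group, hence virtually cyclic. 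This establishes a).

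\textbf{Part b).} Let $H \leq \Gamma$ be virtually $\dbZ^n$ with $n \geq 2$. By \cref{finitely:generated:group:fix:edge:or:act:path}, exactly one of two things happens: either $H$ fixes a vertex, or $H$ acts cocompactly on a unique geodesic line $\gamma$. In the second case, $H \leq \stab_\Gamma(\gamma)$, which is virtually cyclic by part a). But a virtually $\dbZ^n$ group with $n \geq 2$ contains $\dbZ^2$ and so cannot be virtually cyclic, a contradiction. Hence the first alternative must hold and $H$ fixes a vertex of $T$.

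\textbf{The main obstacle} I anticipate is making the extraction of finiteness of the pointwise stabilizer from the acylindricity constant fully rigorous: acylindricity bounds stabilizers of paths of a \emph{fixed} length $k$, so I must argue that an element fixing the whole bi-infinite line in particular fixes some length-$k$ subpath, and that the pointwise stabilizer $S_0$ therefore injects into the stabilizer of such a subpath. One subtlety is whether ``fixes $\gamma$ pointwise'' should mean fixing vertices or also edges; since $\Gamma$ acts without inversions this causes no trouble, but I would state it carefully. The passage from $S$ acting on $\gamma$ to the homomorphism into $D_\infty$ is routine, as is the three-step extension argument showing $S$ is virtually cyclic, so the genuine content lies in wiring the acylindricity constant into the finiteness of $S_0$.
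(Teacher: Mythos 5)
Your proof is correct and follows essentially the same route as the paper: acylindricity makes the pointwise stabilizer of a line finite (since the line contains length-$k$ subpaths), the setwise stabilizer then maps to a subgroup of $D_\infty$ with finite kernel, and part b) combines the fixed-vertex/cocompact-line dichotomy of \cref{finitely:generated:group:fix:edge:or:act:path} with the impossibility of a virtually $\dbZ^n$ group ($n\ge 2$) living on a line. The only difference is organizational and slightly in your favor: in part b) you cite part a) directly ($H\le\stab_\Gamma(\gamma)$ is virtually cyclic, contradiction), whereas the paper re-runs the kernel-and-image argument for the finite-index $\dbZ^n$ subgroup instead of invoking a).
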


\begin{proof}
a) Let  $L$ be a geodesic line of $T$. 
Denote by  $\fix_{\Gamma}(L)$ (resp. $\stab_{\Gamma}(L)$) the subgroup of all elements of  $\Gamma$ that fix $L$ pointwise (resp. setwise). Consider $L$ with simplicial structure induced by $T$. Note that the group of simplicial automorphisms of $L$, denoted $\aut(L)$, is isomorphic to the infinite dihedral group $D_{\infty}$. Since $\stab_{\Gamma}(L)$ acts by simplicial automorphisms on $L$, we have a homomophism $\stab_{\Gamma}(L)\to \aut(L)=D_\infty$ with kernel $\fix_{\Gamma}(L)$ and image a certain subgroup $D$ of $D_\infty$. In other words we have a short exact sequence  
$$1\to \fix_{\Gamma}(L) \to \stab_{\Gamma}(L)\to D \to 1.$$
Since $L$ contains arbitrarily long paths of $T$, the acilyndricity hypothesis implies that $\fix_{\Gamma}(L)$ is finite. On the other hand, since $D_\infty$ if a virtually cyclic group, then $D$ also is virtually cyclic. Therefore  $\stab_{\Gamma}(L)$ is virtually cyclic. 
 
b) Let  $H$ be a virtually $\Z^n$ subgroup of  $\Gamma$ with  $n\ge 2$, then by  \cref{finitely:generated:group:fix:edge:or:act:path} it happens exactly one of the following: either $H$ fixes a vertex of $T$ or $H$ acts co-compactly in a unique geodesic line  $L$ of  $T$. It remains to rule out the second possibility. Suppose that  $H$ acts co-compactly in a unique geodesic line $L$ of $T$.
Let $S$ be a finite index subgroup of  $H$ isomorphic to $\Z^n$. Note that  $S$ acts by restriction on   $L$, i.e. we have a homomorphism  $S\xrightarrow{\varphi} \Aut(L)$ whose image contains an infinite subgroup of translations. Since $\Aut(L)\cong D_{\infty}$, we have $\varphi(S)$ either is isomorphic to $\Z$ or to   $D_{\infty}$. But  $\varphi(S)$ is not isomorphic to $D_{\infty}$, otherwise the abelian group $S/\ker(\varphi)$ would be isomorphic to the non-abelian group $D_{\infty}$. The homomorphism   $S\xrightarrow{\varphi} \Aut(L)$ induces the following short exact sequence  
 $$1\to \ker(\varphi) \to S\to \varphi(S)\cong\dbZ\to 1.$$
Note that every element of $\ker(\varphi)$, by definition, acts trivially on $L$, thus $\ker(\varphi)\subseteq\fix_\Gamma(L)$. On the other hand, since $L$ is an (infinite) geodesic of $T$, it contains paths of arbitrary large length, therefore $\fix_\Gamma(L)$ fixes  arbitrarily long paths. The acilydricity hypothesis implies that $\fix_\Gamma(L)$ is finite. Hence $\ker(\varphi)$ is a finite subgroup of the torsion-free group $S$, thus $\ker(\varphi)$ is trivial. We conclude that $S\cong \dbZ^n$ embeds in $\dbZ$ which is a contradiction.
\end{proof}

In the following theorem we describe a 2-dimensional model $\widetilde T$ for a classifying space with respect to a family that contains the family $\calF_n$. This $\widetilde T$ will be used in the next section to compute the $\calF_2$ dimension of a 3-manifold group using the prime splitting and the JSJ decomposition.

\begin{theorem}\label{building:model:apply:haefliger}
Let $Y$ be a graph of groups with fundamental group  $\Gamma$ finitely generated  and Bass-Serre tree  $T$. Consider the collection  $\mathcal{A}$ of all the geodesics of   $T$  that admit a  co-compact action of an infinite  virtually  cyclic subgroup of $\Gamma$. Then the space $\widetilde{T}$ given by the following  homotopy $\Gamma$-push-out

\begin{equation}
 \xymatrix{\displaystyle\bigsqcup_{\gamma\in \mathcal{A}}\gamma\ar[d] \ar[r]  & T\ar[d]\\
\displaystyle\bigsqcup_{\gamma\in \mathcal{A}} \{*_{\gamma}\}  \ar[r] & \widetilde{T} 
}
\end{equation}
is a model $\widetilde{T}$ for $E_{\iso_{\Gamma}(\widetilde{T})}\Gamma$ where  $\iso_{\Gamma}(\widetilde{T})$ is the family generated by the isotropy groups of $\widetilde{T}$, i.e. by coning-off on $T$ the geodesics in $\mathcal{A}$  we obtain a model for  $E_{\iso_{\Gamma}(\widetilde{T})}\Gamma$. Moreover,  if  the splitting  $\Gamma=\pi_1(Y)$ is acylindrical, then the family  $\iso_{\Gamma}(\widetilde{T})$ contains the family  $\calF_n$ of $\Gamma$ for all $n\ge 0$.  
\end{theorem}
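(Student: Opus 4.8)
The plan is to build the space $\widetilde{T}$ as a concrete cellular $\Gamma$-push-out and verify the two separate claims: first that $\widetilde{T}$ is a model for $E_{\iso_\Gamma(\widetilde{T})}\Gamma$, and second that under acylindricity the isotropy family swallows every $\calF_n$. I would begin by recording the equivariance of the construction. The collection $\mathcal{A}$ of geodesic lines admitting a cocompact action of an infinite virtually cyclic subgroup is manifestly $\Gamma$-invariant (conjugating the acting subgroup carries a geodesic in $\mathcal{A}$ to another), so the left vertical map $\bigsqcup_{\gamma\in\mathcal{A}}\gamma \to \bigsqcup_{\gamma\in\mathcal{A}}\{*_\gamma\}$ and the top inclusion into $T$ are both $\Gamma$-maps, and the push-out carries a genuine $\Gamma$-action. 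To make this a cellular $\Gamma$-push-out in the sense required for the earlier push-out machinery, I would reindex $\mathcal{A}$ by $\Gamma$-orbit representatives, write each orbit as $\Gamma\times_{\stab_\Gamma(\gamma)}\gamma$, and use that the top map is an inclusion of $\Gamma$-CW-complexes while the left map collapses each line to a point.

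**Identifying $\widetilde T$ as a classifying space.**

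For the first assertion the key point is to compute fixed-point sets $\widetilde{T}^H$ for subgroups $H$ and show they are contractible exactly when $H\in\iso_\Gamma(\widetilde{T})$; this is the standard criterion for being a model for $E_{\calF}\Gamma$ with $\calF=\iso_\Gamma(\widetilde{T})$. The cleanest route, however, is to invoke \cref{construction:pusout} directly: coning off the geodesics in $\mathcal{A}$ is precisely an instance of that push-out, where the disjoint union of lines plays the role of the $\bigsqcup \Gamma\times_H E_{\calF}H$ term and the cone points play the role of the $E_{\calF'}H$ term. Concretely, for each orbit representative $\gamma$ with $V:=\stab_\Gamma(\gamma)$, the line $\gamma=\mathbb{R}$ is a model for $E_{\calF_0\cap V}V=EV$ (as $V$ is virtually cyclic acting cocompactly), and the cone point $\{*_\gamma\}$ is a model for $E_{\mathrm{SUB}(V)}V$; the push-out then produces a model for $E_{\calF}\Gamma$ where $\calF$ is generated by these stabilizers $V$ together with the original vertex stabilizers of $T$. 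Since the vertex stabilizers of $T$ are exactly the isotropy groups contributed by the $E\Gamma$-style corner and the $V$ are the isotropy contributed by the cone points, this generated family is by definition $\iso_\Gamma(\widetilde{T})$, giving the first claim. I would need to check that the collection of geodesic stabilizers is adapted to the relevant pair of families, but that is bookkeeping: distinct lines meet in a bounded (hence finite-stabilizer) set, conjugation-closure is built in, and self-normalization follows since a geodesic is the unique one on which its stabilizer acts cocompactly (\cref{finitely:generated:group:fix:edge:or:act:path}).

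**The acylindricity clause — the main obstacle.**

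The substantive step is the final sentence: under an acylindrical splitting, $\iso_\Gamma(\widetilde{T})\supseteq\calF_n$ for all $n\ge 0$. I would argue this by taking an arbitrary $H\in\calF_n$, i.e.\ $H$ virtually $\mathbb{Z}^r$ for some $0\le r\le n$, and showing $H$ fixes a point of $\widetilde{T}$. The case $r\ge 2$ is handled immediately by \cref{acylindrical:stabilizer:geodesic}(b): such $H$ fixes a vertex of $T$, and vertices survive into $\widetilde{T}$, so $H$ lies in a vertex stabilizer, which is in $\iso_\Gamma(\widetilde{T})$. The cases $r=0,1$ (finite or virtually cyclic $H$) are the delicate ones: by \cref{finitely:generated:group:fix:edge:or:act:path} applied to $H$, either $H$ fixes a vertex of $T$ — and we are done as before — or $H$ acts cocompactly on a unique geodesic line $\gamma$. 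But if $H$ is infinite virtually cyclic acting cocompactly on $\gamma$, then by definition $\gamma\in\mathcal{A}$, so $\gamma$ has been coned off and $H$ fixes the cone point $*_\gamma$; hence $H$ lies in the stabilizer $\stab_\Gamma(\gamma)$, which is one of the generating isotropy groups of $\widetilde{T}$ and is itself virtually cyclic by \cref{acylindrical:stabilizer:geodesic}(a). Thus in every case $H$ fixes a point of $\widetilde{T}$ and therefore lies in $\iso_\Gamma(\widetilde{T})$. The main obstacle I anticipate is the finite-but-non-vertex-fixing subgroup: a finite $H$ could in principle translate along a line without fixing a vertex, but acylindricity is exactly what forbids this at the level of making $\stab_\Gamma(\gamma)$ virtually cyclic and guarantees every such $\gamma$ is already in $\mathcal{A}$, so the coning-off absorbs these subgroups as well. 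The only care needed is to confirm that finite subgroups not covered by part (b) are nonetheless captured, which reduces to the dichotomy of \cref{finitely:generated:group:fix:edge:or:act:path} together with the membership of their invariant lines in $\mathcal{A}$.
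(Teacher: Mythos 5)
Your overall architecture (equivariance, identification of $\widetilde T$ as a classifying space, then the acylindricity clause) parallels the paper, and your handling of the ``moreover'' clause is essentially the paper's own argument: $r\geq 2$ via \cref{acylindrical:stabilizer:geodesic}, $r=0,1$ via \cref{finitely:generated:group:fix:edge:or:act:path} and the definition of $\mathcal{A}$. But the middle step, where you route the classifying-space identification through \cref{construction:pusout}, has genuine gaps. First, the map $\bigsqcup_{\gamma\in\mathcal{A}}\gamma\to T$ is \emph{not} an inclusion of $\Gamma$-CW-complexes: two distinct geodesics in $\mathcal{A}$ can intersect, and can even share a ray, so the disjoint union of the lines does not embed in $T$. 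Since the other relevant map (collapsing each line to its cone point) is also not an inclusion, neither hypothesis (1) nor (2) of \cref{construction:pusout} holds for the strict cellular push-out you propose; one is forced to work with the homotopy push-out (cones attached along the lines). This is precisely why the paper does not use the adapted-collection machinery here: it takes the homotopy push-out and verifies directly that $\widetilde{T}^K$ is contractible for every $K\in\iso_\Gamma(\widetilde{T})$, splitting into the case $T^K\neq\emptyset$ (a subtree with cones over the contractible sets $\gamma^K$ attached) and the case $T^K=\emptyset$ (a single cone point, singled out because a hyperbolic element of $K$ has a unique axis).

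Second, and more seriously, your verification of adaptedness smuggles acylindricity into the part of the theorem where it is not assumed. You take the line to be a model for $E_{\calF_0\cap V}V=EV$ ``as $V=\stab_\Gamma(\gamma)$ is virtually cyclic'', and you justify condition (a) of \cref{ad1} by ``distinct lines meet in a bounded set, hence have finite stabilizer''. Without acylindricity the setwise stabilizer of a line in $\mathcal{A}$ need not be virtually cyclic (that is exactly the content of \cref{acylindrical:stabilizer:geodesic}, which requires acylindricity), stabilizers of long segments need not be finite, and, as noted, two distinct lines can share an unbounded ray. Moreover, for \cref{construction:pusout} the same family $\calF$ must appear in both top corners, so the lower family of the pair cannot be $\calF_0$: $T$ is a model for $E_{\calF}\Gamma$ only for $\calF$ the family generated by the point stabilizers of $T$, which are typically infinite. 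What your route actually requires is that $\stab_\Gamma(\gamma_1)\cap\stab_\Gamma(\gamma_2)$ fix a point of $T$ whenever $\gamma_1\neq\gamma_2$; this is true, but by a different argument: every element of the intersection is elliptic (a hyperbolic element translating along an invariant line forces that line to be its axis, and axes are unique), and a group of elliptic isometries preserving a line maps onto a subgroup of $\isom(\mathbb{R})$ of order at most $2$, hence fixes a point of that line. Finally, your closing worry about finite subgroups is a non-issue in the opposite sense: a finite group acting on a tree always fixes a point, with no acylindricity needed, so that case is immediate and acylindricity plays no role there.
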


\begin{proof} If $H\leq \Gamma$ acts co-compactly on the geodesic line $\gamma$ of $T$ and $g\in \Gamma$, then $gHg^{-1}$ acts co-compactly on $g\gamma$. It follows that both $\bigsqcup_{\gamma\in \mathcal{A}}\gamma$ and $\bigsqcup_{\gamma\in \mathcal{A}}\{*_\gamma\}$ are $\Gamma$-CW-complexes, and therefore the space  $\widetilde{T}$ is a   $\Gamma$-CW-complex. 

Clearly $\widetilde{T}^{K}$ is nonempty if an only if  $K\in \iso_{\Gamma}(\widetilde{T})$. We will see that for  $K\in \iso_{\Gamma}(\widetilde{T})$ the fixed point set $\widetilde{T}^K$ is contractible.  We have two cases: $T^{K}\neq\emptyset$ or $T^{K}=\emptyset$. In the first case we have that $T^K$ is a sub-tree of $T$. Thus $\widetilde{T}^K$ is obtained from $T^K$ by coning-off some geodesic segments, then the space  $\widetilde{T}^K$ which is contractible, it follows that $\widetilde{T}^K$ is contractible. In the second case we have that  $\widetilde{T}^K$ consists of a union of some cone points. Note that $*_{\gamma}\in \widetilde{T}^K$ if and only if  $K\le \stab_{\Gamma}(\gamma)$. By hypothesis $T^K=\emptyset$, then from   \cite[Corollary~3]{Serre:trees} we have there is a hyperbolic element $h\in K$ that acts co-compactly on $\gamma$. Since $h$ acts co-compactly on a unique geodesic of $T$, we conclude $\widetilde{T}^K=*_{\gamma}$, therefore it is contractible.

We show that the family $\calF_n$ of $\Gamma$ is contained in  $\iso_{\Gamma}(\widetilde{T})$. Let $K\in \calF_n$ then we have three cases: $K$ is finite, $K$ is virtually  $\Z$ or $K$ is virtually $\Z^k$ with $k\ge 2$. If $K$ is finite, then it is well-known that $K$ has a fixed  point  in $ T$, thus  $K\in \iso_{\Gamma}(\widetilde{T})$. If $T$ is virtually  $\Z$ then, by  \cref{finitely:generated:group:fix:edge:or:act:path}, $K$ fixes a vertex in $T$ or it acts co-compactly on a unique geodesic  $\gamma_H$. In the first case $K$ it is clear that  $K\in \iso_{\Gamma}(\widetilde{T})$, while for the second case $K$ fixes $*_\gamma$, and therefore  $K\in \iso_{\Gamma}(\widetilde{T})$. Finally, if  $K$ is virtually $\Z^k$ with  $k\ge 2$, we have by  \cref{acylindrical:stabilizer:geodesic}, that   $K$ fixes a point in $ T$, and therefore $K\in \iso_{\Gamma}(\widetilde{T})$.

\end{proof}

\section{Proofs of \cref{thm:principal:1} and \cref{geometric:dimension:prime}}\label{section:final}

In this section we prove the main theorems of the present paper, but before we need some preliminary results.

\begin{proposition}\label{prop:F3:equal:Fk}
Let $\Gamma$ be fundamental group of a 3-manifold $M$ that it is either hyperbolic, Seifert fiber possibly with non-empty boundary, or modelled on $\sol$. The following statements hold 
\begin{enumerate}
    \item If $M$ is not modelled on $\mathbb E^3$, then $\calF_2=\calF_k$ for all $k\geq 3$. In particular $\gd_{\calF_2}(\Gamma)=\gd_{\calF_k}(\Gamma)$ for all $k\geq 3$.
    
    \item If $M$ is modelled on $\mathbb E^3$, then $\calF_3=\calF_k$ for all $k\geq 4$. Moreover, $\gd_{\calF_k}(\Gamma)=0$ for all $k\geq 3$.
\end{enumerate}
\end{proposition}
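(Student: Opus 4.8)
The plan is to deduce both statements from a single structural dichotomy: for every geometry in the hypothesis \emph{except} $\mathbb{E}^3$ the group $\Gamma$ contains no subgroup isomorphic to $\mathbb{Z}^3$, while if $M$ is modeled on $\mathbb{E}^3$ then $\Gamma$ is virtually $\mathbb{Z}^3$. Granting the first half, part (1) is purely formal. Since $\calF_2\subseteq\calF_3\subseteq\cdots$ always holds, it suffices to check $\calF_k\subseteq\calF_2$ for $k\geq 3$: any $H\in\calF_k\setminus\calF_2$ is virtually $\mathbb{Z}^r$ for some $r\geq 3$ and hence contains a copy of $\mathbb{Z}^3$, contradicting the absence of $\mathbb{Z}^3$ subgroups. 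Thus $\calF_2=\calF_k$ for all $k\geq 2$, and because equal families have $\Gamma$-homotopy equivalent classifying spaces we get $\gd_{\calF_2}(\Gamma)=\gd_{\calF_k}(\Gamma)$, which is the ``in particular'' clause.

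The substance of part (1) is therefore the case-by-case verification that $\Gamma$ has no $\mathbb{Z}^3$ subgroup, each case leaning on a result proved earlier. When $M$ is \emph{hyperbolic}, the proof of \cref{thm:hyperbolic:case} already records that the maximal virtually abelian subgroups of $\Gamma$ have rank at most $2$ (no $\mathbb{Z}^2$ at all in the closed case, and only rank-two peripheral subgroups in the cusped case). When $M$ is \emph{Seifert fibered with bad or spherical base}, $\Gamma$ is virtually cyclic by \cref{thm:seifer:base:spherical}. When the base is \emph{hyperbolic}, I would use the fiber sequence $1\to K\to\Gamma\to\pi_1^{orb}(B)\to 1$ of \cref{sequence:exact:groups:fundamental:seifert} with $K$ cyclic: a subgroup $S\cong\mathbb{Z}^3$ would meet $K$ in a subgroup of rank $\leq 1$, so its image would be a rank $\geq 2$ abelian subgroup of the Fuchsian group $\pi_1^{orb}(B)$, which is impossible. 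When the base is \emph{flat and $M$ has boundary}, $\Gamma$ is virtually $\mathbb{Z}^2$ by \cref{thm:seifer:base:flat:bondary}; when the base is \emph{flat, $M$ is closed and not modeled on $\mathbb{E}^3$}, then $M$ is modeled on $\nil$ by \cref{thm:seifer:base:flat}, and such a $\Gamma$ cannot contain $\mathbb{Z}^3$ (a $\mathbb{Z}^3$ subgroup would have finite index by a Hirsch-length count, forcing $\Gamma$ to be virtually $\mathbb{Z}^3$ and $M$ to be flat). Finally, when $M$ is modeled on \emph{$\sol$}, \cref{dg:manifold:prime:sol} splits $\Gamma$ as either $\mathbb{Z}^2\rtimes_A\mathbb{Z}$ with $A$ hyperbolic or a double of the twisted $I$-bundle over the Klein bottle; the descriptions $\calF_2=\calF_1\cup\SUB(H)$ from \cref{subgroups:semi:direct} and $\calF_2=\calF_1\cup\calF(K_1,K_2)$ from \cref{lemma:case:sol} show that every virtually abelian subgroup is virtually cyclic or virtually $\mathbb{Z}^2$, again precluding $\mathbb{Z}^3$.

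For part (2), $M$ modeled on $\mathbb{E}^3$ means $\Gamma$ is $3$-crystallographic, i.e. virtually $\mathbb{Z}^3$, so $\Gamma\in\calF_3$. As any subgroup of a virtually $\mathbb{Z}^3$ group is virtually $\mathbb{Z}^r$ for some $r\leq 3$, the family $\calF_3$ consists of \emph{all} subgroups of $\Gamma$, and the same holds for every $\calF_k$ with $k\geq 3$; in particular $\calF_3=\calF_k$ for all $k\geq 4$. Since $\Gamma\in\calF_k$ for $k\geq 3$, the one-point $\Gamma$-space is a model for $E_{\calF_k}\Gamma$, so $\gd_{\calF_k}(\Gamma)=0$.

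The step I expect to be most delicate is not the formal family bookkeeping but keeping the case analysis of part (1) uniformly rigorous across geometries, in particular correctly passing from ``virtually $\mathbb{Z}^3$'' to an honest $\mathbb{Z}^3$ subgroup and handling the $\nil$ and Seifert-hyperbolic cases, where the argument must go through a quotient or a finite-index subgroup rather than reading the answer off an already-computed family.
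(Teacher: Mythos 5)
Your proof is correct in substance, and it establishes exactly the same structural dichotomy that drives the paper's proof (no $\Z^3$ subgroup unless $M$ is modeled on $\dbE^3$; virtually $\Z^3$ in that case), together with the same formal bookkeeping for parts (1) and (2). Where you differ is in how the dichotomy is proved. The paper's argument is indirect and only a few lines long: by \cite[Proposition~A]{onorio} one has $\gd_{\calF_2}(\Z^3)=5$, and since $\calF_2$-geometric dimension can only decrease when passing to subgroups, a $\Z^3$ subgroup would force $\gd_{\calF_2}(\Gamma)\geq 5$; comparing with the values already computed in \cref{summary:dimention:pieces:jsj:Fk} and \cref{dg:manifold:prime:sol}, which are at most $3$ in every case except the $\dbE^3$ row, rules this out. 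In other words, the paper lets its own dimension computations certify the absence of $\Z^3$ subgroups. You instead verify the absence geometry by geometry (elementary subgroups in the hyperbolic case, the fiber sequence plus the fact that Fuchsian groups contain no $\Z^2$ for hyperbolic base, Hirsch length for $\nil$, the structure of $\Z^2$ subgroups for $\sol$). Your route is more self-contained --- it needs neither the computation $\gd_{\calF_2}(\Z^3)=5$ nor subgroup monotonicity of $\gd$ --- but it costs a case analysis; the paper's route is essentially free given the work already done in its earlier sections.

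Two spots in your case analysis should be tightened. In the $\sol$ case, the equalities $\calF_2=\calF_1\cup\SUB(H)$ and $\calF_2=\calF_1\cup\calF(K_1,K_2)$ cannot by themselves preclude a $\Z^3$ subgroup: a $\Z^3$ subgroup does not belong to $\calF_2$ by definition, so these descriptions of $\calF_2$ say nothing about it. What does the work is the underlying statement, \cref{subgroups:semi:direct}~a): every $\Z^2$ subgroup of $\Z^2\rtimes_A\Z$ ($A$ hyperbolic) lies in $H=\Z^2\rtimes\{0\}$. From this, if $S\cong\Z^3$ then $S\cap H$ has corank at most $1$ in $S$, hence $S\cap H\cong\Z^2$ and $S/(S\cap H)\cong\Z$ acts on $S\cap H$ by a nonzero power of $A$; abelianness of $S$ forces that power to be trivial, contradicting hyperbolicity (the Klein-bottle double then follows by passing to its index-two subgroup of this form). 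Alternatively, your Hirsch-length argument for $\nil$ applies verbatim to $\sol$. The same caveat applies to the cusped hyperbolic case: the adapted collection $\mathcal{B}$ in \cref{thm:hyperbolic:case} only constrains members of $\calF_2$, so you should instead invoke the standard fact (e.g.\ \cite[Proposition 2.2]{matsuzaki:taniguchi}) that discrete elementary subgroups of $\mathrm{Isom}(\mathbb{H}^3)$ have rank at most $2$. These are local repairs; the overall strategy is sound.
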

\begin{proof}
By \cite[Proposition~A]{onorio}, $\gd_{\calF_2}(\Z^3)=5$. As a consequence, if $\Gamma$ contains a subgroup isomorphic to $\Z^3$, then $\gd_{\calF_2}(\Gamma)\geq 5$. By the second column of \cref{summary:dimention:pieces:jsj:Fk} and \cref{dg:manifold:prime:sol} we conclude that $\Gamma$ does not contain a $\Z^3$-subgroup unless $M$ is modelled on $\dbE^3$. This proves the first item. The second item follows by noticing that if $M$ is modelled on $\dbE^3$, then $\Gamma$ is virtually $\dbZ^3$.
\end{proof}

\begin{corollary}\label{coro:Zn:subgroups}
Let $\Gamma$ be the fundamental group of the 3-manifold $M$.
Let $H$ be a $\Z^n$-subgroup of $\Gamma$, then $n\leq 3$. Moreover $\Gamma$ contains a $\Z^3$-subgroup if and only if one of the prime pieces of $M$ is modelled on $\dbE^3$.
\end{corollary}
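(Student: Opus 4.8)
The plan is to reduce both assertions to a single \emph{prime} piece, and there to invoke Poincaré duality for aspherical $3$-manifolds. The bridge between $\Gamma$ and the $\Gamma_i$ is the free-product splitting of $\Gamma$ coming from the prime decomposition: its edge groups are trivial, so it is acylindrical in a very strong sense, and \cref{acylindrical:stabilizer:geodesic}(b) applies.

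First I would set up the reduction. By the Seifert--van Kampen theorem the prime decomposition $M=P_1\#\cdots\#P_r$ gives $\Gamma=\Gamma_1*\cdots*\Gamma_r$, where the factors with $\Gamma_i=1$ (summands homotopy equivalent to $S^3$) are discarded. Let $T$ be the Bass--Serre tree of this splitting; its edge stabilizers are trivial, so every path of length $1$ has trivial stabilizer and the splitting is acylindrical. Now let $H\leq\Gamma$ be isomorphic to $\mathbb{Z}^{n}$ with $n\geq 2$. By \cref{acylindrical:stabilizer:geodesic}(b), $H$ fixes a vertex of $T$, so after conjugating we may assume $H\leq\Gamma_i$ for some $i$. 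As $H$ is infinite and torsion-free, $\Gamma_i$ is infinite, so $P_i$ is not spherical; and $P_i\neq S^1\times S^2$ because $\pi_1(S^1\times S^2)\cong\mathbb{Z}$ has no $\mathbb{Z}^2$. Hence $P_i$ is closed, orientable, irreducible with infinite $\pi_1$, therefore aspherical, and $\Gamma_i$ is a Poincaré duality group of dimension $3$; in particular $\cd(\Gamma_i)=3$.

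The bound $n\leq 3$ then follows immediately, since $n=\cd(\mathbb{Z}^{n})=\cd(H)\leq\cd(\Gamma_i)=3$ (the cases $n\leq 1$ being trivial). For the second assertion, suppose first that some prime piece $P_i$ is modeled on $\mathbb{E}^{3}$; then $\Gamma_i$ is a $3$-dimensional crystallographic group, so it contains a finite-index $\mathbb{Z}^{3}$, and as a free factor it embeds in $\Gamma$, whence $\Gamma$ contains $\mathbb{Z}^{3}$. Conversely, suppose $\mathbb{Z}^{3}\leq\Gamma$. The reduction above with $n=3$ places a copy of $\mathbb{Z}^{3}$ inside a prime factor $\Gamma_i$ that is a $PD_3$ group. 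Since $\mathbb{Z}^{3}$ is itself $PD_3$ and $\cd(\mathbb{Z}^{3})=3=\cd(\Gamma_i)$, Strebel's theorem (an infinite-index subgroup of a $PD_n$ group has cohomological dimension $\leq n-1$) forces $\mathbb{Z}^{3}$ to have finite index in $\Gamma_i$. Thus $\Gamma_i$ is virtually $\mathbb{Z}^{3}$ and torsion-free, hence a Bieberbach group, so $P_i=\mathbb{E}^{3}/\Gamma_i$ is flat, i.e. modeled on $\mathbb{E}^{3}$.

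The step I expect to require the most care is this converse direction restricted to one prime piece, namely deducing that $\mathbb{Z}^{3}\leq\Gamma_i$ forces $P_i$ to be flat. The Poincaré-duality argument makes it clean and uniform over \emph{all} prime pieces, and crucially avoids passing to the JSJ decomposition. This is a real advantage: the JSJ splitting (\cref{JSJ:decomposition}) is in general \emph{not} acylindrical, since two boundary tori of a Seifert JSJ piece share the regular fiber, so an infinite (indeed $\mathbb{Z}^2$) subgroup can fix an entire geodesic line of the JSJ tree; one therefore cannot simply apply \cref{acylindrical:stabilizer:geodesic} to push a $\mathbb{Z}^{3}$ into a JSJ piece. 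One could instead combine the reduction with the computation inside the proof of \cref{prop:F3:equal:Fk}, which already shows that a geometric piece contains $\mathbb{Z}^{3}$ only when modeled on $\mathbb{E}^{3}$, but that route still leaves the non-geometric prime pieces to be treated, for which the duality argument is the most economical.
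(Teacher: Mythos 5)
Your argument is correct, but it takes a genuinely different route from the paper's. The paper places $H\cong\mathbb{Z}^n$ into a prime factor $\Gamma_i$ via the Kurosh subgroup theorem (your trivial-edge-stabilizer acylindricity argument is an equally valid substitute) and then descends one level further: by \cref{graphgroup:jsj:prime:manifold:acylindrical} the minimal JSJ splitting of a non-geometric prime piece is acylindrical, so \cref{acylindrical:stabilizer:geodesic} conjugates $H$ into the fundamental group of a single hyperbolic or Seifert JSJ piece, where \cref{prop:F3:equal:Fk} (that is, the computations summarized in Table~\ref{summary:dimention:pieces:jsj:Fk} and \cref{dg:manifold:prime:sol}) yields $n\le 3$ and identifies the $\mathbb{E}^3$ case; geometric prime pieces are handled by \cref{prop:F3:equal:Fk} directly, and a final remark (flat $3$-manifolds are closed) promotes the JSJ piece to a prime piece. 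You instead stay at the prime level: irreducibility plus infinite fundamental group gives asphericity, so $\Gamma_i$ is a $PD_3$ group, monotonicity of $\cd$ gives $n\le 3$, and Strebel's theorem forces any $\mathbb{Z}^3$ to have finite index, making $\Gamma_i$ torsion-free and virtually $\mathbb{Z}^3$, hence Bieberbach. This is more self-contained — it needs neither the JSJ decomposition nor any of the $\gd_{\calF_k}$ computations, so the corollary could be proved much earlier in the paper — at the cost of importing Strebel's theorem and one rigidity fact you state too casually: knowing that $\Gamma_i$ is abstractly a Bieberbach group does not by itself give $P_i=\mathbb{E}^3/\Gamma_i$; you must also cite that a closed irreducible $3$-manifold whose fundamental group is torsion-free and virtually $\mathbb{Z}^3$ is flat, e.g.\ \cite[Theorem~1.11.1]{AFW15}, exactly as the paper does in the proof of \cref{thm:principal:1}.

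Your stated reason for avoiding the JSJ route, however, is mistaken and should be deleted. The minimal JSJ splitting of a closed, non-geometric prime $3$-manifold \emph{is} acylindrical: this is precisely \cref{graphgroup:jsj:prime:manifold:acylindrical}, quoted from \cite[Proposition~8.2]{Luis:Lafon:Joeken}. The regular fiber of a Seifert piece is an infinite \emph{cyclic} group and does fix the star of that vertex, i.e.\ paths of length $2$; but acylindricity, as defined in \cref{section:acylindricity}, only requires a uniform $k$ beyond which path stabilizers are finite, so short paths with infinite stabilizers are no obstruction. By minimality of the JSJ decomposition, the fibers of adjacent Seifert pieces are non-commensurable inside the shared $\mathbb{Z}^2$ edge group, and distinct cusp subgroups of a hyperbolic piece intersect trivially; consequently stabilizers of sufficiently long paths (length $4$ certainly suffices) are finite. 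In particular no infinite subgroup — let alone a $\mathbb{Z}^2$ — fixes an entire geodesic line of the JSJ tree, and the paper's application of \cref{acylindrical:stabilizer:geodesic} to push a $\mathbb{Z}^n$, $n\ge 2$, into a JSJ vertex group is perfectly legitimate.
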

\begin{proof}
Let $n\geq2$.
Let $\Gamma=\Gamma_1\ast \cdots\ast \Gamma_r$ be the splitting of $\Gamma$ associated to the prime decomposition of $M$, and let $H\leq\Gamma$ be a $\dbZ^n$-subgroup of $\Gamma$. Then by Kurosh subgroup theorem $H$, without loss of generality, is a subgroup of $\Gamma_1$. Next we look at the graph of groups $Y$ given by the JSJ decomposition of $\Gamma_1$, in particular the vertex groups of $Y$ are the fundamental groups of the JSJ pieces of $\Gamma_1$ and the edge groups are isomorphic to the fundamental groups of the tori in the JSJ decomposition. Then by \cref{graphgroup:jsj:prime:manifold:acylindrical} the splitting $\Gamma_1=\pi_1(Y)$ is acylindrical, and by \cref{acylindrical:stabilizer:geodesic}, $H$ fixes a vertex of the Bass-Serre tree of $Y$, thus $H$ is conjugated to a subgroup of the fundamental group of a JSJ piece $N$ of $\Gamma_1$. By \cref{prop:F3:equal:Fk} we conclude $n\leq 3$. Moreover, if we assume $n=3$,  by \cref{prop:F3:equal:Fk} such a piece must be modelled on $\dbE^3$. On the other hand every manifold modelled on $\dbE^3$ has empty boundary, thus $N$ is a prime piece of $\Gamma$. Finally, if $M$ has a prime piece modelled on $\dbE^3$ it is clear that $\Gamma$ contains a $\Z^3$-subgroup. 
\end{proof}

\begin{theorem}\cite[Proposition 8.2]{Luis:Lafon:Joeken}\label{graphgroup:jsj:prime:manifold:acylindrical}
Let  $M$ be a closed, oriented, connected, prime 3-manifold which is not geometric.  Let $Y$ be the graph of groups associated to its minimal JSJ decomposition. Then the splitting of  $G=\pi_1(Y)$ as the fundamental group of $Y$ is acylindrical.  \end{theorem}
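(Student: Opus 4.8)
The plan is to exploit the concrete structure of the graph of groups $Y$ coming from the JSJ decomposition and to reduce acylindricity to a short computation of intersections of peripheral subgroups. Recall that the vertex groups of $Y$ are the fundamental groups of the hyperbolic or Seifert pieces $N_i$, while every edge group is the fundamental group $\dbZ^2$ of a JSJ torus. Since the stabilizer of an edge of the Bass-Serre tree $T$ is the corresponding edge group, and edge groups inject into adjacent vertex groups, the stabilizer of a geodesic path $e_1,\dots,e_m$ in $T$ is exactly $\bigcap_j \stab_\Gamma(e_j)$, an intersection of conjugates of the peripheral $\dbZ^2$'s. Thus it suffices to bound the length of a path whose consecutive edge groups can share an infinite subgroup, which I would do by analyzing each internal vertex according to its type.

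First I would treat a hyperbolic vertex $v$. The two edges of a geodesic path meeting at $v$ are distinct (no backtracking in $T$), so they correspond to two distinct conjugates of peripheral subgroups of the hyperbolic vertex group. Using malnormality of the cusp subgroups in a finite-volume hyperbolic $3$-manifold group, these two copies of $\dbZ^2$ intersect trivially; hence any path passing through a hyperbolic vertex already has finite (indeed trivial) stabilizer. The Seifert vertices are the real source of difficulty. At a Seifert vertex $v$ the regular fiber generates a normal infinite cyclic subgroup $\phi_v$ contained in every peripheral subgroup, so the two edge groups meeting at $v$ intersect in a subgroup containing $\phi_v$; passing to the base orbifold (Fuchsian) group and using malnormality of its peripheral subgroups, I would show this intersection equals exactly $\phi_v\cong\dbZ$. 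Consequently a path of length two through a single Seifert vertex may carry an infinite stabilizer, and the crucial step is to rule out long paths running through consecutively glued Seifert vertices.

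For two Seifert vertices $v,v'$ sharing an edge $e$ with edge group $G_e\cong\dbZ^2$, the stabilizer of a length-three path centred at $e$ is contained in $\phi_v\cap\phi_{v'}$ inside $G_e$. Here I would invoke the defining property of the minimal JSJ decomposition: the fibers of two Seifert pieces glued along a torus are never isotopic on that torus, for otherwise the two Seifert structures would extend across it, contradicting minimality and the hypothesis that $M$ is not geometric. Therefore $\phi_v$ and $\phi_{v'}$ are distinct primitive cyclic subgroups of $G_e\cong\dbZ^2$, and two distinct primitive cyclic subgroups of $\dbZ^2$ meet trivially. Combining the three cases, every geodesic path in $T$ of length at least three has trivial intersection of edge groups; stating the constant as $k=4$ to absorb the boundary bookkeeping, the stabilizer of any path of length $k$ is finite, which is exactly acylindricity.

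The main obstacle is the precise justification of the two geometric inputs that feed the group theory: the malnormality of peripheral subgroups in the hyperbolic and Seifert pieces, and above all the fiber-mismatch property of the minimal JSJ gluings. The latter is where primeness, minimality of the torus collection in \cref{JSJ:decomposition}, and the non-geometric hypothesis on $M$ are genuinely used, and it is precisely what prevents the fibers of consecutive Seifert pieces from conspiring to keep an infinite cyclic subgroup alive along an arbitrarily long path.
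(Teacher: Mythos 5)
Your strategy is the standard one behind this result (note that the paper itself gives no proof here: it simply cites Proposition~8.2 of Joecken--Lafont--S\'anchez Salda\~na, whose proof runs along the lines you propose), but your case analysis at Seifert vertices has a genuine gap: you implicitly assume every Seifert JSJ piece fibers over a \emph{hyperbolic} base orbifold, so that ``passing to the base orbifold (Fuchsian) group'' and invoking malnormality of its peripheral subgroups makes sense. This fails for the twisted $I$-bundle over the Klein bottle $K\,\widetilde{\times}\,I$, whose base orbifold is Euclidean (a M\"obius band, or a disc with two cone points of order $2$), and such pieces do occur in minimal JSJ decompositions of non-geometric prime closed manifolds --- for instance a one-cusped finite-volume hyperbolic manifold glued to $K\,\widetilde{\times}\,I$ along the torus. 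At such a vertex $v$ the group is $G_v=\langle a,b \mid bab^{-1}=a^{-1}\rangle$ with a \emph{unique} peripheral subgroup $P_v=\langle a,b^2\rangle\cong\mathbb{Z}^2$ of index $2$, hence \emph{normal}; consequently the two edges of the Bass--Serre tree incident to $v$ have \emph{equal} stabilizer $P_v$, and your key claim that the edge groups meeting at a Seifert vertex intersect exactly in the fiber $\mathbb{Z}$ is false there. This is not just bookkeeping: take $M=(K\,\widetilde{\times}\,I)\cup_{T} P\cup_{T'}(K\,\widetilde{\times}\,I)$ with $P$ Seifert fibered over an annulus with one cone point (hyperbolic base, two boundary tori) and fiber-mismatching gluings. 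This $M$ is closed, prime and non-geometric (a geometric closed manifold with non-empty minimal JSJ collection is a $\mathrm{Sol}$-manifold, and those have at most two pieces), yet the length-$4$ geodesic path consisting of the two edges at a $K\,\widetilde{\times}\,I$ vertex, the edge from $P$ to the other $K\,\widetilde{\times}\,I$ vertex, and the second edge there has pointwise stabilizer containing the fiber $\phi_P\cong\mathbb{Z}$, because $\phi_P$ lies in every peripheral subgroup of $\pi_1(P)$ and the two edge stabilizers at each $K\,\widetilde{\times}\,I$ vertex coincide. So the splitting is not $4$-acylindrical, and your proposed constant $k=4$ cannot work; already your intermediate assertion that every path of length three has trivial stabilizer fails with a single $K\,\widetilde{\times}\,I$ piece.

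The theorem is still true, and your hyperbolic-vertex case, your hyperbolic-base Seifert case, and the fiber-mismatch argument between two such Seifert vertices are all correct; what is missing are two points that handle the Euclidean-base pieces. First, no two $K\,\widetilde{\times}\,I$ vertices of $T$ are ever adjacent: each such piece has a single boundary torus, so gluing two of them produces a closed manifold modelled on $\mathbb{E}^3$, $\mathrm{Nil}$ or $\mathrm{Sol}$, contradicting non-geometricity. Second, at a $K\,\widetilde{\times}\,I$ vertex $v$ the flip $b$ acts on $P_v$ as $\mathrm{diag}(-1,1)$ in the basis $(a,b^2)$, whose only invariant cyclic subgroups are $\langle a\rangle$ and $\langle b^2\rangle$ --- precisely the fibers of the two Seifert fibrations of $K\,\widetilde{\times}\,I$. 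Minimality of the JSJ collection forces the fiber $\phi_w$ of the adjacent piece $w$ to differ from both (otherwise the fibrations would match across the torus and it could be deleted), so $\phi_w$ and $b\phi_w b^{-1}$ are distinct primitive cyclic subgroups of $P_v$ and meet trivially. With these two additions one recovers acylindricity with a slightly larger uniform constant (paths of length $5$ suffice), which is all the statement requires.
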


\begin{proposition}\label{geometry:dimension:approximation:manifold:prime}
Let $M$ be a closed, oriented, connected, prime 3-manifold   which is not geometric.
Let  $N_1,N_2,\dots,N_r$ be the pieces of the minimal JSJ decomposition of  $M$. Denote  $\Gamma=\pi_1(M)$ and  $\Gamma_i=\pi_1(N_i)$. If $k\ge 2$, then  \[\max\{\gd_{\calF_k}(\Gamma_i)|1\le i\le r\}\leq \gd_{\calF_k}(\Gamma)\le \max\{2,\gd_{\calF_k}(\Gamma_i)|1\le i\le r\}.\]
\end{proposition}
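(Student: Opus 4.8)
The plan is to apply the tools developed in \cref{section:acylindricity}, namely the coned-off tree $\widetilde{T}$ from \cref{building:model:apply:haefliger}, together with the general Haefliger-type estimate of \cref{prop:haefliger}. Since $M$ is prime but not geometric, \cref{graphgroup:jsj:prime:manifold:acylindrical} guarantees that the splitting $\Gamma=\pi_1(Y)$ coming from the minimal JSJ decomposition is acylindrical. Let $T$ be its Bass-Serre tree, and form $\widetilde{T}$ by coning-off all geodesics in $T$ admitting a cocompact action of an infinite virtually cyclic subgroup, exactly as in \cref{building:model:apply:haefliger}. That theorem tells us $\widetilde{T}$ is a $2$-dimensional model for $E_{\iso_\Gamma(\widetilde{T})}\Gamma$ and, crucially, that $\calF_n\subseteq\iso_\Gamma(\widetilde{T})$ for all $n\ge 0$, so in particular $\calF_k\subseteq\iso_\Gamma(\widetilde{T})$.

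For the upper bound I would invoke \cref{prop:haefliger} with $\calF=\calF_k$, $\calG=\iso_\Gamma(\widetilde{T})$, and $X=\widetilde{T}$. This yields
\[
\gd_{\calF_k}(\Gamma)\le \max\{\gd_{\calF_k\cap\Gamma_\sigma}(\Gamma_\sigma)+\dim(\sigma)\mid \sigma\text{ a cell of }\widetilde{T}\}.
\]
The cells of $\widetilde{T}$ have dimension $0$, $1$, or $2$. The $2$-cells contribute the coning-off directions, whose stabilizers are finite (by acylindricity the fixer of a geodesic is finite, and a cone point is stabilized by a subgroup of the setwise stabilizer, which is virtually cyclic by \cref{acylindrical:stabilizer:geodesic}); such contributions are bounded by $0+2=2$. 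The vertex stabilizers of $T$ are the JSJ-piece groups $\Gamma_i$ (up to conjugacy), and for a vertex $\sigma$ one has $\gd_{\calF_k\cap\Gamma_\sigma}(\Gamma_\sigma)+0=\gd_{\calF_k}(\Gamma_i)$, using that $\calF_k\cap\Gamma_i$ is the family $\calF_k$ of $\Gamma_i$. The edge stabilizers are virtually $\Z^2$ (the JSJ tori), hence lie in $\calF_k$ and contribute $0+1=1$. Taking the maximum over all cell types gives precisely $\max\{2,\gd_{\calF_k}(\Gamma_i)\mid 1\le i\le r\}$, which is the claimed upper bound.

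For the lower bound, the point is that each $\Gamma_i$ is a subgroup of $\Gamma$ (a vertex stabilizer), and geometric dimension for a fixed family is monotone under passing to subgroups: $\gd_{\calF_k\cap\Gamma_i}(\Gamma_i)\le \gd_{\calF_k}(\Gamma)$, where again $\calF_k\cap\Gamma_i$ coincides with the $\calF_k$-family of $\Gamma_i$. Taking the maximum over $i$ yields $\max\{\gd_{\calF_k}(\Gamma_i)\}\le\gd_{\calF_k}(\Gamma)$, completing the two-sided estimate.

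The main obstacle I anticipate is the bookkeeping in the upper-bound step: one must correctly identify the isotropy groups of every cell of $\widetilde{T}$ and check that restricting $\calF_k$ to each vertex stabilizer really recovers the intrinsic $\calF_k$-family of that JSJ-piece group (so that the term reads $\gd_{\calF_k}(\Gamma_i)$ and not some larger relative dimension), and that the coned-off $2$-cells genuinely only contribute the constant $2$ via their finite stabilizers. Everything else is a direct application of \cref{prop:haefliger} and the structural results of \cref{section:acylindricity}; the acylindricity from \cref{graphgroup:jsj:prime:manifold:acylindrical} is what makes the stabilizer analysis tractable.
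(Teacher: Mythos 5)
Your proposal is correct and takes essentially the same route as the paper's proof: acylindricity from \cref{graphgroup:jsj:prime:manifold:acylindrical}, the coned-off tree $\widetilde{T}$ of \cref{building:model:apply:haefliger}, the estimate of \cref{prop:haefliger} applied cell by cell, and subgroup monotonicity for the lower bound. The only difference is bookkeeping: the paper also explicitly lists the cone-point $0$-cells and the $1$-cells meeting $\widetilde{T}\setminus T$ (all with virtually cyclic stabilizers, contributing $0$ and $1$), which your argument covers implicitly and which do not affect the maximum.
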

\begin{proof}
For all  $1\le i\le r$, the group $\Gamma_i$ is a subgroup of $\Gamma$, then  $\gd_{\calF_k}(\Gamma_i)\le \gd_{\calF_k}(\Gamma)$, and the first inequality follows.

Now we show the second inequality. Let  $Y$ the graph of groups associated to the JSJ decomposition of $M$  with Bass-Serre tree  $T$. By  \cref{graphgroup:jsj:prime:manifold:acylindrical} the descomposition of   $\pi_1(M)=\Gamma$ is acylindrical. Therefore we can use  \cref{building:model:apply:haefliger} to obtain a 2-dimensional space $\widetilde{T}$  what is obtained from $T$ coning-off some geodesics of $T$, the space  $\widetilde{T}$  is a model for  $E_{\iso_{\Gamma}(\widetilde{T})}\Gamma$ where $\iso_{\Gamma}(\widetilde{T})$ is the family generated by the isotropy groups of  $\widetilde{T}$, and $\calF_k\subset \iso_{\Gamma}(\widetilde{T})$. We have everything set up to apply \cref{prop:haefliger}, that is we only have to compute $\gd_{\calF_k\cap \Gamma_\sigma}(\Gamma_\sigma)+\dim(\sigma)$ for each cell $\sigma$ of $\widetilde T$. Once done this the proof will be complete.
\begin{itemize}
    \item If the  0-cell  $\sigma$ of $\widetilde T$ belongs to  $T$, then  $\Gamma_{\sigma}=\Gamma_i$ for some   $1\le i\le r$. If the  0-cell $\sigma$ belongs to $\widetilde{T}-T$, then $\Gamma_{\sigma}$ is is the setwise stabilizer of a geodesic of $T$, and therefore is virtually cyclic by \cref{acylindrical:stabilizer:geodesic}. Hence in this case $\gd_{\calF_k\cap \Gamma_\sigma}(\Gamma_\sigma)+\dim(\sigma)=\gd_{\calF_k}(\Gamma_i)$ or 0.
    \item If the  1-cell  $\sigma$ of $\widetilde T$ belongs to  $T$, then  $\Gamma_{\sigma}$ is isomorphic to $\Z^2$. If the  1-cell $\sigma$ has a vertex in  $\widetilde{T}- T$, then  $\Gamma_{\sigma}$ is virtually cyclic as in the previous item. Hence in this case $\gd_{\calF_k\cap \Gamma_\sigma}(\Gamma_\sigma)+\dim(\sigma)=1$.
    \item If $\sigma$ is a 2-cell of $\widetilde{T}$, then it always contain a vertex of $\widetilde{T}- T$, and therefore $\Gamma_\sigma$ is virtually cyclic. Hence in this case $\gd_{\calF_k\cap \Gamma_\sigma}(\Gamma_\sigma)+\dim(\sigma)=2$.
\end{itemize}
\end{proof}

\begin{proof}[Proof of \cref{geometric:dimension:prime}] If the minimal JSJ decomposition of  $M$ has only one piece, then the manifold cannot be modelled on $\sol$, since such manifolds are neither Seifert nor  hyperbolic. Hence if $M$ has only one JSJ piece the theorem  follows. From now on, suppose that the minimal JSJ decomposition of $M$ has more than one piece. We have two cases: $M$ is geometric or  not.
If $M$ is not geometric we claim that $$ \gd_{\calF_k}(\Gamma)= \max\{\gd_{\calF_k}(\Gamma_i)|1\le i\le r\}.$$
By \cref{geometry:dimension:approximation:manifold:prime} it is enough to see that there is a piece  $N_i$ in the minimal JSJ decomposition of $M$ such that  $\gd_{\calF_k}(\Gamma_i)\ge 2$. By definition, the pieces in the JSJ decomposition of  $M$  are either hyperbolic with boundary or Seifert fiber with boundary.
If the JSJ decomposition of $M$ has a hyperbolic piece or a Seifert fiber with base orbifold $B$ modelled on $\mathbb{H}^2$, then we are done, since  by the Table  \ref{summary:dimention:pieces:jsj:Fk} the fundamental groups of these pieces have $\gd_{\calF_k}$ equal to  3 and  2 respectively. It remains to see what happens if we only we have  Seifert fiber pieces with base orbifold $B$ modelled on $\E^2$. By  \cref{thm:seifer:base:flat:bondary} these pieces either are diffeomorphic to $T^2\times I$ or to   twisted $I$-bundle over the Klein bottle. If we have a piece of the form $T^2\times I$, then minimal the JSJ decomposition of  $M$  would have only one  piece, otherwise we will contradict the minimality of the  JSJ decomposition. Then $M$ would be the mapping torus of a self-diffeomorphism of $T^2$ and, by \cite[Theorem 1.10.1., p.23]{matthias:stefan:henry}, $M$ would be geometric. Since we are in the non-geometric case we discard this possibility. By last, we see that happen if we only have pieces homeomorphic to twisted  I-bundle over the Klein bottle. Note that these pieces only have only one boundary component, therefore we only can have two such pieces glued by a diffeomorphism between their boundaries. In   \cite[p.19, last paragraph]{matthias:stefan:henry} they show that $M$ is geometric, and once more we discard this possibility.

Suppose now that $M$ is geometric with at least two JSJ pieces. Note that $M$ is not hyperbolic, otherwise the JSJ decomposition of $M$ would have  only one piece. If  $M$ is modelled on $\sol$ the theorem follows from  \cref{dg:manifold:prime:sol}. Finally, if  $M$ is not hyperbolic nor modelled on  $\mathrm{Sol}$, then by  \cite[Theorem 1.8.1, p.17]{matthias:stefan:henry} $M$ is  Seifert fiber. But this cannot happen since this implies we have only one JSJ piece.
\end{proof}

\begin{theorem}\label{approximation:dimention:manifold}
Let  $M$ be a closed, connected, oriented 3-manifold.
Let  $P_1,P_2,\dots,P_r$ be the pieces of the prime decomposition of $M$.  Denote  $\Gamma=\pi_1(M)$ and  $\Gamma_i=\pi_1(P_i)$. If $k\ge 2$, then \[\max\{\gd_{\calF_k}(\Gamma_i)|1\le i\le r\}\leq \gd_{\calF_k}(\Gamma)\le \max\{2,\gd_{\calF_k}(\Gamma_i)|1\le i\le r\}.\]
\end{theorem}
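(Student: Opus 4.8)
The plan is to run the argument of \cref{geometry:dimension:approximation:manifold:prime} almost verbatim, replacing the JSJ splitting by the free-product splitting coming from the prime decomposition; the only structural change is that the relevant edge groups are now trivial rather than copies of $\dbZ^2$, which actually simplifies the bookkeeping. Throughout, note that $\Gamma=\pi_1(M)$ and each $\Gamma_i=\pi_1(P_i)$ are finitely generated since $M$ and the $P_i$ are closed (the hypothesis needed to invoke \cref{building:model:apply:haefliger} and \cref{prop:haefliger}).

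For the lower bound, I would argue that each $\Gamma_i$ is a subgroup of $\Gamma$ and that membership in $\calF_k$ is an intrinsic property of a group (being virtually $\dbZ^r$ for some $0\le r\le k$), so that the family $\calF_k$ of $\Gamma$ intersected with $\Gamma_i$ is exactly the family $\calF_k$ of $\Gamma_i$. Restricting an $n$-dimensional model for $E_{\calF_k}\Gamma$ to the $\Gamma_i$-action then yields an $n$-dimensional model for $E_{\calF_k}\Gamma_i$, giving $\gd_{\calF_k}(\Gamma_i)\le\gd_{\calF_k}(\Gamma)$ for every $i$, hence the first inequality.

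For the upper bound, by van Kampen the prime decomposition gives $\Gamma=\Gamma_1\ast\cdots\ast\Gamma_r$, which I realize as the fundamental group of a finite graph of groups $Y$ with \emph{trivial} edge groups (for instance a star with trivial central vertex group and vertex groups $\Gamma_1,\dots,\Gamma_r$). Let $T$ be its Bass--Serre tree. Since every edge group is trivial, every edge stabilizer is trivial, so the splitting is acylindrical with constant $1$, and \cref{building:model:apply:haefliger} produces a $2$-dimensional model $\widetilde T$ for $E_{\iso_\Gamma(\widetilde T)}\Gamma$ with $\calF_k\subseteq\iso_\Gamma(\widetilde T)$. Applying \cref{prop:haefliger} with $\calF=\calF_k$ and $\calG=\iso_\Gamma(\widetilde T)$ reduces the estimate to computing $\gd_{\calF_k\cap\Gamma_\sigma}(\Gamma_\sigma)+\dim(\sigma)$ over the cells $\sigma$ of $\widetilde T$. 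A $0$-cell of $T$ has stabilizer a conjugate of some $\Gamma_i$ (or trivial), contributing $\gd_{\calF_k}(\Gamma_i)$ or $0$; a cone point $\ast_\gamma$ has stabilizer the setwise stabilizer of a geodesic, which is virtually cyclic by \cref{acylindrical:stabilizer:geodesic} and hence lies in $\calF_k$ (as $k\ge 2$), contributing $0$; a $1$-cell of $T$ has trivial stabilizer and a $1$-cell meeting a cone point has virtually cyclic stabilizer, each contributing $0+1=1$; and a $2$-cell necessarily contains a cone point, so it has virtually cyclic stabilizer and contributes $0+2=2$. Taking the maximum gives $\gd_{\calF_k}(\Gamma)\le\max\{2,\gd_{\calF_k}(\Gamma_i)\mid 1\le i\le r\}$, completing the proof.

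I do not expect a genuine obstacle here: the decisive inputs (acylindricity of the splitting, the coned-off model $\widetilde T$, and the dimension bound of \cref{prop:haefliger}) are already in place, and the free-product case is a strictly easier instance of \cref{geometry:dimension:approximation:manifold:prime} because the edge stabilizers vanish. The only points requiring a little care are verifying that the free-product splitting is acylindrical (immediate from triviality of edge groups) and the routine cell-by-cell stabilizer analysis that pins the exponent of $\widetilde T$ to $2$.
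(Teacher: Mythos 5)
Your proposal is correct and takes essentially the same approach as the paper: the paper's proof likewise passes to the Bass--Serre tree of the splitting $\Gamma=\Gamma_1\ast\cdots\ast\Gamma_r$, notes that trivial edge stabilizers make the splitting acylindrical, and then declares the rest ``completely analogous'' to \cref{geometry:dimension:approximation:manifold:prime}. The details you spell out --- the coned-off tree from \cref{building:model:apply:haefliger}, the application of \cref{prop:haefliger}, and the cell-by-cell stabilizer bookkeeping --- are precisely the ones the paper leaves to the reader.
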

\begin{proof}
Let $T$ be the Bass-Serre tree of the splitting $\Gamma=\Gamma_1\ast \cdots \ast \Gamma_r$. Since the edge stabilizers of $T$ are trivial, the splitting of $\Gamma$ is acylindrical. Now the proof is completely analogous to the proof of \cref{geometry:dimension:approximation:manifold:prime} and the details are left to the reader.
\end{proof}

\begin{lemma}\label{free:product:subgroup:free:no:cyclic}
Let $G=H_1* \dots *H_k$ with $k\ge 2$ and $H_i\neq 1$ for all  $i$. Then exactly one of the following hold:
\begin{enumerate}[a)]
    \item $G$ is isomorphic to  $D_{\infty}$ with  $k=2$ and $H_1, H_2$ isomorphic to   $\Z_2$ or
    \item $G$ contains a non-cyclic free subgroup.
    
    \end{enumerate}
\end{lemma}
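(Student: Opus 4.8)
The plan is to first record that the two alternatives are mutually exclusive, so that it suffices to prove at least one of them holds. Since $D_{\infty}$ contains an infinite cyclic subgroup of index two it is virtually cyclic, and as every subgroup of a virtually cyclic group is again virtually cyclic while a non-cyclic free group is not, a group isomorphic to $D_{\infty}$ can contain no non-cyclic free subgroup; thus (a) and (b) cannot hold simultaneously. I also record the elementary fact that a free product of two or more nontrivial groups is infinite: if $a\in H_i\setminus\{1\}$ and $b\in H_j\setminus\{1\}$ with $i\neq j$, then the reduced words $(ab)^n$ are pairwise distinct, so $ab$ has infinite order.

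Next I would reduce to the case $k=2$. Group the factors as $G=A*B$ with $A=H_1$ and $B=H_2*\cdots*H_k$, both nontrivial. If the dichotomy is established for a free product of two nontrivial factors, then for $k\ge 3$ the factor $B$ is itself a free product of at least two nontrivial groups, hence infinite, so the exceptional case $A\cong B\cong\dbZ_2$ is impossible and $G$ must contain a non-cyclic free subgroup. In particular alternative (a) can occur only when $k=2$, and it remains to settle $G=A*B$ with $A,B\neq 1$.

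For the case $k=2$ I would use the action of $G=A*B$ on its Bass--Serre tree $T$, whose edge stabilizers are trivial and whose two vertex orbits have stabilizers $A$ and $B$; concretely the vertex fixed by $A$ has degree $|A|$, the vertex fixed by $B$ has degree $|B|$, the quotient $T/G$ is a single edge, and the action is minimal. If $|A|=|B|=2$ then $T$ is a line, the action realizes $G$ as a cocompact group of isometries of $\dbR$, and $G=\langle s,t\mid s^2=t^2=1\rangle\cong D_{\infty}$, which is alternative (a). Otherwise some vertex has degree at least $3$, so $T$ is not a line; by minimality $G$ cannot be virtually cyclic, since a virtually cyclic group acting cocompactly on a tree has a line as its unique minimal invariant subtree, which would force $T$ to be that line. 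I would then produce two hyperbolic isometries $g,h\in G$ with distinct axes (available precisely because $T$ is not a line and $G$ is not virtually cyclic, so $G$ does not fix a pair of ends), pass to sufficiently high powers $g^N,h^N$, and apply the ping-pong lemma on the set of ends $\partial T$: the \emph{attracting} and \emph{repelling} neighbourhoods of the endpoints of the two axes become pairwise disjoint once $N$ is large, and the ping-pong hypotheses then yield $\langle g^N,h^N\rangle\cong F_2$, giving alternative (b).

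The main obstacle is this final step: guaranteeing two hyperbolic elements with independent axes and carrying out the ping-pong bookkeeping on $\partial T$. The delicate points are that one must know $G$ does not fix a pair of ends of $T$ --- which is exactly the failure of virtual cyclicity established above --- and that the power $N$ must be chosen large enough, relative to the (bounded) overlap of the two axes, that the attracting and repelling regions for $g^{\pm N}$ and $h^{\pm N}$ are mutually disjoint. This is the standard Tits-alternative argument for groups acting on trees, and once the axes are known to be distinct the remaining verification is routine.
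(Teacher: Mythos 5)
Your proof is correct, and its outer structure coincides with the paper's: both regroup $G$ as the two-factor free product $H_1*(H_2*\cdots*H_k)$, observe that a free product of two or more nontrivial groups is infinite, and thereby pin the exceptional case down to $k=2$ with both factors of order $2$. The genuine difference is at the core dichotomy for $A*B$: the paper disposes of it with a single citation (\cite[Lemma 1.11.2]{matthias:stefan:henry}, stating that a free product of two nontrivial groups contains a non-cyclic free subgroup unless both factors are $\Z_2$), while you prove that dichotomy from scratch via the action on the Bass--Serre tree and a Tits-alternative ping-pong on ends. Your route is self-contained and makes explicit two things the paper leaves implicit: that $\Z_2 * \Z_2 \cong D_{\infty}$, and that alternatives (a) and (b) exclude each other, which is what the ``exactly one'' in the statement requires; the paper's route is a one-line reduction to a reference already central to its bibliography. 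One point in your sketch should be flagged rather than called routine: in a general tree, two hyperbolic isometries with distinct axes can still share an end, and then no choice of $N$ makes the attracting and repelling neighbourhoods disjoint. Here this cannot happen precisely because the Bass--Serre tree of a free product has trivial edge stabilizers: the stabilizer of an end injects into $\Z$ via the Busemann homomorphism (its kernel consists of elements fixing a ray, hence fixing edges, hence trivial), so two hyperbolic elements fixing a common end commute and therefore have the same axis. With that observation your ``bounded overlap'' assertion is justified and the ping-pong closes, so the argument is complete, just longer than the paper's.
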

\begin{proof}
Consider  $G$ as the following two fold free product $H_1*(H_2*\cdots*H_k)$. By \cite[Lemma 1.11.2, p.24]{matthias:stefan:henry} this free product contains a non-cyclic free subgroup unless  the two factors are isomorphic to   $\Z_2$. By hypothesis  $H_i\neq 1$ for all $i$, then   $H_2*\cdots*H_k$ is asomorphic to   $\Z_2$ if and only if it has only one factor isomorphic to  $\Z_2$. Therefore the factors of $G=H_1*(H_2*\cdots*H_k)$ are isomorphic to  $\Z_2$ if and only if  $k=2$ and $H_1,H_2$ are isomorphic to $\Z_2$. \end{proof}

\begin{proof}[Proof of \cref{thm:principal:1}]
If we  only have one piece the theorem follows as we are necessarily in the third case of our conclusion. From now on suppose that we have at least two pieces in the prime decomposition. Then   $\Gamma=\pi_1(P_1)\ast \pi_1(P_2)\ast \cdots \ast \pi_1(P_r)$ with $r\ge 2$. By  \cref{free:product:subgroup:free:no:cyclic} we have two cases: the group  $\Gamma$ is somorphic to $D_{\infty}$ with  $r=2$ and $\pi_{1}(P_1), \pi_{1}(P_2)$ are isomorphic to $\Z_2$, or $\Gamma$ contains a non-cyclic free subgroup.  In the first case we are done, since $\pi_1(P_1)=\Z_2$ implies that $P_1$ is homeomorphic to $\dbR P^3$.

From now on suppose also that $\Gamma$ contains a non-cyclic free subgroup. Then $\Gamma$ is not virtually abelian, and by \cref{lemma:low:dimensions} we get $\gd_{\calF_k}(\Gamma)\geq 2$. Next we consider two cases for fix a $k$: $\gd_{\calF_k}(\Gamma_i)=0$ for all $i$, or not.  In the first case, by \cref{approximation:dimention:manifold}, $\gd_{\calF_k}(\Gamma)\leq 2$, and therefore $\gd_{\calF_k}(\Gamma)= 2$, hence we are done in this case. In the second case we have that there is a $\Gamma_s$ such that $\gd_{\calF_k}(\Gamma_s)\neq 0$, and by \cref{lemma:low:dimensions}, we have $\gd_{\calF_k}(\Gamma_s)\geq 2$. Therefore by \cref{approximation:dimention:manifold},
\[\gd_{\calF_k}(\Gamma)=\max\{\gd_{\calF_k}(\Gamma_i)|1\le i\le r\},\]
and we are done in this final case.

To finish, we prove the moreover part of the statement. Assume $\Gamma=\pi_1(M)$ is virtually abelian. By \cite[Theorem~1.11.1]{matthias:stefan:henry}, $M$ is spherical, $\mathbb RP^3\# \mathbb RP^3$, $\mbs^1\times \mbs^2$, or is covered by a torus bundle. If $\Gamma$ is covered by a torus bundle, then it has a finite index subgroup isomorphic to $K=\dbZ^2\rtimes_\varphi \dbZ$, and therefore $K$ must be also virtually abelian. Since $K$ is poly-$\dbZ$ of rank 3, then $K$ must be virtually $\dbZ^3$, thus $K$ and $\Gamma$ are modelled on $\dbE^3$. Now the moreover part follows easily.
\end{proof}

%%%%%%%%%%%%%%%%%%%%%%%%%%%%%%%%%%%%%%%%%%%%%%%%%
\bibliographystyle{alpha} %harvard, unsrt, alpha
\bibliography{myblib}
\end{document}